\documentclass{amsart}
\usepackage{color}
\usepackage{amssymb,amsmath,amstext}
\usepackage{graphics}
\usepackage{amsthm}
\usepackage{tkz-graph}
\usepackage{tikz}
\usepackage{url}
\usetikzlibrary{matrix}

% MARGINS 
%\usepackage{fullpage}

% THEOREMS
\theoremstyle{plain}
\newtheorem{theorem}{Theorem}[section]
\newtheorem{proposition}[theorem]{Proposition}
\newtheorem{corollary}[theorem]{Corollary}
\newtheorem{lemma}[theorem]{Lemma}

\theoremstyle{definition}
\newtheorem{definition}[theorem]{Definition}

\newtheorem{remark}{Remark}

\newtheorem{example}{Example}

\newcommand{\bfx}{\mathbf{x}}

\newcommand{\bbZ}{\mathbb{Z}}
\newcommand{\mcA}{\mathcal{A}}
\newcommand{\mcB}{\mathcal{B}}
\newcommand{\mcC}{\mathcal{C}}
\newcommand{\mcF}{\mathcal{F}}
\newcommand{\mcH}{\mathcal{H}}
\newcommand{\mcK}{\mathcal{K}}
\newcommand{\mcL}{\mathcal{L}}
\newcommand{\mcM}{\mathcal{M}}
\newcommand{\mcP}{\mathcal{P}}
\newcommand{\mfc}{\mathfrak{c}}
\newcommand{\bbK}{\mathbb{K}}
\newcommand{\bbN}{\mathbb{N}}

\DeclareMathOperator{\sign}{sign}
\DeclareMathOperator{\pos}{pos}
\DeclareMathOperator{\Part}{Part}
\newcommand{\nth}{{}^{\textrm{th}}}

\newcommand{\eps}{\varepsilon}

%%%%%%%%%%%%%%%%%%%%%%%%%%%%%%%%%%%%

\newcounter{lpnumber}
\setcounter{lpnumber}{0}

\newcommand{\outd}{out}
\DeclareMathOperator{\im}{im}
\DeclareMathOperator{\spec}{spec}

\usepackage[pdftex,
            pdfauthor={Madhusudan Manjunath, Frank-Olaf Schreyer, and John Wilmes},
            pdftitle={Minimal Free Resolutions of the G-Parking Function Ideal and the Toppling Ideal},
            pdfsubject={Minimal free resolutions for ideals related to the chip-firing game on graphs},
            pdfkeywords={sandpiles, chip firing, toppling ideal, lattice ideal, minimal free resolution},
            pdfproducer={Latex with hyperref},
            pdfcreator={pdflatex}]{hyperref}

\begin{document}

\title[The $G$-parking Function Ideal and the Toppling Ideal]{Minimal Free Resolutions of the $G$-parking Function Ideal and the Toppling Ideal}
\author{Madhusudan Manjunath}
\address{School of Mathematics, Georgia Institute of Technology,
USA\footnote{Part of the work on this project was done while the first author
was affiliated with the Fachrichtung Mathematik, Universit\"at des Saarlandes,
Germany.}}
\email{mmanjunath3@math.gatech.edu}

\author{Frank-Olaf Schreyer}
\address{Mathematik und Informatik, Universit\"at des Saarlanes, Germany}
\email{schreyer@math.uni-sb.de}

\author{John Wilmes}
\address{Department of Mathematics, University of Chicago, USA}
\email{wilmesj@math.uchicago.edu}

\date{\today}
 
\begin{abstract}
The $G$-parking function ideal $M_G$ of a directed multigraph $G$ is a monomial
ideal which encodes some of the combinatorial information of $G$. It is an
initial ideal of the toppling ideal $I_G$, a lattice ideal intimately related
to the chip-firing game on a graph. Both ideals were first studied by Cori,
Rossin, and Salvy. A minimal free resolution for $M_G$ was given by Postnikov
and Shaprio in the case when $G$ is saturated, i.\,e., whenever there is at
least one edge $(u,v)$ for every ordered pair of distinct vertices $u$ and $v$.
They also raised the problem of an explicit description of the minimal free
resolution in the general case. In this paper, we give a minimal free
resolution of $M_G$ for any undirected multigraph $G$, as well as for a family
of related ideals including the toppling ideal $I_G$.  This settles a
conjecture of Manjunath and Sturmfels, as well as a conjecture of Perkinson and
Wilmes.
\end{abstract}

\maketitle

\section{Introduction}
Let $G$ be a directed multigraph on $n$ vertices with labels in $[n]$. (By
``multigraph'' we mean that every directed edge has a nonnegative integer
weight.)  The adjacency matrix $A_G = (a_{ij})$ of $G$ has rows and columns
indexed by vertices, with $a_{ij}$ the weight of the edge $(i,j)$ if it exists,
and zero otherwise. Let $R =\bbK[x_1,\ldots,x_n]$ be the polynomial ring in
$n$ variables over a field $\bbK$. The $G$-\textbf{parking function ideal} is
\[
M_G = \langle \bfx^{S \to \overline{S}} : S \subset [n-1]\rangle \subset R,
\]
where 
\[
{\bf x}^{S \to \overline{S}}=\prod_{i \in S} x_i^{\sum_{j \notin S} a_{ij}}.
\]
(Note that vertex $n$, which will be the ``sink vertex'' of $G$, never appears
in $S$ but always appears in $\overline{S}$ in the above definition.) The ideal
was first studied by Cori, Rossin, and Salvy \cite{CorRosSal02} in the case of
undirected graphs $G$, and subsequently in the full generality of
directed multigraphs by Postnikov and Shapiro \cite{PosSha}. They gave an
explicit minimal free resolution in the case that $G$ is saturated, i.\,e.,
when the off-diagonal entries of the adjacency matrix are nonzero. In the same
paper, Postnikov and Shapiro asked for an explicit description of the minimal
free resolution in the general case. We resolve the question in this paper for
undirected multigraphs $G$.

We also describe the minimal free resolution of a lattice ideal, called the
toppling ideal $I_G$, associated with the graph $G$. Before defining $I_G$, let
us briefly recall the definition of a lattice ideal.  Let $\mcL$ be a
sublattice of $\bbZ^n$. The lattice ideal $I_{\mcL}$ over $R$ is the ideal
generated by binomials whose exponents differ by a point in $\mcL$. More
precisely, 
\[
I_{\mcL} = \langle \bfx^{\bf u} - \bfx^{\bf v} : {\bf u} -{ \bf v} \in \mcL\rangle,
\]
where $\bfx^{\bf w} = x_1^{w_1}\cdots x_n^{w_n}$ for any ${\bf
w}=(w_1,\dots,w_n) \in \mathbb{N}^{n}$.

Lattice ideals are generalizations of toric ideals and many attempts have been
made to describe their minimal free resolutions.  While explicit descriptions
of nonminimal free resolutions are known, an explicit description of the
minimal free resolution of a lattice ideal is known only in a few cases
\cite[Chapter 9]{MilStu05}. In fact, the well-studied problem of determining
the minimal syzygies of the Veronese embedding of $\mathbb{P}^n$ \cite{Harris}
can be rephrased in terms of minimal free resolutions of lattice ideals.

Toppling ideals, the lattice ideals studied in this paper, are connected to the
Laplacian of a graph. If $G$ is a connected undirected multigraph, the Laplacian matrix of
$G$ is $\Lambda_G = \Delta - A_G$, where $\Delta$ is the diagonal matrix with
entries $\Delta_{ii} = \deg(i) = \sum_j a_{ij}$. The {\bf toppling ideal}
$I_G$ is the lattice ideal defined by the Laplacian lattice $\bbZ^n\cdot
\Lambda_G$. The $G$-parking function ideal and toppling ideal are closely related:
in particular, $M_G$ is an initial ideal of $I_G$ \cite{CorRosSal02}.  An
inhomogeneous version of the toppling ideal was studied by Cori, Rossin, and
Salvy in \cite{CorRosSal02}, and the homogeneous version was subsequently
studied by Perkinson, Perlman, and Wilmes in \cite{Perkinson}, and by Manjunath
and Sturmfels in \cite{ManStu12}. 

The family of toppling ideals appears to be quite rich. Indeed, Perkinson,
Perlman, and Wilmes~\cite{Perkinson} show that any lattice ideal defined by a
full-rank submodule of the root lattice $A_n=\{ \bfx \in \bbZ^{n+1} : \sum_{i =
1}^{n+1} x_i = 0 \}$ is the toppling ideal of some \emph{directed} multigraph.
No effective characterizations of Laplacian lattices of undirected multigraphs
are known, though certainly not every full-rank sublattice of $A_n$ is of this
form \cite{Amini,Perkinson}.  Nevertheless, even in the undirected case
examined in this paper, the ideals $I_G$ and $M_G$ represent broad, natural
categories of ideals.  They are closely tied to the graph chip-firing game, or
abelian sandpile model, first described by Dhar \cite{Dhar90}. The ideal $I_G$
is the lattice ideal corresponding to the lattice of principal divisors of $G$,
and carries information about the Riemann-Roch theory of $G$ (see
\cite{ManStu12,BakNor07}). The study of its minimal free resolution is natural
in this context.

In this paper, we will provide an explicit description of the minimal free
resolution of the ideals $M_G$ and $I_G$ for connected undirected multigraphs
$G$. We verify a conjecture made in \cite[Conjecture 29]{ManStu12} that the
Betti numbers of $I_G$ and $M_G$ coincide (Theorem~\ref{bettinumbers_theo}).
Our construction of the minimal free resolution of $I_G$ also proves
\cite[Conjecture 3.28]{Wil10}, which describes how combinatorial information of
the graph is encoded in the minimal free resolution. A weaker form of the
conjecture, stated in terms of Betti numbers, appeared in \cite[Conjecture
7.9]{Perkinson}, and also follows from Theorem~\ref{bettinumbers_theo}.
Furthermore, as we show in the final section of the paper, the minimal free
resolution of $M_G$ is supported by a CW-complex.  The resolution for $M_G$ is
a Koszul complex when $G$ is a tree, and a Scarf complex when $G$ is saturated
(see \cite{ManStu12}); thus, the $M_G$ form a natural family of ideals whose
minimal free resolutions interpolate between these two extremes.  

We now summarize the results of this paper in terms of the Betti numbers of
$M_G$ and $I_G$.  A connected $k$-partition of $G$ is a partition
$\Pi=\sqcup_{j=1}^k V_j$ of $[n]$ such that the subgraphs induced by $G$ on
each set $V_j$ is connected.  The graph $G_{\Pi}$ associated with the partition
$\Pi$ has vertices the elements of $\Pi$, and the $(V_i,V_j)$-entry of its
adjacency matrix is $\sum_{u \in V_i, v \in V_j} a_{uv}$, where $(a_{uv})$ is
the adjacency matrix for $G$.  Let ${\mathcal P}_k$ be the set of connected
$k$-partitions of $G$ of size $k$. 

\begin{theorem}[Betti Numbers of $M_G$ and $I_G$]\label{bettinumbers_theo}
Let $G$ be an undirected connected multigraph. For a connected $k$-partition
$\Pi$, let $\alpha(\Pi)$ denote the number of acyclic orientations on $G_{\Pi}$
with a unique sink at the set containing vertex $n$.  The Betti numbers of
$M_G$ and $I_G$ are
\[
\beta_k(R/M_G)=\beta_k(R/I_G)=\sum_{\Pi \in \mathcal P_{k+1}} \alpha(\Pi).
\]
\end{theorem}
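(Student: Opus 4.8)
The plan is to prove the two equalities separately: first the module equality $\beta_k(R/M_G)=\beta_k(R/I_G)$, and then the combinatorial evaluation of $\beta_k(R/M_G)$ in terms of acyclic orientations. For the first equality I would exploit that $M_G=\mathrm{in}_<(I_G)$ for a suitable term order (as established by Cori--Rossin--Salvy). Upper semicontinuity of Betti numbers under a Gr\"obner degeneration gives $\beta_k(R/I_G)\le\beta_k(R/M_G)$ for every $k$, so it suffices to produce a free resolution of $R/I_G$ whose ranks match those of a minimal resolution of $R/M_G$. I would do this by lifting: take whatever cellular resolution one constructs for $M_G$, replace each monomial entry of the differential by the corresponding binomial coming from the Laplacian lattice $\bbZ^n\cdot\Lambda_G$, and verify that the lifted complex is still exact and still minimal (homogenizing introduces no units, so its differential entries remain in the maximal ideal). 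Minimality forces the ranks to be the Betti numbers, yielding the reverse inequality and hence equality.

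For the Betti numbers of $M_G$ themselves I would pass to the \emph{lcm-lattice} $L$ of the monomial ideal $M_G$ and use the standard formula
\[
\beta_{k,\mathbf a}(R/M_G)=\dim_{\bbK}\widetilde H_{k-2}\bigl((\hat 0,\mathbf a);\bbK\bigr),
\]
where $(\hat 0,\mathbf a)$ is the open interval below $\mathbf a$ in $L$ and homology is that of its order complex. The first combinatorial task is to describe $L$: the generators $\bfx^{S\to\overline S}$ are its join-irreducibles, and a join of several generators should be read off as a ``merging'' of the corresponding cut-sets, so that each element $\mathbf a\in L$ determines a connected partition $\Pi(\mathbf a)$ of $[n]$, with the number of parts controlling the homological degree in which homology can appear (a $(k+1)$-partition feeding $\beta_k$). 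I would make this correspondence precise and show it is compatible with the rank stratification of $L$.

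The crux is the homology computation. For a multidegree $\mathbf a$ with $\Pi(\mathbf a)$ a connected $(k+1)$-partition, I would argue that the order complex of $(\hat 0,\mathbf a)$ is homotopy equivalent to the order complex of the proper part of the \emph{bond lattice} of the contracted graph $G_{\Pi(\mathbf a)}$ (equivalently, the lattice of connected partitions refining $\Pi(\mathbf a)$). By the theory of graphic arrangements, this order complex has reduced homology concentrated in the top degree $k-2$, and the theorem of Greene--Zaslavsky (in Stanley's interpretation) identifies its dimension---the absolute M\"obius number $|\mu(\hat 0,\hat 1)|$ of the bond lattice---with the number of acyclic orientations of $G_{\Pi(\mathbf a)}$ having a unique fixed sink. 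Fixing the sink at the part containing vertex $n$ (which never enters any $S$) gives exactly $\alpha(\Pi(\mathbf a))$. Summing $\beta_{k,\mathbf a}$ over all $\mathbf a\in L$ with $\Pi(\mathbf a)\in\mcP_{k+1}$ then collapses to $\sum_{\Pi\in\mcP_{k+1}}\alpha(\Pi)$.

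The main obstacle is precisely this homology computation in the general, non-saturated case. When $G$ is saturated the lcm-lattice is geometric and the resolution is Scarf, while for a tree it is Boolean and the resolution is Koszul; for arbitrary $G$ the lattice $L$ is neither, so I cannot simply quote a single known complex. The delicate points are (i) showing that the homology of each open interval is concentrated in a single homological degree---so that no cancellation across degrees can occur and the semicontinuity inequality above is tight---and (ii) matching the combinatorics of $L$ with the bond lattice of $G_{\Pi}$ uniformly in $\mathbf a$, including a careful treatment of when distinct partitions produce the same multidegree. Establishing the single-degree concentration, likely via a shellability or discrete Morse argument on the order complexes, is where I expect the real work to lie.
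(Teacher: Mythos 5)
Your second step---the computation of $\beta_k(R/M_G)$ via the lcm-lattice---contains a genuine error at its crux. The claimed homotopy equivalence between the open interval $(\hat 0,\mathbf a)$ and the proper part of the bond lattice of $G_{\Pi(\mathbf a)}$ is false, because it conflates a \emph{total} count with a \emph{per-multidegree} count. Take $G=K_3$ with sink $n=3$, so $M_G=\langle x_1^2,\,x_1x_2,\,x_2^2\rangle$ and $\beta_2(R/M_G)=2=\alpha(\Pi)$ for the unique connected $3$-partition $\Pi$. The second syzygies live in \emph{two} multidegrees, $\mathbf a=x_1^2x_2$ and $\mathbf a=x_1x_2^2$; each open interval $(\hat 0,\mathbf a)$ is a two-point antichain with $\dim_\bbK\widetilde H_0=1$, whereas the proper part of the bond lattice of $K_3$ is a three-point antichain with $\dim_\bbK\widetilde H_0=2=|\mu(\hat 0,\hat 1)|$. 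Your identification would thus report $\beta_{2,\mathbf a}=2$ at each of these degrees (and again at the top degree $x_1^2x_2^2$, where the true interval is a contractible path contributing $0$), giving $\beta_2\ge 4$ instead of $2$. The Greene--Zaslavsky number $\alpha(\Pi)$ is the sum over all multidegrees realizing $\Pi$: in the paper, each $n$-acyclic orientation $\mcC$ of $G_\Pi$ contributes its own multidegree $D(\mcC)$ with a one-dimensional contribution. The per-interval statement you would actually need---homology concentrated in degree $k-2$ with dimension equal to the number of sink-rooted orientations whose divisor is exactly $\mathbf a$---is essentially as hard as the paper's exactness proof for $\mcF_0(G)$, which avoids lcm-lattice topology altogether: it localizes at the coordinate primes $P_j=\langle x_i : i\ne j\rangle$, tensors with $\kappa(P_j)$, and splits the resulting complex of vector spaces into Koszul-type complexes attached to maximal ``$j$-stars.''

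Your first step is also underspecified and, as described, incorrect in detail. Entrywise replacement of each monomial in the differentials of a resolution of $M_G$ by ``the corresponding binomial'' does not produce the paper's $\mcF_1(G)$, nor in general a complex: compare $\delta_{0,2}$ with $\delta_{1,2}$ in the kite-graph example, where $\delta_{1,2}$ acquires genuinely \emph{new monomial} entries (e.g.\ $-x_4$ in positions where $\delta_{0,2}$ vanishes) coming from contractible edges appearing only in non-$n$-acyclic representatives of a chip-firing equivalence class, while binomial entries occur precisely at bridges of partition graphs. Moreover, ``verify that the lifted complex is still exact'' is the entire difficulty, and the paper's logic runs in the opposite direction from a direct verification: it homogenizes $\mcF_1(G)$ with an integral weight $\lambda$ solving $\Lambda_G\lambda = y$ (the potential-theoretic vector $b_q$ of Baker--Shokrieh), obtaining a complex $\mcF_t(G)$ over $R[t]$ whose fiber at $t=0$ is exactly $\mcF_0(G)$ (this uses that $\lambda\cdot D(\mcC)$ is uniquely maximized at the $n$-acyclic representative); exactness of $\mcF_0(G)$ plus Nakayama's lemma then yields exactness of $\mcF_t(G)$, and flatness ($\bbK[t]$-freeness of the homogenized quotients) yields exactness of the fiber $\mcF_1(G)$. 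Your semicontinuity inequality is fine as far as it goes, but it becomes an equality only through this machinery; it cannot be short-circuited by an entrywise substitution.
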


A few remarks on Theorem \ref{bettinumbers_theo} are in place. The numbers
$\beta_j(R/M_G)$ and $\beta_j(R/I_G)$ do not depend on the choice of the sink
vertex $n$. To obtain a bijection between acyclic orientations of a graph with
unique sink $i$ and acyclic orientations of the same graph with unique sink
$j$, simply reverse the orientations of all edges along paths from $i$ to $j$.
Natural bijections exist between the set of acyclic orientations with unique
sink at a fixed vertex and the set of minimal recurrent configurations of a
graph \cite{BCT}.  Hence, \cite[Conjecture 7.9]{Perkinson} follows from Theorem
\ref{bettinumbers_theo}.

Let us close this section by providing an overview of the proof of Theorem
\ref{bettinumbers_theo}.

\subsubsection*{Overview of proof of Theorem \ref{bettinumbers_theo}.}
We construct complexes $\mcF_1(G)$  and $\mcF_0(G)$ that
are candidates for minimal free resolutions of $I_G$ and $M_G$ respectively
(Sections~\ref{sec:I_G} and~\ref{sec:M_G}). The ranks of the free
modules at each homological degree match the description of the Betti numbers
in Theorem~\ref{bettinumbers_theo}. We are then left with proving the exactness
of $\mcF_0(G)$ and $\mcF_1(G)$. We prove the exactness of $\mcF_0(G)$ in
Section \ref{F0exact_sect}. We exploit the torus action on $\mcF_0(G)$ to
reduce the exactness of $\mcF_0(G)$ to the exactness of certain complexes of
vector spaces (Subsection \ref{subsec:reduction}) and prove the exactness of
these complexes of vector spaces by decomposing them as a direct sum of certain
complexes derived from Koszul complexes (Subsection \ref{subsec:max_star}). The
proof of the exactness of $\mcF_1(G)$ uses the exactness of $\mcF_0(G)$
(Section~\ref{F1exact_sect}). More precisely, we use Gr\"obner degeneration to
derive a family of complexes $\mcF_t(G)$ parametrized by $\spec(\bbK[t])$
such that the fiber at $(t)$ is $\mcF_0(G)$ and the fibers at $(t-t_0)$ for $t_0
\neq 0$ are all isomorphic to $\mcF_1(G)$. The integral weight function realizing
the Gr\"obner degeneration is intimately connected to potential theory on
graphs and one such choice is the function $b_q(D)$ studied in \cite{BakSho11}.
With this information at hand, we use well-known properties of flat
families to deduce that  $\mcF_1(G)$ is exact. \qed

\subsubsection*{Related results.}
Analogous results were obtained simultaneously and independently
by Mohammadi and Shokrieh in~\cite{MohSho12}, using different techniques. They
give the Betti numbers for $M_G$ and $I_G$ and construct minimal free
resolutions by using Schreyer's algorithm to explicitly compute the syzygies.
As well, Horia Mania~\cite{Man12} has given an alternate proof that $\beta_1(R/I_G) =
|\mcP_2|$ by computing the connected components of certain simplicial complexes
associated with $I_G$.

\subsubsection*{Acknowledgements.} We would like to express our gratitude to
Bernd Sturmfels for beginning the collaboration that led to this paper by
kindly hosting the first and third authors at Berkeley. We thank Farbod
Shokrieh for stimulating discussions on potential theory and Gr\"obner bases.
The first author thanks Matt Baker for the support and encouragement in the
course of this work.  The third author was supported in part by NSF Grant
No.~DGE 1144082.

All examples were computed using the computer algebra systems Sage~\cite{sage}
and Macaulay 2~\cite{m2}. Those readers interested in computing their own
examples can find a relevant script on the homepage of the third
author,~\url{http://johnwilmes.name/sand}.

\section{Preliminaries}\label{prelims}

For the rest of this paper, $G$ will denote an undirected connected multigraph with
vertex set $[n]$. In other contexts, it is usual to describe multigraphs as
having ``multiple edges'' joining adjacent vertices, but for our purposes it is
more natural to think of a single edge with nonnegative integer weight. Since
$G$ is undirected, the adjacency matrix $A_G = (a_{ij})$ is symmetric.

We define minimal free resolutions of $M_G$ and $I_G$ in terms of acyclic
partitions.

\begin{definition}{\rm({\bf Acyclic Partition})} 
An {\bf acyclic $k$-partition} $\mcC$ is a pair
$(\Pi,\mathcal{A})$ where $\Pi$ is a connected $k$-partition and
$\mathcal{A}$ is an acyclic orientation on $G_{\Pi}$. We think of $\mcC$ as a
directed graph on $\Pi$. Given a vertex $i$ of $G$, the graph $\mcC$ is an {\bf
$i$-acyclic $k$-partition} if it has a unique sink at the element  of $\Pi$
containing $i$.  If $\mcC$ is an acyclic $k$-partition, we denote by
$\Pi(\mcC)$ the corresponding connected $k$-partition of $G$.
\end{definition}

Acyclic partitions are intimately related to the chip-firing game on a graph.
This game consists of an initial configuration of an integer number $D_j$ of
chips at every vertex $j$ of $G$. Such a configuration is called a
\emph{divisor} (cf.~\cite{BakNor07}), and is viewed as an element $D$ of the free abelian group $\bbZ[V]$,
\[
D = \sum_{j \in V}D_j\cdot j
\]
where $V = [n]$ is the vertex set of $G$. The game is played by \emph{firing} a
vertex $j$, i.\,e., replacing the divisor $D$ with $D - e_j\Lambda_G$, where
$e_j$ is the $j\nth$ standard basis vector.  We say $D$ is \emph{linearly
equivalent} to a divisor $E$ if $E$ can be reached from $D$ by a sequence of
such firings. Thus, viewed as elements of $\bbZ^n$, we have $D$ and $E$
linearly equivalent if and only if they are equivalent modulo the 
Laplacian lattice. For a more thorough introduction to the chip firing
game, the reader is referred to \cite{HLMPPW}.

Let $\mcC$ be an acyclic $k$-partition of $G$, and fix $u \in V(G)$. Let $U \in
\Pi(\mcC)$ be such that $u \in U$. Define $\outd_{\mcC}(u)$ as the number of edges in $G$ between $u$ and
vertices appearing in sets which are out-neighbors of $U$ in $\mcC$, i.\,e.,
\[
\outd_{\mcC}(u) = \sum_{(U,W)\in\mcC}\sum_{w \in W}a_{uw}.
\]
Given an
acyclic $k$-partition $\mcC$ of $G$, we define a divisor $D(\mcC)$ on $G$ by 
\[
D(\mcC) = \sum_{v \in V(G)}\outd_{\mcC}(v)\cdot v.
\]

A $G$-parking function (relative to $n$) is a divisor $D$ on $G$ with $D_n
= -1$ such that if $A \subset V\setminus\{n\}$ and $E$ is the divisor obtained
from $D$ by firing every vertex in $A$, then there is some vertex $i \in A$
such that $E_i < 0$. The divisors $D(\mcC) - 1\cdot n$ for $\mcC$ an
$n$-acyclic $n$-partition are exactly the maximal $G$-parking functions
\cite{BCT}.  If $D$ is a divisor on $G$ with $D_n = -1$, then $D$ is a
$G$-parking function if and only if the monomial $\prod_{i < n}x_i^{D_i}$ is
not in the $G$-parking function ideal $M_G$ \cite{PosSha}.

\begin{definition}{\rm({\bf Chip Firing Equivalence})}
Let $\mcC_1$ and $\mcC_2$ be acyclic $k$-partitions with $\Pi(\mcC_1) =
\Pi(\mcC_2)$, and define the projection $p : \bbZ[V]\to\bbZ[\Pi(\mcC_1)]$ from
divisors of $G$ to divisors of $G_{\Pi(\mcC_1)}$ by $p(D)_U = \sum_{j \in
U}D_j$. Then $\mcC_1$ and $\mcC_2$ are chip firing equivalent, equivalent,
denoted by $\mathcal{C}_1 \sim \mathcal{C}_2$, if the divisors $p(D(\mcC_1))$
and $p(D(\mcC_2))$ on $G_{\Pi(\mcC_1)}$ are linearly equivalent.
\end{definition}

We write $[\mcC]$ for the chip firing equivalence class containing the acyclic
$k$-partition $\mcC$. Each chip firing equivalence of acyclic $n$-partitions
contains a unique $n$-acyclic $k$-partition, as a result of the well-known
equivalence between $G$-parking functions and acyclic orientations
(cf.~\cite{BCT}). The following lemma is an immediate generalization.

\begin{lemma}\label{unique n-acyclic}
Every chip firing equivalence class of acyclic $k$-partitions contains a unique
$n$-acyclic $k$-partition.
\end{lemma}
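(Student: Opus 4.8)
The plan is to fix a connected $k$-partition $\Pi$ and reduce the statement to the classical correspondence between acyclic orientations with a prescribed unique sink and chip-firing classes, applied to the quotient multigraph $G_\Pi$. Since $\mcC_1 \sim \mcC_2$ is only defined when $\Pi(\mcC_1) = \Pi(\mcC_2)$, each equivalence class lives entirely over a single partition $\Pi$, so I may regard the data of $\mcC = (\Pi,\mcA)$ as simply an acyclic orientation $\mcA$ of the connected multigraph $H := G_\Pi$ (connected because $G$ is and the blocks of $\Pi$ are), with distinguished vertex $s$ the block containing $n$. The first step is to unwind $p(D(\mcC))$. Summing $\outd_{\mcC}$ over a block $U$,
\[
p(D(\mcC))_U = \sum_{u \in U} \outd_{\mcC}(u) = \sum_{(U,W) \in \mcC} \sum_{u \in U,\, w \in W} a_{uw},
\]
which is exactly the out-degree of $U$ in the oriented multigraph $(H,\mcA)$ by the definition of the adjacency matrix of $G_\Pi$. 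Hence the chip-firing class of $\mcC$ is precisely the linear-equivalence class, modulo the Laplacian lattice of $H$, of the out-degree divisor $D_\mcA$ of $(H,\mcA)$, and an $n$-acyclic $k$-partition is exactly an acyclic orientation of $H$ with unique sink $s$. The lemma thereby becomes: each linear-equivalence class of out-degree divisors of acyclic orientations of $H$ contains exactly one orientation whose unique sink is $s$.

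For existence I would navigate within a fixed class by edge reversals. If $v$ is a source of an acyclic orientation of $H$, reversing all edges at $v$ keeps the orientation acyclic (a vertex with no out-edges cannot lie on a directed cycle, and no other edges change) and alters the out-degree divisor by exactly $-e_v\Lambda_H$, i.e. by firing $v$; symmetrically, reversing a sink $w$ adds $e_w\Lambda_H$. Both moves preserve the linear-equivalence class. Passing to the $s$-reduced representative of $D_\mcA$ in the sense of \cite{BakNor07} (equivalently, running Dhar's burning algorithm on $H$ with sink $s$) produces, within the class of $D_\mcA$, the out-degree divisor of an acyclic orientation with unique sink $s$: indeed, as recorded above, the maximal $G$-parking functions relative to $s$ are precisely the divisors $D(\mcC) - 1\cdot s$ for such orientations, so exhibiting the reduced representative exhibits the desired orientation.

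For uniqueness, suppose two acyclic orientations of $H$, each with unique sink $s$, had linearly equivalent out-degree divisors. Then their associated maximal $G$-parking functions $D(\mcC) - 1\cdot s$ would be linearly equivalent divisors on $H$; but distinct $G$-parking functions are never linearly equivalent, each being the unique $s$-reduced divisor in its class. Hence the two orientations coincide. This uniqueness is the crux: the existence half is essentially a termination argument for a reversal/burning process, whereas ruling out two distinct unique-sink orientations in one class is exactly the substance of the classical $G$-parking-function correspondence. Given that the lemma is advertised as an immediate generalization, the cleanest route is to invoke this correspondence \cite{BCT} verbatim for the multigraph $H = G_\Pi$, the partition-level bookkeeping having been stripped away by the reduction in the first paragraph.
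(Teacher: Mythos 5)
Your proof is correct and follows essentially the same route as the paper's: you reduce to the quotient multigraph $G_{\Pi}$ via the identity $p(D(\mcC)) = D(\overline{\mcC})$ and then invoke the classical correspondence between acyclic orientations with unique sink and chip-firing classes from \cite{BCT}, which is precisely the paper's two-line argument. The only difference is that you unpack the cited correspondence (existence via source/sink reversals and reduced divisors, uniqueness via uniqueness of the $s$-reduced representative), which the paper leaves implicit.
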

\begin{proof}
For $\mcC$ an acyclic $k$-partition of $G$, write $\overline{\mcC}$ for the
corresponding acyclic $k$-partition of $G_{\Pi(\mcC)}$. Note that $p(D(\mcC)) =
D(\overline{\mcC})$. Thus, the lemma is immediate from the known result for
acyclic $n$-partitions.
\end{proof}

\begin{remark}\label{rem:edge reversal}
If $\mcC$ is an acyclic $k$-partition with a source at $U \in \Pi(\mcC)$, then
by firing every vertex in $U$ from $D(\mcC)$ we obtain the divisor $D(\mcC')$,
where $\mcC'$ is the acyclic $k$-partition with $\Pi(\mcC') = \Pi(\mcC)$ given
by reversing the orientation of every edge incident on $U$ in $\mcC$, and
preserving all other orientations (c.\,f.~\cite{Goles}). Similarly, we may turn a
sink $U$ into a source by firing all vertices not in $U$. Thus, if $\mcC$ and
$\mcC'$ are acyclic $k$-partitions with $\Pi(\mcC)=\Pi(\mcC')$, then $\mcC \sim
\mcC'$ if $\mcC'$ is obtained from $\mcC$ by iteratively replacing sources with
sinks, or sinks with sources. The converse also holds, by the proof of
Lemma~\ref{lem:projection to partition} below. \qed
\end{remark}

\begin{lemma}\label{lem:projection to partition}
Let $\mcC$ and $\mcC'$ be acyclic $k$-partitions with $\Pi(\mcC) = \Pi(\mcC')$.
Then if $\mcC\sim \mcC'$, the divisors $D(\mcC)$ and $D(\mcC')$ are linearly
equivalent.
\end{lemma}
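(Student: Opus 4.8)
The plan is to move the statement to the contracted graph $G_{\Pi}$, where $\Pi=\Pi(\mcC)=\Pi(\mcC')$, prove a reversal statement there, and then pull it back to $G$ by means of the firing description in Remark~\ref{rem:edge reversal}. Write $\overline{\mcC}$ and $\overline{\mcC'}$ for the induced acyclic orientations of $G_{\Pi}$. Recalling from the proof of Lemma~\ref{unique n-acyclic} that $p(D(\mcC))=D(\overline{\mcC})$, the hypothesis $\mcC\sim\mcC'$ asserts exactly that $D(\overline{\mcC})$ and $D(\overline{\mcC'})$ are linearly equivalent on $G_{\Pi}$. The crux of the argument is then to upgrade this linear equivalence to the combinatorial statement that $\overline{\mcC'}$ can be obtained from $\overline{\mcC}$ by a finite sequence of source-to-sink and sink-to-source reversals. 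Granting this, the lemma follows immediately: viewing each intermediate orientation as an acyclic $k$-partition of $G$ with the same underlying partition $\Pi$, Remark~\ref{rem:edge reversal} identifies each reversal at a block $U$ with firing all vertices of $U$ (when $U$ is a source) or all vertices outside $U$ (when $U$ is a sink), and either move alters the divisor on $G$ by an element of the Laplacian lattice of $G$; chaining these moves along a reversal path from $\mcC$ to $\mcC'$ exhibits $D(\mcC)$ and $D(\mcC')$ as linearly equivalent.

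To prove the reversal statement I would show that the reversal relation and the chip-firing relation $\sim$ cut out the same classes of acyclic orientations of $G_{\Pi}$. One containment is Remark~\ref{rem:edge reversal} itself: a reversal leaves the divisor's chip-firing class unchanged, so reversal-equivalence refines $\sim$. For the reverse containment I would use Lemma~\ref{unique n-acyclic}, which supplies in each $\sim$-class a unique orientation whose single sink is the block $q$ containing $n$. It therefore suffices to prove that every acyclic orientation of $G_{\Pi}$ can be transformed, by reversals, into one with unique sink $q$: two orientations in a common $\sim$-class then reduce by reversals to two $n$-acyclic orientations that are still $\sim$-equivalent, hence equal by Lemma~\ref{unique n-acyclic}, so the originals are joined through their shared reduction. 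As a by-product this also establishes the converse asserted in Remark~\ref{rem:edge reversal}.

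The main obstacle is precisely this reduction to unique-sink form, which is a termination statement. The natural procedure is to repeatedly reverse a sink other than $q$; each such reversal preserves acyclicity and can only enlarge the set $R$ of blocks from which $q$ is reachable by a directed path, and $q$ is the unique sink exactly when $R=\Pi$. The subtlety is that an individual reversal need not enlarge $R$, so $|R|$ is not by itself a strict monovariant; I would therefore control the process with a lexicographic potential whose primary term is the number of blocks that cannot reach $q$ and whose secondary term records progress within the complement of $R$. Here connectedness of $G_{\Pi}$ is the essential input: every edge between $R$ and its complement is forced to point out of $R$, so the complement cannot be reversed around forever in isolation, and eventually a sink adjacent to $R$ must be reversed, strictly enlarging $R$. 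Making this secondary monovariant precise --- and thereby guaranteeing termination --- is the one genuinely delicate point; everything else is the bookkeeping of the two reductions above. (Alternatively, one may invoke the classical equivalence between $G$-parking functions and acyclic orientations, applied to $G_{\Pi}$, to obtain the reversal statement directly, and then lift as before.)
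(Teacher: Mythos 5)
Your proposal follows the paper's proof essentially verbatim: the paper likewise reduces both $\mcC$ and $\mcC'$ to $n$-acyclic $k$-partitions by iterated sink/source reversals (Remark~\ref{rem:edge reversal}), uses the fact that each reversal is realized by vertex firings on $G$ and hence preserves the linear equivalence class of $D(\cdot)$, and concludes from the uniqueness statement of Lemma~\ref{unique n-acyclic}. The termination step you flag as delicate is simply asserted in the paper (via Remark~\ref{rem:edge reversal}, resting on the classical theory, cf.~\cite{Goles,BCT}); it also admits a short direct proof---reversals at the two endpoints of an edge must alternate, so if the block $q$ containing $n$ is never reversed, each block $U$ is reversed at most $\mathrm{dist}(U,q)$ times, forcing the process of reversing non-$q$ sinks to halt at the unique-sink orientation---so your fallback of invoking the parking-function/acyclic-orientation equivalence on $G_{\Pi}$ matches the paper's own level of detail.
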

\begin{proof}
Suppose $p(D(\mcC))$ and $p(D(\mcC'))$ are linearly equivalent. By iteratively
replacing sinks with sources in $\mcC$, as in Remark~\ref{rem:edge reversal},
we obtain an $n$-acyclic $k$-partition, and similarly for $\mcC'$. Thus,
without loss of generality, we may assume $\mcC$ and $\mcC'$ are $n$-acyclic.
But then $\mcC = \mcC'$ by Lemma~\ref{unique n-acyclic}.
\end{proof}

We remark that the converse of Lemma~\ref{lem:projection to partition} also
holds, though we shall not need it.

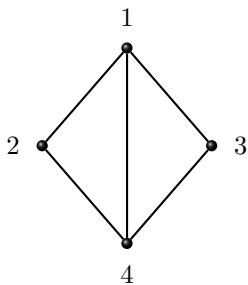
\begin{figure}[ht] 
\begin{center}
\begin{tikzpicture}[scale=1.3]
\SetVertexMath
\GraphInit[vstyle=Art]
\SetUpVertex[MinSize=3pt]
\SetVertexLabel
\tikzset{VertexStyle/.style = {%
shape = circle,
shading = ball,
ball color = black,
inner sep = 1.5pt
}}
\SetUpEdge[color=black]
\Vertex[LabelOut,Lpos=90, Ldist=.1cm,x=4.7,y=1]{1}
\Vertex[LabelOut,Lpos=180, Ldist=.1cm,x=3.833,y=0]{2}
\Vertex[LabelOut,Lpos=0, Ldist=.1cm,x=5.566,y=0]{3}
\Vertex[LabelOut,Lpos=270, Ldist=.1cm,x=4.7,y=-1]{4}
\Edge[](1)(2)
\Edge[](1)(3)
\Edge[](1)(4)
\Edge[](2)(4)
\Edge[](3)(4)
\end{tikzpicture}
\end{center}
\caption{\label{fig:kite graph} The ``kite graph'' on four vertices.}
\end{figure}

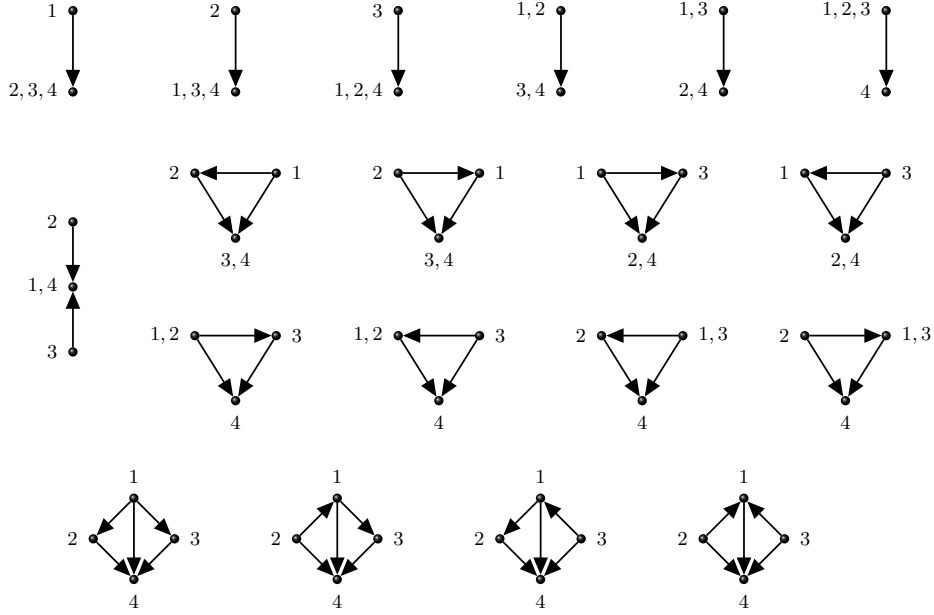
\begin{figure}[ht] 
\begin{center}
\resizebox{\linewidth}{!}{
\begin{tikzpicture}[scale=1.3]
\SetVertexMath
\GraphInit[vstyle=Art]
\SetUpVertex[MinSize=3pt]
\SetVertexLabel
\tikzset{VertexStyle/.style = {%
shape = circle,
shading = ball,
ball color = black,
inner sep = 1.5pt
}}
\SetUpEdge[color=black]
\tikzstyle{every node}=[font=\small]

%%4-acyclic 2-partitions
\Vertex[LabelOut,Lpos=180, Ldist=.05cm,x=0,y=9,L={1}]{a}
\Vertex[LabelOut,Lpos=180, Ldist=.05cm,x=0,y=8,L={2,3,4}]{b}
\Edge[style={-triangle 45}](a)(b)

\Vertex[LabelOut,Lpos=180, Ldist=.05cm,x=2,y=9,L={2}]{a}
\Vertex[LabelOut,Lpos=180, Ldist=.05cm,x=2,y=8,L={1,3,4}]{b}
\Edge[style={-triangle 45}](a)(b)

\Vertex[LabelOut,Lpos=180, Ldist=.05cm,x=4,y=9,L={3}]{a}
\Vertex[LabelOut,Lpos=180, Ldist=.05cm,x=4,y=8,L={1,2,4}]{b}
\Edge[style={-triangle 45}](a)(b)

\Vertex[LabelOut,Lpos=180, Ldist=.05cm,x=6,y=9,L={1,2}]{a}
\Vertex[LabelOut,Lpos=180, Ldist=.05cm,x=6,y=8,L={3,4}]{b}
\Edge[style={-triangle 45}](a)(b)

\Vertex[LabelOut,Lpos=180, Ldist=.05cm,x=8,y=9,L={1,3}]{a}
\Vertex[LabelOut,Lpos=180, Ldist=.05cm,x=8,y=8,L={2,4}]{b}
\Edge[style={-triangle 45}](a)(b)

\Vertex[LabelOut,Lpos=180, Ldist=.05cm,x=10,y=9,L={1,2,3}]{a}
\Vertex[LabelOut,Lpos=180, Ldist=.05cm,x=10,y=8,L={4}]{b}
\Edge[style={-triangle 45}](a)(b)

%%4-acyclic 3-partitions
\Vertex[LabelOut,Lpos=180, Ldist=.05cm,x=0,y=6.4]{2}
\Vertex[LabelOut,Lpos=180, Ldist=.05cm,x=0,y=5.6,L={1,4}]{14}
\Vertex[LabelOut,Lpos=180, Ldist=.05cm,x=0,y=4.8]{3}
\Edge[style={-triangle 45}](2)(14)
\Edge[style={-triangle 45}](3)(14)

\Vertex[LabelOut,Lpos=180, Ldist=.05cm,x=1.5,y=7]{2}
\Vertex[LabelOut,Lpos=0, Ldist=.05cm,x=2.5,y=7]{1}
\Vertex[LabelOut,Lpos=270, Ldist=.05cm,x=2,y=6.2,L={3,4}]{34}
\Edge[style={-triangle 45}](1)(2)
\Edge[style={-triangle 45}](1)(34)
\Edge[style={-triangle 45}](2)(34)

\Vertex[LabelOut,Lpos=180, Ldist=.05cm,x=4,y=7]{2}
\Vertex[LabelOut,Lpos=0, Ldist=.05cm,x=5,y=7]{1}
\Vertex[LabelOut,Lpos=270, Ldist=.05cm,x=4.5,y=6.2,L={3,4}]{34}
\Edge[style={-triangle 45}](2)(1)
\Edge[style={-triangle 45}](2)(34)
\Edge[style={-triangle 45}](1)(34)

\Vertex[LabelOut,Lpos=180, Ldist=.05cm,x=6.5,y=7]{1}
\Vertex[LabelOut,Lpos=0, Ldist=.05cm,x=7.5,y=7,L={3}]{3}
\Vertex[LabelOut,Lpos=270, Ldist=.05cm,x=7,y=6.2,L={2,4}]{24}
\Edge[style={-triangle 45}](1)(3)
\Edge[style={-triangle 45}](1)(24)
\Edge[style={-triangle 45}](3)(24)

\Vertex[LabelOut,Lpos=180, Ldist=.05cm,x=9,y=7]{1}
\Vertex[LabelOut,Lpos=0, Ldist=.05cm,x=10,y=7,L={3}]{3}
\Vertex[LabelOut,Lpos=270, Ldist=.05cm,x=9.5,y=6.2,L={2,4}]{24}
\Edge[style={-triangle 45}](3)(1)
\Edge[style={-triangle 45}](3)(24)
\Edge[style={-triangle 45}](1)(24)

%%4-acyclic 3-partitions, continued

\Vertex[LabelOut,Lpos=180, Ldist=.05cm,x=1.5,y=5,L={1,2}]{12}
\Vertex[LabelOut,Lpos=0, Ldist=.05cm,x=2.5,y=5,L={3}]{3}
\Vertex[LabelOut,Lpos=270, Ldist=.05cm,x=2,y=4.2]{4}
\Edge[style={-triangle 45}](12)(3)
\Edge[style={-triangle 45}](12)(4)
\Edge[style={-triangle 45}](3)(4)

\Vertex[LabelOut,Lpos=180, Ldist=.05cm,x=4,y=5,L={1,2}]{12}
\Vertex[LabelOut,Lpos=0, Ldist=.05cm,x=5,y=5]{3}
\Vertex[LabelOut,Lpos=270, Ldist=.05cm,x=4.5,y=4.2,L={4}]{4}
\Edge[style={-triangle 45}](3)(12)
\Edge[style={-triangle 45}](3)(4)
\Edge[style={-triangle 45}](12)(4)

\Vertex[LabelOut,Lpos=180, Ldist=.05cm,x=6.5,y=5]{2}
\Vertex[LabelOut,Lpos=0, Ldist=.05cm,x=7.5,y=5,L={1,3}]{13}
\Vertex[LabelOut,Lpos=270, Ldist=.05cm,x=7,y=4.2]{4}
\Edge[style={-triangle 45}](13)(2)
\Edge[style={-triangle 45}](2)(4)
\Edge[style={-triangle 45}](13)(4)

\Vertex[LabelOut,Lpos=180, Ldist=.05cm,x=9,y=5]{2}
\Vertex[LabelOut,Lpos=0, Ldist=.05cm,x=10,y=5,L={1,3}]{13}
\Vertex[LabelOut,Lpos=270, Ldist=.05cm,x=9.5,y=4.2]{4}
\Edge[style={-triangle 45}](2)(13)
\Edge[style={-triangle 45}](2)(4)
\Edge[style={-triangle 45}](13)(4)

%%4-acyclic 4-partitions
\Vertex[LabelOut,Lpos=90, Ldist=.05cm,x=0.75,y=3]{1}
\Vertex[LabelOut,Lpos=180, Ldist=.05cm,x=0.25,y=2.5]{2}
\Vertex[LabelOut,Lpos=0, Ldist=.05cm,x=1.25,y=2.5]{3}
\Vertex[LabelOut,Lpos=270, Ldist=.05cm,x=0.75,y=2]{4}
\Edge[style={-triangle 45}](1)(2)
\Edge[style={-triangle 45}](1)(3)
\Edge[style={-triangle 45}](1)(4)
\Edge[style={-triangle 45}](2)(4)
\Edge[style={-triangle 45}](3)(4)

\Vertex[LabelOut,Lpos=90, Ldist=.05cm,x=3.25,y=3]{1}
\Vertex[LabelOut,Lpos=180, Ldist=.05cm,x=2.75,y=2.5]{2}
\Vertex[LabelOut,Lpos=0, Ldist=.05cm,x=3.75,y=2.5]{3}
\Vertex[LabelOut,Lpos=270, Ldist=.05cm,x=3.25,y=2]{4}
\Edge[style={-triangle 45}](2)(1)
\Edge[style={-triangle 45}](1)(3)
\Edge[style={-triangle 45}](1)(4)
\Edge[style={-triangle 45}](2)(4)
\Edge[style={-triangle 45}](3)(4)

\Vertex[LabelOut,Lpos=90, Ldist=.05cm,x=5.75,y=3]{1}
\Vertex[LabelOut,Lpos=180, Ldist=.05cm,x=5.25,y=2.5]{2}
\Vertex[LabelOut,Lpos=0, Ldist=.05cm,x=6.25,y=2.5]{3}
\Vertex[LabelOut,Lpos=270, Ldist=.05cm,x=5.75,y=2]{4}
\Edge[style={-triangle 45}](1)(2)
\Edge[style={-triangle 45}](3)(1)
\Edge[style={-triangle 45}](1)(4)
\Edge[style={-triangle 45}](2)(4)
\Edge[style={-triangle 45}](3)(4)

\Vertex[LabelOut,Lpos=90, Ldist=.05cm,x=8.25,y=3]{1}
\Vertex[LabelOut,Lpos=180, Ldist=.05cm,x=7.75,y=2.5]{2}
\Vertex[LabelOut,Lpos=0, Ldist=.05cm,x=8.75,y=2.5]{3}
\Vertex[LabelOut,Lpos=270, Ldist=.05cm,x=8.25,y=2]{4}
\Edge[style={-triangle 45}](2)(1)
\Edge[style={-triangle 45}](3)(1)
\Edge[style={-triangle 45}](1)(4)
\Edge[style={-triangle 45}](2)(4)
\Edge[style={-triangle 45}](3)(4)

\end{tikzpicture}
}
\end{center}
\caption{\label{fig:kite partitions} The $4$-acyclic $k$-partitions of the
``kite graph'' for $k=2,3,4$.}
\end{figure}

\begin{example}\label{ex:kite graph}
Let $G$ be the ``kite graph'' on four vertices depicted in Figure~\ref{fig:kite
graph}. Then $G$ has a unique acyclic 1-partition, six chip-firing equivalence
classes of acyclic 2-partitions, nine classes of acyclic 3-partitions, and four
classes of acyclic 4-partitions. The 4-acyclic representatives of each of these
is depicted in Figure~\ref{fig:kite partitions}.
\end{example}

\subsection{Edge Contraction}

In order to define the differentials of our free resolutions of $M_G$ and
$I_G$, we will use the operation of edge contraction in acyclic partitions.
For a directed edge $e = (A,B)$ of $\mcC$, we will denote by $e^- = A$ the tail
of $e$, and by $e^+ = B$ the head of $e$. 

\begin{definition}{\rm ({\bf Contractible Edge})} \label{def:contractible}
A (directed) edge $e$ of an acyclic $k$-partition $\mcC$ is {\bf contractible}
if the directed graph $\mcC/e$ given by contracting $e$ (i.\,e., merging the
vertices $e^+$ and $e^-$) is acyclic. The edge $e$ is a contractible edge of
the chip firing equivalence class $\mfc$ if there exists an acyclic
$k$-partition $\mcC \in \mfc$ such that $e$ appears in $\mcC$ and furthermore
$e$ is contractible in $\mcC$.
\end{definition}

Note that by the characterization of chip firing equivalence given in
Remark~\ref{rem:edge reversal}, if $e$ is a contractible edge of a chip firing
equivalence class $\mfc$ and $e$ appears in $\mcC \in \mfc$, then $e$ is
contractible in $\mcC$.

If $\mcC$ is an acyclic $k$-partition and $e$ is a contractible edge of $\mcC$,
then $\mcC/e$ is an acyclic $(k-1)$-partition---the sets $e^-$ and $e^+$ in
$\Pi(\mcC)$ are replaced with the set $e^- \cup e^+$ in $\Pi(\mcC/e)$. If $e$
and $f$ are distinct edges of $\mcC$, and $e$ is contractible, we write $f/e$
for the edge corresponding to $f$ in $\mcC/e$. When the graph we are referring
to is clear from the context, we will sometimes abuse notation and write
$f$ instead of $f/e$. We define $[\mcC]/e$ to be $ [\mcC/e]$. The class $[\mcC]/e$
is well-defined: suppose $\mcC'$ is an acyclic $k$-partition with
$\Pi(\mcC') = \Pi(\mcC)$ such that $e$ is also contractible in $\mcC'$. Let
\[
E = \sum_{u \in e^-}\sum_{v \in e^+} a_{uv}u.
\]
By definition, $D(\mcC/e) = D(\mcC) - E$, and $D(\mcC'/e) = D(\mcC') - E$. Then
$D(\mcC')$ is linearly equivalent to $D(\mcC)$ if and only if $D(\mcC'/e)$ is
linearly equivalent to $D(\mcC/e)$.

\begin{definition}{\rm ({\bf Monomial Associated with Edge Contraction})}
Let $\mfc$ be an equivalence class of acyclic $k$-partitions with contractible
edge $e$. If $e$ appears in $\mcC \in \mfc$, we define the monomial
$m_{\mfc}(e) = {\bf x}^{D(\mcC) - D(\mcC/e)}$.
\end{definition}

The monomial $m_{\mfc}(e)$ is well-defined, since if $e$ also appears in $\mcC'
\in \mfc$, then $D(\mcC) - D(\mcC/e) = D(\mcC') - D(\mcC'/e)$.

\begin{definition} {\rm({\bf Refinements of Acyclic Orientations})}
Let $\ell$, $k \in \mathbb{N}$. An equivalence class $\mfc_1$ of acyclic $k$-partitions is called a {\bf
refinement} of an equivalence class $\mfc_2$ of acyclic $\ell$-partitions if
$\mfc_2$ is obtained from $\mfc_1$ by some sequence of edge contractions.
\end{definition}

Let $\mfc_1$ be an equivalence class of acyclic $k$-partitions and $\mfc_2$
an equivalence class of acyclic $(k-2)$-partitions. Suppose that there exists a
contractible edge $e$ of $\mfc_1$ and a contractible edge $f$ of $\mfc_1/e$
such that $\mfc_2 = (\mfc_1/e)/f$. The following lemma states that we can lift
$f$ to a unique contractible edge of $\mfc_1$, and then $f$ and $e$ are
contractible in either order.

\begin{lemma}\label{mutually contractible}
Let $\mfc_1$ and $\mfc_2$ be equivalence classes of acyclic partitions such
that $\mfc_2 = (\mfc_1/e)/f$ for some edges $e$ and $f$. Then there is a unique
edge $g$ which is contractible relative to $\mfc_1$ such that $g/e = f$.
Furthermore, we have the following:
\begin{enumerate}
    \item $e/g$ is contractible relative to $\mfc_1/g$.
    \item $[(\mfc_1/g)/e] = [(\mfc_1/e)/g] = \mfc_2$.
    \item There exists some $\hat{\mcC} \in \mfc_1$ in which both $e$ and $g$ appear
    (and hence are contractible).
\end{enumerate}
\end{lemma}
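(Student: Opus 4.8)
The plan is to reduce the whole statement to a computation inside one carefully chosen acyclic $k$-partition $\mcC \in \mfc_1$ and then read off $g$ from the local picture around $e$ and $f$. First I would produce a representative $\mcC \in \mfc_1$ with two features: (i) the directed edge $e$ appears in $\mcC$, and hence is contractible in $\mcC$ by the remark following Definition~\ref{def:contractible}; and (ii) the directed edge $f$ appears in $\mcC/e$ and is contractible there. Granting such a $\mcC$, I define $g$ as the preimage of $f$ under contraction of $e$: if neither endpoint of $f$ is the merged part $e^-\cup e^+$, then a single edge of $\mcC$ lies over $f$ and we take it for $g$; if $f$ is incident to $e^-\cup e^+$, with other endpoint $W$, its preimages are the (one or two) edges of $\mcC$ joining $W$ to $e^-$ and to $e^+$, oriented as $f$ dictates.

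The heart of the argument is to show that exactly one preimage is contractible in $\mcC$, and here I would use the elementary criterion that, for an acyclic orientation and an edge $h=(h^-,h^+)$ appearing in it, $h$ is contractible if and only if $h$ is the \emph{only} directed path from $h^-$ to $h^+$. In the ambiguous case $W$ is adjacent to both $e^-$ and $e^+$; since $e$ is contractible in $\mcC$ the classes $e^-,e^+,W$ span a transitive triangle, so together with $e$ exactly one preimage of $f$ completes a directed $2$-path (either $e^-\to e^+\to W$ or $W\to e^-\to e^+$), while the other preimage is the transitive short-cut edge. Contracting the short-cut would merge the ends of $e$ across the two remaining sides and create a $2$-cycle, so it is \emph{not} contractible; contracting the $2$-path leg $g$ is acyclic, because a directed path of length $\ge 2$ between its endpoints would survive the contraction of $e$ (a simple path from the inner endpoint cannot traverse $e$) and contradict contractibility of $f$ in $\mcC/e$. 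This gives existence of $g$ and shows any other preimage fails to be contractible in $\mcC$; combined with the remark after Definition~\ref{def:contractible} (a directed edge appearing in $\mcC$ is contractible relative to $\mfc_1$ iff it is contractible in $\mcC$), this yields uniqueness of $g$ among contractible edges of $\mfc_1$ with $g/e=f$.

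The three numbered claims then follow formally from order-independence of double contraction. Since contracting $e$ and $g$ only prescribes which parts of $\Pi(\mcC)$ get merged, the digraphs $(\mcC/g)/(e/g)$ and $(\mcC/e)/(g/e)=(\mcC/e)/f$ coincide; as $f$ is contractible in $\mcC/e$, this common digraph is acyclic, which is precisely the assertion that $e/g$ is contractible relative to $\mfc_1/g$, giving~(1). Passing to chip-firing classes, using that $D(\mcC/e)=D(\mcC)-E$ is order-independent as recorded before Lemma~\ref{mutually contractible}, gives $[(\mfc_1/g)/e]=[(\mfc_1/e)/g]=[(\mcC/e)/f]=\mfc_2$, which is~(2). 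Finally~(3) holds with $\hat{\mcC}=\mcC$, since by construction both $e$ and $g$ appear in $\mcC$ and are contractible there.

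I expect the main obstacle to be step~(i)--(ii): exhibiting a single representative of $\mfc_1$ that displays $e$ and the correct orientation of $f$ at once. One cannot simply lift an arbitrary representative of $\mfc_1/e$ in which $f$ appears, since contraction is not injective on chip-firing classes and a lift need not lie in $\mfc_1$. Instead I would start from any $\mcC_0\in\mfc_1$ containing $e$, form $\mcC_0/e\in\mfc_1/e$, and transport the source/sink reversals of Remark~\ref{rem:edge reversal} that carry $\mcC_0/e$ to a representative of $\mfc_1/e$ displaying $f$ back up to $\mfc_1$. A reversal at a part other than $e^-\cup e^+$ lifts to the reversal at the same part upstairs and leaves $e$ untouched; the delicate case is a reversal at the merged part $e^-\cup e^+$, which I would realize upstairs by reversing $e^-$ and then $e^+$ (or $e^+$ then $e^-$), a composition that reverses every external edge at $e^-$ and $e^+$ but flips $e$ twice, hence preserves the directed edge $e$ while staying inside $\mfc_1$. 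Checking that this two-step maneuver is always available---that the second endpoint really becomes a source (resp.\ sink) after the first reversal---is the crux, and it is exactly where the single-edge structure of the quotient between $e^-$ and $e^+$ is used.
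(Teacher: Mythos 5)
Your proposal is correct, and it follows the same two-step skeleton as the paper's proof: produce a single representative $\hat{\mcC}\in\mfc_1$ in which $e$ appears and whose quotient $\hat{\mcC}/e$ displays $f$ contractibly, then analyze the one or two preimages of $f$ locally, show exactly one is contractible in $\hat{\mcC}$, and promote this to class-level uniqueness via the remark following Definition~\ref{def:contractible}; items (1)--(3) then follow from order-independence of the double contraction, exactly as in the paper. The differences are nonetheless instructive. Where the paper simply ``lifts $\mcC$ to an acyclic partition $\hat{\mcC}\in\mfc_1$ by preserving the orientation of $e$,'' you transport the source/sink reversals of Remark~\ref{rem:edge reversal} upstairs, simulating a reversal at the merged part by reversing $e^-$ and then $e^+$ (so that $e$ is flipped twice and preserved); this maneuver does go through --- after reversing the lifted source at the tail $e^-$, the head $e^+$ becomes a source, and dually for sinks --- and it buys a self-contained justification of the membership $\hat{\mcC}\in\mfc_1$, which in the paper's terser version tacitly rests on the converse of Lemma~\ref{lem:projection to partition} (linear equivalence of $D(\mcC)$ and $D(\mcC')$ on $G$ implying chip-firing equivalence), a fact the paper records without proof and claims not to need. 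Second, your identification of the contractible preimage is the correct one: when $f^-=e^-\cup e^+$, the two-path leg $(e^+,f^+)$ is contractible while the shortcut $(e^-,f^+)$ produces a directed $2$-cycle through $e^+$ upon contraction (test on the triangle with $e=(1,2)$, $f=(\{1,2\},3)$); the paper's text asserts the opposite assignment, which is a harmless typo since only the statement ``exactly one of them is contractible'' is used downstream. Your supporting criterion --- an edge of an acyclic orientation is contractible iff it is the unique directed path from its tail to its head --- and the survival-under-contraction argument showing the leg is genuinely contractible (any competing $e^+\rightsquigarrow f^+$ path of length at least two avoids $e$ and descends to contradict contractibility of $f$ in $\hat{\mcC}/e$) are both sound, and in fact supply more detail than the paper does at the corresponding step.
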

\begin{proof}
Let $\mcC \in \mfc_1/e$ be such that $f$ is contractible in $\mcC$, and lift
$\mcC$ to an acyclic partition $\hat{\mcC} \in \mfc_1$ by preserving the orientation
of $e$, and orienting every other edge of $G_{\Pi(\mfc_1)}$ as in $\mcC$. If
there is only one edge $g \in \hat{\mcC}$ such that $g/e = f$, we are done. If
there are two such edges (i.\,e., when either $f^-$ or $f^+$ is equal to $e^+\cup
e^-$), then only one of them is contractible: if $f^- = e^+\cup e^-$ then
$(e^+,f^+)$ is not contractible in $\hat{\mcC}$ but $(e^-,f^+)$ is, and
similarly if $f^+ = e^+\cup e^-$. In all cases, there is exactly one
contractible edge $g \in \hat{\mcC}$ such that $g/e = f$, and the result
follows.
\end{proof}

We now define functions $\sign_\mfc$ for every equivalence class $\mfc$ of acyclic
$k$-partitions, taking the contractible edges of $\mfc$ to $\{\pm 1\}$. We
choose these maps so that if $e$ and $f$ are distinct contractible edges of
$\mfc$, then 
\begin{equation}\label{eq:sign prop1}
\sign_\mfc(e)\sign_{\mfc/e}(f) = -\sign_\mfc(f)\sign_{\mfc/f}(e).
\end{equation}
Furthermore, we insist that if both $e = (A,B)$ and $\hat{e} = (B,A)$ are
contractible edges of $\mfc$ for some sets $A,B\in\Pi(\mfc)$, then
\begin{equation}\label{eq:sign prop2}
\sign_{\mfc}(e) = -\sign_{\mfc}(\hat{e}).
\end{equation}

\begin{proposition}\label{prop:sign exist}
A function $\sign_\mfc$ satisfying~\eqref{eq:sign prop1} and~\eqref{eq:sign
prop2} exists.
\end{proposition}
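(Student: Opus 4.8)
The plan is to construct the functions $\sign_\mfc$ by induction on the number of blocks $k$, exploiting that relation~\eqref{eq:sign prop1} is only active for pairs $e,g$ that are \emph{jointly} contractible (i.e. both appear in a common representative of $\mfc$, so that $\sign_{\mfc/e}(g)$ and $\sign_{\mfc/g}(e)$ are both defined), and that it then expresses rank-$k$ data in terms of the already-constructed rank-$(k-1)$ data. For $k\le 2$ there are no jointly contractible pairs, so only~\eqref{eq:sign prop2} is in force; I would satisfy it by declaring $\sign_\mfc(e)=+1$ on one orientation of each contractible edge and $-1$ on its reverse. For the inductive step, fix a class $\mfc$ of acyclic $k$-partitions with $\sign$ already defined on all lower-rank classes, and set
\[
\tau_\mfc(e,g) \;=\; -\,\sign_{\mfc/e}(g)\,\sign_{\mfc/g}(e)\in\{\pm1\}
\]
for each jointly contractible pair $e,g$ of $\mfc$. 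Then~\eqref{eq:sign prop1} for $\mfc$ is exactly the requirement $\sign_\mfc(e)\sign_\mfc(g)=\tau_\mfc(e,g)$. Using~\eqref{eq:sign prop2} at rank $k-1$ together with the fact that $\mfc/e=\mfc/\hat e$ as classes, one checks that this requirement is unchanged when $e$ or $g$ is replaced by its reverse; hence~\eqref{eq:sign prop1} descends to a condition on \emph{unoriented} contractible edges. It therefore suffices to solve $y_{\bar e}\,y_{\bar g}=\tau_\mfc(e,g)$ for one sign $y_{\bar e}$ per unoriented edge $\bar e$, and then recover $\sign_\mfc$ on both orientations via~\eqref{eq:sign prop2}.

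Regarding the unoriented contractible edges of $\mfc$ as the vertices of a graph $\Gamma_\mfc$, with $\bar e$ and $\bar g$ adjacent exactly when $e,g$ are jointly contractible, the system $y_{\bar e}y_{\bar g}=\tau_\mfc(e,g)$ is solvable if and only if $\tau_\mfc$ multiplies to $+1$ around every cycle of $\Gamma_\mfc$. The computational heart is that this holds on triangles: for jointly contractible $e,g,h$ one has $\tau_\mfc(e,g)\tau_\mfc(g,h)\tau_\mfc(h,e)=+1$. To verify this I would expand the product and regroup the six factors as
\[
(-1)^3\big[\sign_{\mfc/e}(g)\sign_{\mfc/e}(h)\big]\big[\sign_{\mfc/g}(e)\sign_{\mfc/g}(h)\big]\big[\sign_{\mfc/h}(e)\sign_{\mfc/h}(g)\big],
\]
and then apply~\eqref{eq:sign prop1} at rank $k-1$ to each bracket; for instance the first becomes $-\sign_{\mfc/\{e,g\}}(h)\,\sign_{\mfc/\{e,h\}}(g)$. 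By Lemma~\ref{mutually contractible} the doubly contracted classes do not depend on the order of contraction, so each rank-$(k-2)$ sign that appears occurs exactly twice and squares to $1$; what survives is $(-1)^3(-1)^3=+1$, as claimed.

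The remaining step, and the one I expect to be the main obstacle, is to deduce consistency of $\tau_\mfc$ around \emph{every} cycle of $\Gamma_\mfc$ from consistency around triangles. Here the diamond structure of Lemma~\ref{mutually contractible} is essential: since distinct blocks of a partition have distinct images in $G_\Pi$, each covering relation of the refinement poset corresponds to a unique unoriented contractible edge, and Lemma~\ref{mutually contractible} shows every rank-two interval has exactly two intermediate classes. Thus the poset of chip-firing classes is thin, and $\Gamma_\mfc$ is the $1$-skeleton of the simplicial complex $\Delta_\mfc$ of jointly contractible edge-sets, whose $2$-faces are precisely the triangles above. If $\Delta_\mfc$ is simply connected then its boundaries of $2$-faces span the cycle space of $\Gamma_\mfc$, the triangle identity forces $\tau_\mfc$ to multiply to $+1$ around every cycle, and the system is solvable, closing the induction. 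I would establish this simple-connectivity either by producing a shelling of $\Delta_\mfc$ or by a direct deformation of loops that repeatedly uses the acyclicity bookkeeping already packaged in Lemma~\ref{mutually contractible}; pinning down that topological input is the crux of the argument, while the sign computation itself is routine once it is in hand.
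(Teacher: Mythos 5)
Your plan has a genuine gap, and it is the one you yourself flag: the solvability of the system $y_{\bar e}y_{\bar g}=\tau_\mfc(e,g)$ requires that $\tau_\mfc$ multiply to $+1$ around \emph{every} cycle of the constraint graph $\Gamma_\mfc$, and you only verify this on triangles, deferring the reduction of arbitrary cycles to triangles to an unproven simple-connectivity claim for $\Delta_\mfc$ (``I would establish this \dots by producing a shelling \dots or by a direct deformation of loops''). No argument is given, and this is not routine: even your triangle identity quietly requires the triple $e,g,h$ to be \emph{jointly} contractible (so that, e.g., $g$ and $h$ are both contractible relative to $\mfc/e$ and Property~\eqref{eq:sign prop1} applies inside each bracket), whereas adjacency in $\Gamma_\mfc$ only records pairwise joint contractibility; so the $2$-faces available to span the cycle space may be strictly fewer than the triangles of $\Gamma_\mfc$. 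A second, concrete error is your descent to unoriented edges: you invoke ``$\mfc/e=\mfc/\hat e$ as classes,'' but by Remark~\ref{binsyz_rem} this holds \emph{if and only if} $\{e^-,e^+\}$ is a bridge of $G_{\Pi(\mfc)}$. For a non-bridge edge with both orientations contractible relative to $\mfc$, the quantities $\sign_{\mfc/e}(g)$ and $\sign_{\mfc/\hat e}(g)$ live at two different rank-$(k-1)$ classes, and your inductive hypothesis supplies no relation between them, so $\tau_\mfc$ is not obviously well defined on unoriented pairs and Property~\eqref{eq:sign prop2} cannot simply be bolted on afterwards.

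The paper sidesteps all of this by giving an explicit, non-inductive formula rather than solving a constraint system: fix once and for all a total ordering $\tau_\mfc$ of $\Pi(\mfc)$ for every class $\mfc$; given a contractible edge $e$, choose an ordering $\rho$ placing $e^-$ first and $e^+$ second, let $\rho/e$ be the induced ordering after merging, and set $\sign_\mfc(e)$ to be the product of the signs of the permutations carrying $\rho$ to $\tau_\mfc$ and $\rho/e$ to $\tau_{\mfc/e}$. This is the standard Koszul-type sign; well-definedness, Property~\eqref{eq:sign prop2}, and Property~\eqref{eq:sign prop1} (in four easy cases according to how $e$ and $f$ share endpoints) are then short permutation-sign computations, and existence is immediate by construction. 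If you want to salvage your approach, the efficient fix is the same move: exhibit an explicit cocycle solving your system (your $\tau_\mfc$ is in fact a coboundary of the paper's ordering signs), rather than proving the topological statement that your obstruction vanishes.
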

\begin{proof}
For every equivalence class $\mfc$ of acyclic $k$-partitions, fix a total
ordering $\tau_{\mfc}$ of $\Pi(\mfc)$.  If $\tau$ is a total ordering of a
set $C$, denote by $\pos_{\tau}(c)$ the position of $c$ in this total
ordering for any $c \in C$.

Let $\mfc$ be an equivalence class of acyclic $k$-partitions, and let $e$ be a
contractible edge of $\mfc$. Let $\rho$ be a total ordering of $\Pi(\mfc)$
such that $\pos_{\rho}(e^-) = 0$ and $\pos_{\rho}(e^+) = 1$. Let $\rho/e$
be the total ordering of $\Pi(\mcC/e)$ with $\pos_{\rho/e}(e^- \cup e^+) =
0$, and $A <_{\rho/e} B$ if $A <_{\rho} B$ for all other sets.  Let
$\sign(\rho)$ and $\sign(\rho/e)$ denote the signs of the permutation taking
$\rho$ to $\tau_{\mfc}$, and $\rho/e$ to $\tau_{\mfc/e}$, respectively. We
define $\sign_{\mfc}(e) = \sign(\sigma)\sign(\sigma/e)$.

The function $\sign_{\mfc}$ does not depend on the choice of $\rho$;
indeed, if we take another total ordering $\rho'$ of $\Pi(\mcC)$ for which
$\pos_{\rho'}(e^-) = 0$ and $\pos_{\rho'}(e^+) = 0$, then the sign of the
permutation taking $\rho'$ to $\rho$ is the same as the sign of the
permutation taking $\rho'/e$ to $\rho/e$.

Clearly $\sign_{\mfc}$ satisfies~\eqref{eq:sign prop2}. To verify~\eqref{eq:sign
prop1} for contractible edges $e,f$ of $\mfc$, there are four cases to
consider: (i) $e^-\cup e^+$ and $f^-\cup f^+$ are disjoint; (ii) $e^- = f^-$;
(iii) $e^+ = f^+$; and (iv) $e^- = f^+$. The argument for all four cases is
similar. For example, in case (i) we consider a total ordering $\rho$
of $\Pi(\mfc)$ for which the first four elements are $e^-, e^+, f^-, f^+$, in
that order, and compute signs by contracting $e$ and $f$ in either order.
\end{proof}

\section{Minimal free resolution of $I_G$}\label{sec:I_G}

We now define the complex $\mcF_1(G)$ that, as we will show, is a minimal
free resolution for $I_G$. For an equivalence class of acyclic
$(k+1)$-partitions $\mfc$, let $D(\mfc) \in \bbZ^n/\Lambda_G$ denote the linear
equivalence class of divisors corresponding to the elements of $\mfc$. For $k$ from
$0$ to $n-1$, define the $k\nth$ homological degree of $\mcF_1(G)$ to be the
free module
\[
F_{1,k} = \bigoplus_{\mfc}R(-D(\mfc)),
\]
where the direct sum is taken over all chip-firing equivalence classes of
acyclic $(k+1)$-partitions $\mfc$, which are identified with the standard basis
elements of $F_{1,k}$, and $R(-D(\mfc))$ denotes the twist of $R$
by $-D(\mfc)$. Now we define differentials $\delta_{1,k}: F_{1,k+1} \rightarrow
F_{1,k}$ of $\mcF_1(G)$ by the equations
\begin{equation}\label{diff_form}
\delta_{1,k}(\mfc)=\sum_{e} \sign_{\mfc}(e)m_{\mfc}(e)\cdot (\mfc/e)
\end{equation}
where the sum is taken over contractible edges of $\mfc$. Then $\mcF_1(G)$ is the sequence 
\[
\mcF_1(G): F_{1,n-1} \xrightarrow{\delta_{1,n-1}}\cdots 
\xrightarrow{\delta_{1,2}}F_{1,1} \xrightarrow{\delta_{1,1}} F_{1,0}.
\]

\begin{example}\label{IG_ex}
For the ``kite graph'' $G$ depicted in Figure~\ref{fig:kite graph}, the
complex $\mcF_1(G)$ reads as follows:
\[
\mcF_1(G): R^4  \xrightarrow{\delta_{1,3}} R^9
\xrightarrow{\delta_{1,2}}  {R}^6    \xrightarrow{\delta_{1,1}}   {R}^1.
\]
The matrices of differentials are
%\resizebox{\linewidth}{!}{
\begin{align*}
\delta_{1,1} & = 
\left(\begin{array}{rrrrrr}
x_{1}^{3} -  x_{2} x_{3} x_{4} & x_{2}^{2} -  x_{1} x_{4} & x_{3}^{2} -  x_{1}
x_{4} & x_{1}^{2} x_{2} -  x_{3} x_{4}^{2} & x_{1}^{2} x_{3} -  x_{2} x_{4}^{2}
& x_{1} x_{2} x_{3} -  x_{4}^{3}
\end{array}\right)\\
\delta_{1,2} &= \left(\begin{array}{rrrrrrrrr}
0 & - x_{2} & - x_{4} & - x_{3} & - x_{4} & 0 & 0 & 0 & 0 \\
- x_{3}^{2} + x_{1} x_{4} & - x_{3} x_{4} & - x_{1}^{2} & 0 & 0 & 0 & 0 & -
x_{4}^{2} & - x_{1} x_{3} \\
x_{2}^{2} -  x_{1} x_{4} & 0 & 0 & - x_{2} x_{4} & - x_{1}^{2} & - x_{4}^{2}
& - x_{1} x_{2} & 0 & 0 \\
0 & x_{1} & x_{2} & 0 & 0 & - x_{3} & - x_{4} & 0 & 0 \\
0 & 0 & 0 & x_{1} & x_{3} & 0 & 0 & - x_{2} & - x_{4} \\
0 & 0 & 0 & 0 & 0 & x_{1} & x_{3} & x_{1} & x_{2}
\end{array}\right)\\
\delta_{1,3} &=\left(\begin{array}{rrrr}
- x_{4} & 0 & 0 & x_{1} \\
x_{3} & 0 & - x_{4} & 0 \\
0 & x_{3} & 0 & x_{4} \\
- x_{2} & - x_{4} & 0 & 0 \\
0 & 0 & x_{2} & - x_{4} \\
x_{1} & x_{2} & 0 & 0 \\
0 & 0 & - x_{1} & x_{2} \\
- x_{1} & 0 & x_{3} & 0 \\
0 & - x_{1} & 0 & - x_{3}
\end{array}\right).
\end{align*}
%}
   
The basis elements of the free modules in $\mcF_1(G)$ correspond to
the six chip firing equivalence classes of acyclic $2$-partitions, nine chip
firing equivalence classes of acyclic $3$-partitions and four chip firing
equivalence classes of acyclic $4$-partitions, in the order (from left to
right) depicted in Figure~\ref{fig:kite partitions}.  \qed
\end{example}

The first main result of this paper is the following:
\begin{theorem}\label{thm:I_G}
$\mcF_1(G)$ is a minimal free resolution of $I_G$.
\end{theorem}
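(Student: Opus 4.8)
The plan is to verify in turn the three properties required of a minimal free resolution of $R/I_G$: that $\mcF_1(G)$ is a complex, that it is acyclic with $H_0=R/I_G$, and that it is minimal. The first and last are essentially formal, given the way the differentials were set up; the real content is acyclicity, which I would deduce from the analogous (and, at this point in the paper, still to be established) statement for the monomial complex $\mcF_0(G)$ by means of a flat Gr\"obner degeneration.

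First I would check that $\mcF_1(G)$ is a complex of $\bbZ^n/\Lambda_G$-graded modules. Homogeneity is immediate, since $D(\mcC)-D(\mcC/e)+D(\mfc/e)\equiv D(\mfc)$ in $\bbZ^n/\Lambda_G$, which is exactly how the twists $R(-D(\mfc))$ were chosen. For $\delta_{1,k}\circ\delta_{1,k+1}=0$, the coefficient of a basis element $\mfc_2$ in $\delta_{1,k}\delta_{1,k+1}(\mfc)$ is a sum over chains $\mfc\to\mfc/e\to\mfc_2$. By Lemma~\ref{mutually contractible} each contraction pair $(e,f)$ is matched with the pair contracting the same two edges in the opposite order, $g$ being the unique contractible lift of $f$; on a common representative $\hat\mcC$ (Lemma~\ref{mutually contractible}(3)) the monomials telescope, $m_{\mfc}(e)\,m_{\mfc/e}(f)=\bfx^{D(\hat\mcC)-D(\hat\mcC/e/f)}=m_{\mfc}(g)\,m_{\mfc/g}(e)$, and is symmetric in the order of contraction, while~\eqref{eq:sign prop1} and~\eqref{eq:sign prop2} force the two signs to be opposite. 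Hence the contributions cancel in pairs.

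Granting that $\mcF_0(G)$ is the minimal free resolution of $R/M_G$ (the content of Section~\ref{sec:M_G}), the heart of the argument is a family relating the two complexes. Since $M_G=\operatorname{in}_\omega(I_G)$ for a suitable integral weight $\omega$, I would promote $\omega$ to a weight on all of $\mcF_1(G)$: the potential-theoretic function $b_q$ of Baker--Shokrieh assigns a weight to each basis element $\mfc$ so that every entry $\pm m_{\mfc}(e)$ becomes weight-homogeneous, with the term surviving as $t\to0$ being precisely the initial monomial recorded by $\mcF_0(G)$. Tracking these weights with an auxiliary variable $t$ yields a complex $\mcF_t(G)$ of free modules over $\bbK[t][x_1,\dots,x_n]$, flat over $\bbK[t]$, whose fibre over $(t)$ is $\mcF_0(G)$ and whose fibre over $(t-t_0)$ with $t_0\neq0$ is carried isomorphically onto $\mcF_1(G)$ by the rescaling $x_i\mapsto t_0^{-\omega_i}x_i$; the relation $\delta_t^2=0$ is inherited from weight-homogeneity together with $\delta^2=0$ in the two special fibres.

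With the family in hand, I would finish by semicontinuity. The rank of each specialized differential $\delta_{1,k}\otimes\kappa(s)$ is a lower-semicontinuous function of $s\in\spec\bbK[t]$, so the fibrewise homology dimensions are upper semicontinuous, and the locus where they vanish for all $i\geq1$ is open. This locus contains the point $(t)$, because $\mcF_0(G)$ resolves $R/M_G$; being a nonempty open subset of $\mathbb{A}^1$ it omits only finitely many closed points, and since all fibres over $t_0\neq0$ are mutually isomorphic their common homology dimension must therefore be $0$, i.e.\ $\mcF_1(G)$ is acyclic in positive degree. As the degeneration preserves Hilbert functions, $H_0(\mcF_1(G))=R/\im\delta_{1,1}$ has the Hilbert function of $R/M_G=R/I_G$, and since each generating binomial visibly lies in $I_G$ we get $\im\delta_{1,1}=I_G$. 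Minimality is then automatic: each nonzero entry $m_{\mfc}(e)$ has positive total degree $\sum_{u\in e^-,v\in e^+}a_{uv}>0$, so no entry is a unit and $\delta_{1,k}(F_{1,k})\subseteq(x_1,\dots,x_n)F_{1,k-1}$. I expect the genuine difficulty to sit in the two steps this plan treats as black boxes---the exactness of $\mcF_0(G)$ itself, and the verification that a single integral weight such as $b_q$ simultaneously realizes $M_G$ as the initial ideal \emph{and} renders every differential of $\mcF_1(G)$ weight-homogeneous, so that $\mcF_t(G)$ is an honest flat family with exactly the two prescribed fibres.
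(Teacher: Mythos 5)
Your route coincides with the paper's in all but one step. The complex property is checked exactly as in Lemma~\ref{lem:I_G complex}: pairing the two orders of contraction via Lemma~\ref{mutually contractible}, telescoping the monomials on a common representative $\hat\mcC$, and cancelling signs via~\eqref{eq:sign prop1} (note that~\eqref{eq:sign prop2} is not actually needed for $\delta^2=0$; it only normalizes the two orientations of a bridge). The degeneration is also the paper's: $\mcF_t(G)$ is the homogenization of $\mcF_1(G)$ with respect to a weight $\lambda$ solving $\Lambda_G\lambda=y$ with $y_i>0$ for $i\neq n$, and your second ``black box''---that one weight simultaneously realizes $M_G$ as the initial ideal and makes the surviving terms at $t=0$ exactly the differentials of $\mcF_0(G)$---is Lemma~\ref{lem:0fiber}, which the paper dispatches in one line: $\lambda\cdot(D(\mcA)-D(\mcB))=\sigma^{T}\Lambda_G\lambda=\sigma^{T}y>0$. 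Where you genuinely diverge is the transfer of exactness to the fiber at $t=1$: the paper first kills $H_k(\mcF_t(G))$ outright by graded Nakayama over $\bbK[t]$ (using $H_k(\mcF_t(G))\otimes\bbK[t]/(t)\hookrightarrow H_k(\mcF_0(G))=0$ and the positive grading from $\lambda_i>0$), then descends to the fiber via flatness, i.\,e., $t-1$ is a nonzerodivisor on $R/I_G^t$ by \cite[Propositions 8.26 and 8.28]{MilStu05}; it also obtains $\im\delta_{1,1}=I_G$ directly from the Gr\"obner basis statement of \cite[Theorem 25]{ManStu12} rather than your Hilbert-function comparison.

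Your semicontinuity step has a real gap as stated. The fibers $\mcF_t(G)\otimes\kappa(s)$ are complexes of free modules over the polynomial ring $\kappa(s)[x_1,\dots,x_n]$, so ``rank of the specialized differential'' can only mean generic rank, and generic ranks do not detect exactness of such complexes: the complex $0\to R\xrightarrow{\phi} R^2\xrightarrow{\psi} R$ with $\phi=\binom{y^2}{-xy}$ and $\psi=(x\;\,y)$ has ranks $1+1=2$ yet $H_1\cong R/(y)\neq 0$; nor is ``$\dim H_k$'' of the fiber a number. The repair is to argue degreewise: since $\Lambda_G$ has zero row sums, linear equivalence preserves total degree, so $I_G$, the twists $R(-D(\mfc))$, and every entry of $\delta_{t,k}$ are homogeneous for the standard $\bbZ$-grading with $\deg t=0$; each graded piece of $\mcF_t(G)$ is then a complex of finite free $\bbK[t]$-modules, where rank lower-semicontinuity and the linear-algebra formula for $\dim H_k$ are legitimate, and the rescaling $x_i\mapsto t_0^{\lambda_i}x_i$ is degree-preserving. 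You should also not claim openness of the locus where homology vanishes ``for all $i\geq 1$''---that is an intersection of infinitely many opens (one per homological degree $k$ \emph{and} per graded degree $d$)---but this is harmless: for fixed $(k,d)$ the fiber homology dimension is a constant $c$ on $t_0\neq 0$ and zero at $t=0$, and if $c>0$ the closed set $\{s:\dim\geq c\}$ would contain the dense set $\mathbb{A}^1\setminus\{0\}$, hence the origin, a contradiction. With these repairs your variant is correct; the graded finite-dimensionality you need is precisely the flatness input the paper packages as Proposition~\ref{prop:flat module}, and the paper's Nakayama argument is the more economical formalization of the same idea.
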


We remark that $\mcF_1(G)$ is naturally graded by $\bbZ^n/\Lambda_G$. In Lemma
\ref{lem:I_G complex} below, we will show that $\mcF_1(G)$ is complex and that
the cokernel of $\delta_{1,1}$ is equal to $R/I_G$. We will complete the proof
of Theorem \ref{thm:I_G} in Section \ref{F1exact_sect} where we establish the
exactness of $\mcF_1(G)$.

\begin{lemma}\label{lem:I_G complex}
$\mcF_1(G)$ is a complex of free $R$-modules, and the cokernel of
$\delta_{1,1}$ is equal to $R/I_G$.
\end{lemma}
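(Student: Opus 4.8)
The plan is to verify two things separately: that $\delta_{1,k-1}\circ\delta_{1,k}=0$ for all $k$ (so that $\mcF_1(G)$ is a complex), and that $\im\delta_{1,1}=I_G$ (so that the cokernel of $\delta_{1,1}$ is $R/I_G$). I work throughout with the $\bbZ^n/\Lambda_G$-grading, under which each $\delta_{1,k}(\mfc)$ is a single homogeneous element and the free module $F_{1,0}$ is identified with $R$ via the unique $1$-partition.

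For the complex property, I would expand
\[
\delta_{1,k-1}\bigl(\delta_{1,k}(\mfc)\bigr)=\sum_{e}\sum_{f}\sign_{\mfc}(e)\sign_{\mfc/e}(f)\,m_{\mfc}(e)\,m_{\mfc/e}(f)\cdot\bigl((\mfc/e)/f\bigr),
\]
where $e$ runs over contractible edges of $\mfc$ and $f$ over contractible edges of $\mfc/e$, and group the terms by the target class $\mfc_2=(\mfc/e)/f$. The key input is Lemma~\ref{mutually contractible}: for each such pair $(e,f)$ there is a unique contractible edge $g$ of $\mfc$ with $g/e=f$, and then $e/g$ is contractible relative to $\mfc/g$ with $(\mfc/g)/(e/g)=\mfc_2$. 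I would check that $(e,f)\mapsto(g,e/g)$ is a fixed-point-free involution on the set of pairs with target $\mfc_2$; it has no fixed point since $g\ne e$, as $f$ is an honest edge of $\mfc/e$. On a partnered pair the two monomials agree: using a representative $\hat{\mcC}\in\mfc$ in which both $e$ and $g$ appear (Lemma~\ref{mutually contractible}(3)), both products telescope to $\bfx^{D(\hat{\mcC})-D(\hat{\mcC}/\{e,g\})}$, the double contraction being independent of order. The two signs are opposite by~\eqref{eq:sign prop1} applied to the distinct contractible edges $e,g$ of $\mfc$. Hence the contributions to each $\mfc_2$ cancel in pairs, and $\delta_{1,k-1}\circ\delta_{1,k}=0$.

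For the cokernel I would first identify the basis of $F_{1,1}$: an acyclic $2$-partition is a connected $2$-partition $\{U,\overline U\}$ together with one of the two orientations of its single edge bundle, and by Remark~\ref{rem:edge reversal} these two orientations are chip firing equivalent, so the classes indexing $F_{1,1}$ are exactly the connected $2$-partitions. Each such $\mfc$ has precisely two contractible edges, $e=(U,\overline U)$ and $\hat e=(\overline U,U)$, both contracting to the $1$-partition; since $m_{\mfc}(e)=\bfx^{U\to\overline U}$, $m_{\mfc}(\hat e)=\bfx^{\overline U\to U}$, and $\sign_{\mfc}(e)=-\sign_{\mfc}(\hat e)$ by~\eqref{eq:sign prop2}, one gets $\delta_{1,1}(\mfc)=\pm\bigl(\bfx^{U\to\overline U}-\bfx^{\overline U\to U}\bigr)$. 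By Lemma~\ref{lem:projection to partition} the two exponent vectors differ by an element of $\Lambda_G$, so this binomial lies in $I_G$, giving $\im\delta_{1,1}\subseteq I_G$. For the reverse inclusion I would invoke that $M_G=\operatorname{in}_{\prec}(I_G)$ for a term order $\prec$ under which the sink-free monomial $\bfx^{U\to\overline U}$ is the initial term of $\bfx^{U\to\overline U}-\bfx^{\overline U\to U}$ (Cori--Rossin--Salvy), together with the known fact that the minimal generators of $M_G$ are exactly the monomials $\bfx^{U\to\overline U}$ attached to connected $2$-partitions. Then $\operatorname{in}_{\prec}(\im\delta_{1,1})$ contains all minimal generators of $M_G$, whence $\operatorname{in}_{\prec}(\im\delta_{1,1})=M_G=\operatorname{in}_{\prec}(I_G)$; combined with $\im\delta_{1,1}\subseteq I_G$ this forces $\im\delta_{1,1}=I_G$.

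The main obstacle is the sign- and monomial-bookkeeping in the complex property: one must be certain that Lemma~\ref{mutually contractible} really furnishes a bijective, sign-reversing pairing of all length-two contraction paths with a fixed target, with no exceptional or self-paired terms. Everything else---identifying the $F_{1,1}$-basis, recognizing the differential as a Laplacian binomial via Lemma~\ref{lem:projection to partition}, and the initial-ideal comparison---is comparatively routine given the earlier lemmas and the known properties of $M_G$ and $I_G$.
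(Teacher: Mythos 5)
Your proposal is correct and takes essentially the same route as the paper's proof: the complex property via the unique lift of Lemma~\ref{mutually contractible}, sign property~\eqref{eq:sign prop1}, and the telescoping identity $m_{\mcC}(e)m_{\mcC/e}(g)=m_{\mcC}(g)m_{\mcC/g}(e)$ computed on a representative $\hat{\mcC}$ containing both $e$ and $g$, and the cokernel via the binomials $\pm(\bfx^{U\to\overline{U}}-\bfx^{\overline{U}\to U})$ attached to connected $2$-partitions together with the Cori--Rossin--Salvy/Manjunath--Sturmfels Gr\"obner basis result. Your only deviations are cosmetic: your explicit fixed-point-free involution just makes the paper's pairwise cancellation precise, and your initial-ideal sandwich $\operatorname{in}_{\prec}(\im\delta_{1,1})=M_G=\operatorname{in}_{\prec}(I_G)$ unpacks what the paper gets directly from the observation that a Gr\"obner basis in particular generates $I_G$.
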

\begin{proof}
First, we show that the cokernel of $\delta_{1,1}$ is equal to $R/I_G$. If $\mfc$
is an equivalence class of acyclic 2-partitions, then the two elements $\mcC_1,
\mcC_2$ of $\mfc$ are the two possible orientations of an edge
$\{A,\overline{A}\}$, where both $A$ and $\overline{A}$ induce connected
subgraphs of $G$.  Then 
\[
\delta_{1,1}(\mfc) = \pm(\bfx^{D(\mcC_1)} - \bfx^{D(\mcC_2)}) =
\pm(\bfx^{S \to \overline{S}} - \bfx^{\overline{S}\to S})
\]
which lies in $I_G$ since $D(\mcC_1)$ and $D(\mcC_2)$ are linearly equivalent.
Furthermore, by \cite[Theorem 25]{ManStu12}, (following \cite[Theorem
14]{CorRosSal02}), the binomials 
\[
{\bf x}^{S \to \overline{S}} - {\bf x}^{\overline{S} \to S},
\]
where both $S$ and $\overline{S}$ are connected, form a Gr\"obner basis for
$I_G$, and in particular they generate $I_G$.

Now we show that the $\delta_{1,k}$ are differentials.  Fix an equivalence class
$\mfc$ of acyclic $k$-partitions of $G$, with $k \ge 2$. We wish to show that
for any equivalence class $\mfc'$ of acyclic $(k-2)$-partitions of $G$, the
$\mfc'$ component of $\delta_{1,k-1}(\delta_{1,k}(\mfc))$ is $0$. A nonzero term
appearing in the $\mfc'$ component of $\delta_{1,k-1}(\delta_{1,k}(\mfc))$ results
from a sequence of two edge contractions, say a contractible edge $e$ of $\mfc$
and a contractible edge $f$ of $\mfc/e$. By Lemma~\ref{mutually contractible},
there exists a unique edge $g$ of $\mfc$ such that $g$ is contractible and $g/e
= f$. Furthermore, $\mfc' = (\mfc/g)/e$. 
Thus, it suffices to show that
\[
\sign_{\mfc}(e)\sign_{\mfc/e}(g)m_{\mfc}(e)m_{\mfc/e}(g) +
\sign_{\mfc}(g)\sign_{\mfc/g}(e)m_{\mfc}(g)m_{\mfc/g}(e) = 0.
\]
By Property~\eqref{eq:sign prop1} of $\sign_{\mfc}$, it suffices to show that
\[
m_{\mcC}(e)m_{\mcC/e}(g) = m_{\mcC}(g)m_{\mcC/g}(e).
\]

Let $\mcC \in \mfc$ be such that both $e$ and $g$ appear in $\mcC$. 
Note that such an acyclic $k$-partition is guaranteed by Lemma~\ref{mutually contractible}.  We have
\[
m_{\mcC}(e)m_{\mcC/e}(g) = {\bf x}^{D(\mcC) - D(\mcC/e)}{\bf x}^{D(\mcC/e) - D((\mcC/e)/g)}
= {\bf x}^{D(\mcC) - D((\mcC/e)/g)}
\]
and similarly $m_{\mcC}(g)m_{\mcC/g}(e) = {\bf x}^{D(\mcC) - D((\mcC/e)/g)}$.
\end{proof}

\begin{remark}\label{binsyz_rem}
 When viewed as a matrix with entries over the polynomial ring
$R$, the nonzero entries of $\delta_{1,k}$ are either monomials or binomials. Let
$\mfc$ be an equivalence class of acyclic $(k+1)$-partitions, and $\mfc'$ an
equivalence class of acyclic $k$-partitions. Suppose some contractible edge $e$
of $\mfc$ satisfies $\mfc/e = \mfc'$. Then $\Pi(\mfc')$ is obtained from
$\Pi(\mfc)$ by replacing $e^-$ and $e^+$ with $e^-\cup e^+$. Hence, 
there is at most one other directed edge $\hat{e}$ such that $\mfc/\hat{e} =
\mfc'$, namely $\hat{e} = (e^+,e^-)$.

In fact, $\mfc/e = \mfc/\hat{e}$ if and only if the edge $\{e^-,e^+\}$ of
$\mfc$ is a bridge of $G_{\Pi(\mfc)}$. If the edge is a bridge between sets $A$
and $B$, and the edge is oriented from $A$ to $B$ in some acyclic
$(k+1)$-partition $\mcC \in \mfc$, then we can fire $A$ to obtain a chip firing
equivalent acyclic $(k+1)$-partition which differs from $\mcC$ only on the
orientation of this edge. It follows that $\mfc/e = \mfc/\hat{e}$. Similarly,
if $\mfc/e = \mfc/\hat{e} = \mfc'$, then fix $\mcC \in \mfc'$. We can lift
$\mcC$ to $\mfc$ by introducing the edge $\{e^-,e^+\}$ in either
orientation, so the two resulting acyclic $(k+1)$-partitions are chip
firing equivalent. By Remark~\ref{rem:edge reversal}, we may obtain one acyclic
$(k+1)$-partition from the other by iteratively turning sinks into sources by
reversing edges. Since no edge other than $e$ is reversed at the end of this
process, it follows that $e$ does not participate in any cycles. Thus, binomial
entries in $\delta_{1,k}$ correspond to bridges of partition graphs, i.\,e., cuts of $G$.
\end{remark}

\section{Minimal free resolution of $M_G$}\label{sec:M_G}

We will now define the complex $\mcF_0(G)$, the minimal free resolution for
$M_G$. As we will show later in this section, in the case when $G$ is a tree,
$\mcF_0(G)$ is a Koszul complex.

For $k$ from $0$ to $n-1$, define the $k\nth$ homological degree of $\mcF_0(G)$
to be the free module
\[
F_{0,k} = \bigoplus_{\mcC}R(-D(\mcC))
\]
where the direct sum is taken over all $n$-acyclic $(k+1)$-partitions $\mcC$ of
$G$, which are identified with the standard basis elements of $F_{0,k}$. Now we
define differentials $\delta_{0,k}: F_{0,k} \rightarrow F_{0,k-1}$ of
$\mcF_0(G)$ by the equations
\begin{equation}\label{diff_form M_G}
\delta_{0,k}(\mcC)=\sum_{e} \sign_{\mcC}(e)m_{\mcC}(e)\cdot (\mcC/e)
\end{equation}
where the sum is taken over contractible edges of $\mcC$. Then $\mcF_0(G)$ is
the sequence
\[
\mcF_0(G): F_{0,n-1} \xrightarrow{\delta_{0,n-1}}\cdots 
\xrightarrow{\delta_{0,2}}F_{0,1} \xrightarrow{\delta_{0,1}} F_{0,0}.
\]

We emphasize that the essential difference between $\mcF_1(G)$ and $\mcF_0(G)$
is that in the differentials of the latter, the sum is taken only over
contractible edges of an $n$-acyclic $(k+1)$-partition, rather than over all
contractible edges of a chip-firing equivalence class of acyclic partitions.
Since edges only appear in one orientation in the $n$-acyclic partition of
chip-firing equivalence class, the condition in Equation~\eqref{eq:sign prop2}
is no longer relevant. For the maps $\sign_{\mcC}$, we require
only Property~\eqref{eq:sign prop1} to hold.  Finally, we note that whereas $\mcF_1(G)$
was graded by $\bbZ^n/\Lambda_G$, the complex $\mcF_0(G)$ is graded by
$\bbN^{n-1}$.
%% and hence has a torus action.

\begin{example}\label{MG_ex}
For the ``kite graph'' $G$ depicted in Figure~\ref{fig:kite graph}, the complex
$\mcF_0(G)$ reads as follows:
\[
\mcF_0(G): {R}^4  \xrightarrow{\delta_{0,3}} {R}^9
\xrightarrow{\delta_{0,2}}  {R}^6    \xrightarrow{\delta_{0,1}}   {R}^1.
\]
The matrices of differentials are
\begin{align*}
\delta_{0,1} &=\left(\begin{array}{rrrrrr}
x_{1}^{3} & x_{2}^{2} & x_{3}^{2} & x_{1}^{2} x_{2} & x_{1}^{2} x_{3} & x_{1}
x_{2} x_{3}
\end{array}\right)\\
\delta_{0,2} &=\left(\begin{array}{rrrrrrrrr}
0 & - x_{2} & 0 & - x_{3} & 0 & 0 & 0 & 0 & 0 \\
- x_{3}^{2} & 0 & - x_{1}^{2} & 0 & 0 & 0 & 0 & 0 & - x_{1} x_{3} \\
x_{2}^{2} & 0 & 0 & 0 & - x_{1}^{2} & 0 & - x_{1} x_{2} & 0 & 0 \\
0 & x_{1} & x_{2} & 0 & 0 & - x_{3} & 0 & 0 & 0 \\
0 & 0 & 0 & x_{1} & x_{3} & 0 & 0 & - x_{2} & 0 \\
0 & 0 & 0 & 0 & 0 & x_{1} & x_{3} & x_{1} & x_{2}
\end{array}\right)\\
\delta_{0,3} &=\left(\begin{array}{rrrr}
0 & 0 & 0 & x_{1} \\
x_{3} & 0 & 0 & 0 \\
0 & x_{3} & 0 & 0 \\
- x_{2} & 0 & 0 & 0 \\
0 & 0 & x_{2} & 0 \\
x_{1} & x_{2} & 0 & 0 \\
0 & 0 & - x_{1} & x_{2} \\
- x_{1} & 0 & x_{3} & 0 \\
0 & - x_{1} & 0 & - x_{3}
\end{array}\right).
\end{align*}

The basis elements of the free modules in $\mcF_1(G)$ correspond to
the six $4$-acyclic $2$-partitions, nine $4$-acyclic $3$-partitions, and four
$4$-acyclic $4$-partitions, in the order (from left to right) depicted in
Figure~\ref{fig:kite partitions}.  \qed
\end{example}

The second main result of this paper is that $\mcF_0(G)$ is a minimal free
resolution of $M_G$.

\begin{theorem}\label{thm:M_G}
$\mcF_0(G)$ is a minimal free resolution of $M_G$.
\end{theorem}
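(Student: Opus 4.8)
The plan is to show that $\mcF_0(G)$ is a free resolution of $M_G$ and that it is minimal. First I would verify that $\mcF_0(G)$ is a complex and that $\operatorname{coker}\delta_{0,1} = R/M_G$, exactly paralleling Lemma~\ref{lem:I_G complex}. The fact that $\delta_{0,k}\circ\delta_{0,k+1}=0$ follows from the same combinatorial identity $m_{\mcC}(e)m_{\mcC/e}(g) = m_{\mcC}(g)m_{\mcC/g}(e)$ used there, invoking Proposition~\ref{prop:sign exist} (now only Property~\eqref{eq:sign prop1} is needed, since edges appear in a single orientation in an $n$-acyclic partition) together with Lemma~\ref{mutually contractible} to produce the common lift $\hat{\mcC}$ and the unique edge $g$. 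For the cokernel computation, the image of $\delta_{0,1}$ is generated by the monomials $\bfx^{D(\mcC)}$ over $n$-acyclic $2$-partitions $\mcC$; each such $\mcC$ is an orientation of a cut $\{S,\overline S\}$ with $S,\overline S$ connected, oriented toward the part containing $n$, so $\bfx^{D(\mcC)} = \bfx^{S\to\overline S}$, and I would check these are precisely a (minimal) generating set for $M_G$ using the definition of $M_G$ and the characterization of $G$-parking functions recalled in the Preliminaries.

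The substantive content is \emph{exactness}. Here I would follow the route advertised in the overview: exploit the $\bbN^{n-1}$-grading on $\mcF_0(G)$. Since the complex is multigraded and each differential is given by monomials, for each multidegree $\mathbf{a}\in\bbN^{n-1}$ the degree-$\mathbf{a}$ strand $(\mcF_0(G))_{\mathbf{a}}$ is a finite complex of $\bbK$-vector spaces, and exactness of $\mcF_0(G)$ in positive homological degree is equivalent to acyclicity of every such strand. The plan is to identify, for a fixed multidegree, which basis elements $\mcC$ (and which contractible edges) contribute, and to show the resulting vector-space complex is acyclic. I expect this to reduce to a statement about a simplicial or cellular complex built from the contractible edges available at a given degree, whose reduced homology vanishes; the natural tool is to decompose the strand as a direct sum of (truncated) Koszul-type complexes, each of which is acyclic, matching the strategy described for $\mcF_0(G)$ in the overview (Subsections~\ref{subsec:reduction} and~\ref{subsec:max_star}).

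Granting exactness, \emph{minimality} is the easy part: a multigraded free resolution is minimal precisely when every entry of every differential lies in the maximal ideal $\langle x_1,\dots,x_n\rangle$. By construction each nonzero entry of $\delta_{0,k}$ is a nonconstant monomial $m_{\mcC}(e)=\bfx^{D(\mcC)-D(\mcC/e)}$; this exponent is nonzero because $\outd_{\mcC}(v)$ strictly increases for $v$ in the part created by contracting $e$ (contraction adds the edges of the cut between $e^-$ and $e^+$ to the out-degree count), so no unit entries occur. Hence the resolution is minimal and the Betti numbers are read off as the ranks $\beta_k(R/M_G)=\#\{\,n\text{-acyclic }(k+1)\text{-partitions}\,\}=\sum_{\Pi\in\mcP_{k+1}}\alpha(\Pi)$, consistent with Theorem~\ref{bettinumbers_theo}. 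The main obstacle is thus entirely in the exactness argument: correctly organizing the degree-$\mathbf{a}$ strands and exhibiting the Koszul decomposition that forces their acyclicity.
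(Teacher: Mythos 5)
Your outer skeleton is fine: the complex/cokernel verification does parallel Lemma~\ref{lem:I_G complex} exactly as you say, and your minimality argument is the paper's (every entry $m_{\mcC}(e)=\bfx^{D(\mcC)-D(\mcC/e)}$ is a nonconstant monomial, since $D(\mcC)-D(\mcC/e)=\sum_{u\in e^-}\sum_{v\in e^+}a_{uv}\,u\neq 0$ whenever the edge $(e^-,e^+)$ exists in $G_{\Pi}$). But there is a genuine gap in the exactness step, which you yourself flag with ``I expect this to reduce to\dots'': the reduction to multidegree strands is valid but is the trivial part, and the hard content --- showing that, for each $\mathbf{a}\in\bbN^{n-1}$, the strand spanned by the $n$-acyclic partitions $\mcC$ with $D(\mcC)\le\mathbf{a}$ is acyclic --- is never carried out. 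This statement is exactly the Bayer--Sturmfels acyclicity criterion for the subcomplexes $\Part(G)_{\le\mathbf{a}}$ of the supporting CW complex, and the paper never proves it directly; Section~\ref{cwcomp_sect} establishes the cellular structure only \emph{after} exactness is known. Moreover, a single strand retains \emph{all} contractible edges whose labels divide $\bfx^{\mathbf{a}}$, and there is no evident direct-sum decomposition of such a strand into truncated Koszul complexes; your hoped-for decomposition is not exhibited and does not follow from anything stated earlier in the paper.

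You also misattribute your strategy to the paper's overview: the paper does \emph{not} work strand by strand. Its reduction (Lemma~\ref{vectred_theo}) shows it suffices to check split exactness of just the $n-1$ fiber complexes $(\mcF_0)_{P_j}\otimes\kappa(P_j)$ at the coordinate primes $P_j=\langle x_1,\dots,x_{j-1},x_{j+1},\dots,x_{n-1}\rangle$ --- i.e., the complexes obtained by setting $x_j\mapsto 1$ and all other variables to $0$ --- and this reduction is itself nontrivial, using Nakayama's lemma, the torus-invariance of the non-exactness locus on $\mathrm{Proj}\,R$, and the Peskine--Szpiro acyclicity lemma. The point of this particular collapse is that it kills every term of the differential except the contractions of $j$-edges, and it is precisely this that makes the complex split as a direct sum over maximal $j$-stars (Lemma~\ref{vectcomex_theo}), each star being a tree so that its complex is a localized Koszul complex, exact by Lemma~\ref{thm:tree}. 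Your strand-level complexes do not enjoy this collapse, so to complete your proposal you would either have to prove acyclicity of every $\Part(G)_{\le\mathbf{a}}$ directly (a substantial combinatorial-topology argument not in the paper) or replace your reduction by the paper's localization-plus-fiber argument.
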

The proof that  $\mcF_0(G)$ is a complex of free $R$-modules and the
cokernel of $\delta_{0,1}$ is equal to $R/M_G$ proceeds along the lines of  the
analogous statement for  $\mcF_1(G)$.  We will complete the proof of
Theorem \ref{thm:M_G} in Section \ref{F0exact_sect} where we establish the
exactness of  $\mcF_0(G)$. In the rest of this section, we will show that
for any tree $T$, the complex $\mcF_0(T)$ is isomorphic to the Koszul complex.
Since $M_T$ is the irrelevant ideal $\langle x_1,\dots,x_{n-1} \rangle$ it
follows that $\mcF_0(T)$ is a minimal free resolution of $M_T$.

\begin{lemma}\label{lem:koszul sign}
Let $R = \bbK[x_1,\ldots,x_n]$ and let
\[
\mcK: K_n \xrightarrow{\delta_n} \cdots \xrightarrow{\delta_2} K_1
\xrightarrow{\delta_1} K_0
\]
be the Koszul complex for $(x_1,\ldots,x_n)$. Suppose $\delta_i': K_{i+1} \to
K_i$ are differentials which agree with $\delta_i$ as monomial matrices up to
the signs of their entries. Then for some collection $B$ of basis elements of
each of the $K_i$, we can obtain $\delta$ from $\delta'$ by composing with the
map sending each element of $B$ to its negative.  In particular, the complex
given by the $\delta_i'$ is exact.
\end{lemma}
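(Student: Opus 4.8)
The plan is to recognize that the hypothesis forces $\delta'$ to be a \emph{gauge-equivalent} copy of the genuine Koszul differential, and then to produce the gauge explicitly. Index the basis of $K_i=\bigwedge^i R^n$ by the subsets $S\subseteq[n]$ with $|S|=i$, so that the $(S',S)$-entry of $\delta$ is nonzero exactly when $S'=S\setminus\{j\}$ for some $j\in S$, in which case it equals $\pm x_j$. By hypothesis the corresponding entry of $\delta'$ is then $\pm x_j$ as well, differing from that of $\delta$ only by a sign $\eta(S',S)\in\{\pm1\}$. Looking for a diagonal change of basis $\phi_i=\operatorname{diag}(\sigma(S))$ with $\sigma(S)\in\{\pm1\}$ that intertwines the two complexes amounts to solving $\delta_i\phi_i=\phi_{i-1}\delta_i'$, which entry by entry (after cancelling the nonzero monomial $\pm x_j$) becomes the single requirement
\[
\sigma(S')=\eta(S',S)\,\sigma(S)
\]
for every covering pair $S'\subset S$. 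Thus the entire problem reduces to finding a vertex signing $\sigma$ on the graph $Q_n$ whose vertices are all subsets of $[n]$ and whose edges join sets differing in one element, compatible with the prescribed edge signs $\eta$. The collection $B$ demanded in the statement is then simply $B=\{S:\sigma(S)=-1\}$.

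First I would show such a $\sigma$ exists by checking that the edge signing $\eta$ is a coboundary, i.e.\ that its product around every cycle of $Q_n$ is $+1$. Since the cycle space of the hypercube $Q_n$ is generated by its square $2$-faces, it suffices to verify this on each elementary $4$-cycle $S,\,S\cup a,\,S\cup\{a,b\},\,S\cup b$ with $a,b\notin S$. Here the decisive input is that both $\delta$ and $\delta'$ are \emph{differentials}. The vanishing of the $(S,S\cup\{a,b\})$-entry of $\delta^2$ forces the two length-two descending paths from $S\cup\{a,b\}$ to $S$ to carry opposite signs (they both contribute $\pm x_a x_b$), whence the product of the four $\delta$-signs around the square equals $-1$; the identical argument applied to $(\delta')^2$ gives $-1$ for the $\delta'$-signs. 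Because the edge value $\eta$ is the product of the $\delta$- and $\delta'$-signs on that edge, the product of $\eta$ around the square is $(-1)(-1)=+1$, as required.

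With balance established around every $4$-cycle, it holds around all cycles, so one may fix $\sigma(\emptyset)=1$ and propagate the relation $\sigma(S')=\eta(S',S)\sigma(S)$ along a spanning tree of $Q_n$; the choice is consistent on the remaining (non-tree) edges precisely because of the balance just verified, giving a well-defined signing $\sigma$. The maps $\phi_i=\operatorname{diag}(\sigma(S))$ then assemble into an isomorphism of complexes from the $\delta'$-complex to $\mcK$, which is exactly the basis change negating the elements of $B=\{S:\sigma(S)=-1\}$; this yields the first assertion. Since $(x_1,\dots,x_n)$ is a regular sequence in $R$, the Koszul complex $\mcK$ is exact in positive homological degrees, and transporting this through the isomorphism shows the $\delta_i'$-complex is exact as well.

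I expect the sign bookkeeping in the middle step to be the main obstacle: one must be careful that the hypothesis that $\delta_i'$ is a differential, namely $(\delta')^2=0$, together with the same fact for $\delta$, is exactly what trivializes the $4$-cycle products of $\eta$. Without the complex condition on $\delta'$ no compatible gauge $\sigma$ need exist, so this is precisely where that assumption is consumed; the remaining ingredients (the hypercube identification and the fact that squares generate its cycle space) are standard.
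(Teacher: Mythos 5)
Your proposal is correct, but it follows a genuinely different route from the paper's proof. The paper argues by induction on homological degree: assuming a sign-change $\phi$ has been built through degree $i-1$ with $\delta_{i-1}'\circ\phi = \delta_{i-1}$, it takes a column $u$ of $\phi\circ\delta_i'$, notes that $u$ is killed by $\delta_{i-1}$ (this is where $(\delta')^2=0$ enters), invokes exactness of the genuine Koszul complex $\mcK$ to write $u$ as an $R$-linear combination of columns of $\delta_i$, and then uses two structural facts --- the nonzero entries of $u$ are degree-one monomials agreeing with the corresponding Koszul column $v$ up to sign, and each variable appears at most once in each row of $\delta_i$ --- to force $u=\pm v$, extending $\phi$ to degree $i$. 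You instead recast the whole problem globally as a switching (gauge) problem on the hypercube $Q_n$: the sign discrepancy $\eta$ is an edge signing, a compatible vertex signing $\sigma$ exists if and only if $\eta$ is balanced on cycles, the cycle space of $Q_n$ is spanned by its square $2$-faces, and on each elementary square the relations $\delta^2=(\delta')^2=0$ force both sign products around the square to equal $-1$, hence $\eta$ multiplies to $+1$ there; a spanning-tree propagation then produces $\sigma$ and the diagonal maps $\phi_i=\operatorname{diag}(\sigma(S))$ at once. Both arguments consume the hypothesis that the $\delta_i'$ form a complex, but yours defers the exactness of $\mcK$ to the final transport step, makes completely explicit where $(\delta')^2=0$ is used, and is non-inductive; it would also generalize to any complex of monomial matrices whose support graph has its cycle space generated by quadrilaterals on which both complexes have full support. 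One point worth recording if you write this up: your cancellation argument on a square needs both length-two descending paths from $S\cup\{a,b\}$ to $S$ to carry nonzero entries, which holds here because the full Koszul complex has full support on covering pairs --- this full-support condition plays the role that the ``each variable occurs at most once per row'' observation plays in the paper's column-comparison step.
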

\begin{proof}
By induction on $i$. Suppose $\phi:K_{i-1}\to K_{i-1}$ is given by sending some
collection of basis elements to their negative, and $\delta_{i-1}'\circ\phi =
\delta_{i-1}$. Write $\phi\circ\delta_i'$ and $\delta_i$ as matrices over $R$.
We wish to show that these matrices are equal up to multiplying some collection
of columns by $-1$.  Let $u$ be the column of $\phi\circ\delta_i'$
corresponding to basis element $e$, and $v$ the $e$-column of $\delta_i$. Since
$\delta_{i-1}'\circ\phi u = \delta_{i-1}u = \vec{0}$, and since $\mcK$ is
exact, it follows that $u$ is an $R$-linear combination of the columns of
$\delta_i$. But the nonzero entries of $u$ are monomials of degree one, which
agree with $v$ up to their sign. Furthermore, by the definition of the Koszul
complex, every variable $x_i$ appears at most once in each row of $\delta_i$.
It follows that $u = \pm v$.
\end{proof}

\begin{lemma}\label{thm:tree}
Let $G$ be a tree. Then $M_G = \langle x_1,\ldots,x_{n-1}\rangle$, and
$\mcF_0(G)$ is a minimal free resolution of $R/M_G$. (In fact,
$\mcF_0(G)$ is a Koszul complex.) 
\end{lemma}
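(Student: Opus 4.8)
The plan is to prove the three assertions in turn: the identity $M_G = \langle x_1,\ldots,x_{n-1}\rangle$, the identification of $\mcF_0(G)$ with a Koszul complex, and the resulting exactness and minimality. Throughout I root the tree $G$ at the sink $n$, and I use that every edge of $G$ has weight one.

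First I would establish $M_G = \langle x_1,\ldots,x_{n-1}\rangle$. The inclusion $\subseteq$ is immediate: for any nonempty $S\subseteq[n-1]$, connectivity of $G$ forces at least one edge from $S$ to $\overline{S}$ (which is nonempty, as $n\in\overline{S}$), so the generator $\bfx^{S\to\overline{S}}$ is divisible by some $x_i$ with $i\in S$. For the reverse inclusion I would exhibit, for each $k\in[n-1]$, a subset $S$ with $\bfx^{S\to\overline{S}}=x_k$. Deleting $k$ from the tree produces $\deg(k)$ components; let $S$ consist of $k$ together with all components \emph{not} containing the sink $n$. The only edge leaving $S$ is then the single tree-edge from $k$ toward the component of $n$, so the exponent of $x_k$ is $1$, while every other vertex of $S$ has all of its neighbors inside $S$ and so contributes exponent $0$. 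Hence $\bfx^{S\to\overline{S}}=x_k$.

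Next I would set up a bijection between the basis of $\mcF_0(G)$ and the basis of a Koszul complex. For a set $E'$ of $k$ edges, the components of $G\setminus E'$ form a connected $(k+1)$-partition whose quotient graph $G_\Pi$ is a tree on $k+1$ vertices with edge set $E'$; since every orientation of a tree is acyclic and a tree admits a unique orientation with a prescribed unique sink (orient all edges toward that vertex), there is exactly one $n$-acyclic structure on $\Pi$. This gives a bijection between $n$-acyclic $(k+1)$-partitions and $k$-subsets of the $n-1$ edges, and incidentally recovers $\alpha(\Pi)=1$ and $\beta_k=\binom{n-1}{k}$. Assigning to each edge its endpoint farther from $n$ (its child endpoint under the rooting) gives a bijection between edges and $[n-1]$, since each non-root vertex is the child endpoint of exactly one edge, namely its parent edge.

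Then I would compute the differential. For a contractible edge $e=(A,B)$ of such a partition $\mcC$ (every inter-block edge is contractible, as contracting it leaves a smaller tree), contraction deletes the corresponding cut edge from $E'$, and since the orientation points toward $n$ the head $B$ is the parent block while the tail $A=e^-$ is the child block. Using $D(\mcC/e)=D(\mcC)-\sum_{u\in A}\sum_{v\in B}a_{uv}u$, only the unique tree-edge $\{a,b\}$ with $a\in A,\ b\in B$ contributes, so $m_\mcC(e)=\bfx^{D(\mcC)-D(\mcC/e)}=x_a$, where $a$ is precisely the child endpoint attached to that edge under the bijection. Thus $\delta_{0,k}$ agrees, as a monomial matrix, with the Koszul differential on the regular sequence $x_1,\ldots,x_{n-1}$ in $R$ up to the signs $\sign_\mcC(e)$, which satisfy the anticommutativity \eqref{eq:sign prop1}. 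Invoking Lemma~\ref{lem:koszul sign} — whose proof uses only exactness of the Koszul complex and that each variable appears at most once per row, and so applies verbatim to $(x_1,\ldots,x_{n-1})$ — shows that $\mcF_0(G)$ is obtained from this Koszul complex by negating a subset of basis elements. Hence $\mcF_0(G)$ is exact; its entries lie in the maximal ideal, so it is minimal, and it resolves $R/\langle x_1,\ldots,x_{n-1}\rangle=R/M_G$.

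The main obstacle is the computation in the last step: pinning down $m_\mcC(e)$ as the \emph{single} variable $x_a$ and choosing the edge–vertex correspondence so that $\mcF_0(G)$ becomes literally a Koszul complex rather than merely one with matching ranks. Once $m_\mcC(e)=x_{a}$ is verified and the rooting-based bijection is in place, the sign bookkeeping is dispatched entirely by Lemma~\ref{lem:koszul sign}.
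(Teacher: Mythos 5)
Your proposal is correct and follows essentially the same route as the paper: both identify the basis elements of $\mcF_0(G)$ with Koszul basis elements for the regular sequence $(x_1,\ldots,x_{n-1})$ — your ``child endpoints of cut edges'' are exactly the paper's set $\Delta_\mcC$ of vertices with an out-edge in $\mcC$ — and both then invoke Lemma~\ref{lem:koszul sign} to dispose of the signs. Your write-up merely supplies details the paper leaves implicit (the computation $m_\mcC(e)=x_a$, the verification that $M_G$ is the irrelevant ideal, and the observation that Lemma~\ref{lem:koszul sign} applies to the sequence $(x_1,\ldots,x_{n-1})$ inside $\bbK[x_1,\ldots,x_n]$), all of which check out.
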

\begin{proof}
If $G$ is a tree, and $\mcC$ is an $n$-acyclic $k$-partition, then let
$\Delta_\mcC$ denote the subset of $\{1,\ldots,n-1\}$ given by those vertices
of $G$ with an out-edge appearing in $\mcC$. Then in characteristic two,
$\mcF_0(G)$ is isomorphic to the Koszul complex via the map sending
$e_{\mcC}$ to the basis element of the Koszul complex corresponding to
$\Delta_{\mcC}$. Use Lemma \ref{lem:koszul sign} and the fact that $R/{\langle
x_1,\ldots,x_{n-1}\rangle}$ is minimally resolved by the Koszul complex to
conclude that $\mcF_0(G)$ minimally resolves $R/M_G$.
\end{proof}

\section{Exactness of $\mcF_0(G)$}\label{F0exact_sect}

In this section we will establish the exactness of $\mcF_0(G)$. In
Subsection \ref{subsec:reduction}, we reduce the  exactness of
$\mcF_0$ to the exactness of certain complexes of vector spaces and in
Subsection \ref{subsec:max_star}, we show that these complexes of vector spaces
are exact. 

\subsection{Reduction to a Complex of Vector Spaces}\label{subsec:reduction}
For a prime ideal $P$ of $R$, denote by $\kappa(P)$ the residue field $R_P/P$
at $P$. 

 \begin{lemma}\label{vectred_theo} Let $P_j$ be the ideal $\langle
 x_1,\dots,x_{j-1},x_{j+1},\dots, x_{n-1} \rangle$ of
 $\mathbb{K}[x_1,\dots,x_{n-1}]$. For the complex $\mcF_0$, the following statements
 are equivalent: 
 \begin{enumerate}
 \item $\mcF_0$ is exact. 
\item  The complexes $(\mcF_0)_{P}$  and $(\mcF_0)_{P} \otimes \kappa(P)$ are
split exact for all prime ideals $P$ of $\mathbb{K}[x_1,\dots,x_{n-1}]$ except
the irrelevant ideal $\langle x_1,\dots,x_{n-1}\rangle$.
\item For each $j$ from $1$ to $n-1$, the complex $(\mcF_0)_{P_j}
\otimes \kappa(P_j)$ is split exact  as a complex of vector spaces.
\end{enumerate}
 \end{lemma}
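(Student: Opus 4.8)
The plan is to prove the equivalence of the three statements by establishing the chain (1) $\Rightarrow$ (2) $\Rightarrow$ (3) $\Rightarrow$ (1), exploiting standard facts about complexes of free modules together with the $\bbN^{n-1}$-grading on $\mcF_0(G)$. First I would handle (1) $\Rightarrow$ (2): if $\mcF_0$ is exact, then because it is a complex of free modules over a polynomial ring whose only nonzero homology (after augmenting by $R/M_G$) sits in the expected spot, localization at any prime $P$ preserves exactness, and exactness of a complex of \emph{free} modules over the local ring $R_P$ forces it to be split exact (a bounded exact complex of finitely generated projectives over a local ring splits). Tensoring a split exact complex with the residue field $\kappa(P)$ again yields a split exact complex of vector spaces, since splitting is preserved under any base change.

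The implication (2) $\Rightarrow$ (3) is immediate: the ideals $P_j$ are among the primes $P \neq \langle x_1,\dots,x_{n-1}\rangle$ covered by statement (2), so (3) is just the special case of (2) at those particular primes. The substance is therefore in (3) $\Rightarrow$ (1), and this is the step I expect to be the main obstacle. Here the strategy is to use the fact that $\mcF_0(G)$ is a finite complex of finitely generated free $R$-modules over $R = \bbK[x_1,\dots,x_n]$, so exactness can be checked by the acyclicity criterion or, more directly in our setting, by examining support. Because $\mcF_0$ is $\bbN^{n-1}$-graded (note the grading is only by the first $n-1$ variables), the homology modules inherit this grading, and I would argue that the homology is supported only at the irrelevant ideal $\langle x_1,\dots,x_{n-1}\rangle$. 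The key point is that the primes $P_j$ are precisely the minimal primes lying just below the irrelevant ideal in codimension one, and split exactness of $(\mcF_0)_{P_j}\otimes\kappa(P_j)$ rules out homology along each coordinate hyperplane complement.

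To make (3) $\Rightarrow$ (1) rigorous, I would proceed by a dimension-counting or depth argument. Since $\mcF_0(G)$ resolves (a candidate for) $R/M_G$ and $M_G \supseteq \langle x_1^{a_1},\dots,x_{n-1}^{a_{n-1}}\rangle$ for suitable exponents $a_i$ (as $M_G$ contains each pure power $x_i^{\deg(i)}$, coming from the partition separating $\{i\}$), the quotient $R/M_G$ has its support contained in the locus $x_1=\dots=x_{n-1}=0$, so the annihilator of every homology module $H_k$ contains a power of each $x_i$ for $i<n$. Consequently any associated prime of $H_k$ contains $\langle x_1,\dots,x_{n-1}\rangle$. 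The hypothesis (3) guarantees that localizing at $P_j$ and passing to the residue field kills homology; combined with Nakayama's lemma over the local ring $(R)_{P_j}$, this shows $(H_k)_{P_j}=0$ for each $j$, so no associated prime of $H_k$ can be contained in any $P_j$. The only prime containing $\langle x_1,\dots,x_{n-1}\rangle$ but contained in no $P_j$ is the irrelevant ideal itself. Thus each $H_k$ is supported only at the irrelevant ideal; but the $\bbN^{n-1}$-grading forces a nonzero graded module supported only at the maximal graded ideal to live in finitely many degrees, which I would then contradict by a Hilbert-series or Euler-characteristic comparison showing the alternating sum of ranks already matches $R/M_G$, leaving no room for positive-dimensional-free homology. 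The delicate part throughout will be correctly tracking that the grading uses only $x_1,\dots,x_{n-1}$ while the ring $R$ carries the extra variable $x_n$, and ensuring the split-exactness hypotheses at the $P_j$ are leveraged through Nakayama rather than through a naive rank count alone.
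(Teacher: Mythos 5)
Your implications $(1)\Rightarrow(2)$ and $(2)\Rightarrow(3)$ match the paper, and your use of Nakayama at $P_j$ to upgrade split exactness of $(\mcF_0)_{P_j}\otimes\kappa(P_j)$ to split exactness of $(\mcF_0)_{P_j}$ (hence $(H_k)_{P_j}=0$) is exactly the first half of the paper's proof of $(3)\Rightarrow(1)$. The first genuine gap is your claim that the annihilator of \emph{every} homology module $H_k$ contains a power of each $x_i$, $i<n$, because $M_G$ does. The pure powers in $M_G$ control only $H_0=R/M_G$; the support of higher homology of a non-exact complex of free modules is not constrained by the support of $H_0$ (over $\bbK[x]$, the complex $0\to R\xrightarrow{0}R\xrightarrow{x}R$ has $H_0$ of finite length but top homology supported everywhere). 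Establishing that the $H_k$ are supported only at the irrelevant ideal is precisely the content that hypothesis $(3)$ must supply, and it is where the paper's key idea lives: sheafify $\mcF_0$ over $\operatorname{Proj}R$, note the non-exactness locus in $\mathbb{P}^{n-2}$ is Zariski closed and invariant under the torus action (because $\mcF_0$ is $\bbN^{n-1}$-graded), so if nonempty it contains a coordinate point, whose vanishing ideal is some $P_j$ --- contradicting $(H_k)_{P_j}=0$. You gesture at the right substitute when you say the grading ``rules out homology along each coordinate hyperplane complement''; the algebraic version that would repair your argument is that associated primes of a finely graded module are monomial primes, and every monomial prime of $\bbK[x_1,\dots,x_{n-1}]$ other than the irrelevant ideal omits some $x_j$ and hence is contained in $P_j$. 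But as written, your support step rests on the false annihilator claim rather than on this.

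The second gap is the endgame. Even once each $H_k$ is known to have finite length, your proposed Hilbert-series/Euler-characteristic contradiction does not close the argument: the identity $\sum_k(-1)^k\operatorname{HS}(F_{0,k})=\operatorname{HS}(R/M_G)$ is not available at this stage --- it is essentially equivalent to the Betti-number statement being proved, so invoking it is circular --- and even granting it, an alternating-sum identity only forces $\sum_{k\ge 1}(-1)^k\operatorname{HS}(H_k)=0$, which permits graded cancellation between nonzero homology modules in different homological degrees and so cannot conclude $H_k=0$ for each $k$. The paper instead finishes with the acyclicity lemma of Peskine and Szpiro \cite[Lemma 20.11]{Eisen95}: since each $H_k$ has Krull dimension zero, it has depth zero, while $\mcF_0$ has length at most $\operatorname{depth}R$ and each $F_{0,k}$ has depth at least $k$; the lemma then forces all homology to vanish. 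Your first gap is repairable by the graded-associated-primes argument above, but the Hilbert-series step must be replaced outright by a depth-type criterion of this kind.
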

 \begin{proof}
 
($1 \Rightarrow 2$) Since exactness is a local property and $M_G$ is an Artinian
monomial ideal, the complex $(\mcF_0)_{P}$ is,  in fact,  split exact for all prime
ideals $P$.  Hence,  for all $i \ge 1$ the modules ${\rm Tor}^{i}((\mcF_0)_{P},\kappa(P))$
are zero. This shows that  $(\mcF_0)_{P}  \otimes \kappa(P)$ is also split exact. 

\noindent ($2 \Rightarrow 3$)   Note that $P_j$ is a prime ideal for all integers $j$ from $1$ to $n$. 

\noindent ($3 \Rightarrow 1$)
We first show that if $(\mcF_0)_{P_j}  \otimes \kappa(P_j)$ is split
exact  as a complex of vector spaces over the residue field $\kappa(P_j)$ of
the local ring at $P_j$, then $(\mcF_0)_{P_j}$ is split exact by the
following argument.

Take an element  $b$ in $(F_{0,0})_{P_{j}}$ and consider its projection $b_p$
in  $(F_{0,0})_{P_{j}} \otimes \kappa(P_j)$. Since $(\mcF_0)_{P_j}
\otimes \kappa(P_j)$ is split exact, $b_p$ is contained image of the first
differential of $(\mcF_0)_{P_{j}} \otimes \kappa(P_j)$. Using the map between the
complexes $(\mcF_0)_{P_j}$ and $(\mcF_0)_{P_{j}} \otimes \kappa(P_j)$
and the fact that the natural projection from $(F_{0,0})_{P_{j}}$ to
$(F_{0,0})_{P_{j}} \otimes \kappa(P_j)$ is surjective, we know that there is an
element $c$ in $F_{0,1}$ such that $\delta_{0,1}(c)$ is equal to $b$ modulo
$\mathfrak{m}\cdot (F_{0,0})_{P_{j}}$ where $\mathfrak{m}$ is the unique
maximal ideal of the local ring $R_{P_j}$.  Hence, $
\im\delta_{0,1}+\mathfrak{m}\cdot (F_{0,0})_{P_{j}}=(F_{0,0})_{P_{j}}$.  By
Nakayama's lemma, $\im\delta_{0,0}=(F_{0,0})_{P_{j}}$ and hence,
$\delta_{0,0}$ is surjective with $\ker\delta_{0,1} \oplus
(F_{0,0})_{P_{j}}=(F_{0,1})_{P_j}$. This shows that $\ker\delta_{0,1}$
is a projective module over a local ring and hence $\ker\delta_{0,1}$
is also free.  Thus, we can write $(\mcF_0)_{P_j}$  as a direct sum of
a trivial complex $(F_{0,0})_{P_j} \xrightarrow{id} (F_{0,0})_{P_j}$ and
another complex $\mathcal{F'}$.  We iterate the argument on $\mathcal{F'}$ to
deduce that $(\mcF_0)_{P_j}$ is a trivial complex and is therefore split exact.

We now suppose that $(\mcF_0)_{P_j}$ is split exact and show that the
sheafified complex $\tilde{\mcF_0}$ (over  Proj$R$) is exact. Using
the open property of exactness, we deduce that the set $L$ of points
$\mathbb{P}^{n-2}$ whose stalks are not exact is a Zariski closed set.
Since $L$ is Zariski closed and since the module $M_G$ is invariant under
the torus action, $L$ is also invariant under the action of the algebraic torus 
$(\overline{\mathbb{K}}^{*})^{n-1}$.  Every closed set invariant under the torus
action is the zero set of a monomial ideal and hence, if nonempty, it contains
one of the coordinate points $e_j$, and indeed the vanishing ideal of $e_j$ is
$P_j$.  Hence, we deduce that the stalk of $\tilde{\mcF_0}$ at every closed
point of $\mathbb{P}^{n-2}$ is exact and hence $\tilde{\mcF_0}$ is exact. Thus,
the support of the homology modules of $\mcF_0$ is either empty or the
irrelevant maximal ideal $\langle x_1,\dots,x_n \rangle$. Hence, the Krull
dimension of the nonzero homology module of $\mcF_0$ is zero (since the Krull
dimension of a module is by definition the dimension of its support).  Since
the depth is at most the Krull dimension, the depth of the homology modules of
$\mcF_0$ are all zero.  We now note the hypothesis for the acyclicity lemma of
Peskin and Szpiro \cite[Lemma 20.11]{Eisen95} are all satisfied:  we have a
graded ring $R$,  the complex $\mcF_0$ has length at most the depth of $R$ and
the depth of $F_{0,k}$ is at least $k$ (actually in this case $F_{0,k}$ has
depth $n$). Hence, we deduce that the homology modules of $\mcF_0$ are all zero
and $\mcF_0$ is exact.
\end{proof} 

Note that the matrix of the differentials of $(\mcF_0)_{P_j}  \otimes
\kappa(P_j)$ can be obtained by substituting one for $x_j$ and zero for all
other indeterminates in the corresponding matrix of differentials of $\mcF_0$. 

\subsection{Exactness of the Complex of Vector Spaces}\label{subsec:max_star}

In this section, we show that $(\mcF_0)_{P_j} \otimes \kappa(P_j)$ is exact
for any $j$ from $1$ to $n-1$. To that end, we will decompose
$(\mcF_0)_{P_j} \otimes \kappa(P_j)$ into a direct sum of complexes of
vector spaces arising from localized Koszul complexes. In fact, the Koszul complexes
will be the complexes $\mcF_0(H)$ for certain star graphs $H$ which
we will now define.

Consider the differential $\delta_{k,j}^* = (\delta_{0,k})_{P_j} \otimes \kappa(P_j)$
induced from the differential $\delta_{0,k}$ of $\mcF_0$.  We represent
$\delta_{k,j}^*$ as a zero-one matrix with respect to the
standard basis elements with every one in the matrix corresponding to merging
vertices $V_r$ and $V_q$ of an $n$-acyclic $(k+1)$-partition $\mcC$ such that: (i)
$j \in V_r$, and (ii) any edge of $G$ with one vertex in $V_r$ and the other
vertex in $V_q$ is incident on $j$. Say an edge $(V_r,V_q)$ of $\mcC$ is a
\emph{$j$-edge} if it is contractible, and satisfies (i) and (ii), and consider
the subgraph $\mathcal{S}$ of $\mcC$ composed of the $j$-edges, and all vertices incident
with these edges.  Thus, $\mathcal{S}$ is a star, and every nonzero entry in the
$e_\mcC$-column of $\delta_{k,j}^*$ corresponds to an edge of
$\mathcal{S}$. We say $\mathcal{S}$ is the \emph{$j$-star} associated with $\mcC$, and denote it by
$\mathcal{S}(\mcC)$. It is important to clarify that $\mcC$ is part of the data of
$\mathcal{S}(\mcC)$; even if the $j$-edges of two distinct $n$-acyclic
$(k+1)$-partitions $\mcC$ and $\mcC'$ are identical, we do not identify
$\mathcal{S}(\mcC)$ with $\mathcal{S}(\mcC')$.

If $e_\mcC$ appears in a term in the image of $\delta_{k+1,j}^*$, it follows
that $\mcC$ is obtained by contracting a $j$-edge of some $n$-acyclic
$(k+1)$-partition $\mcC'$. In fact, there is a unique maximal such refinement
of $\mcC$. 

\begin{proposition}\label{prop:max j refinement}
Let $\mcC$ be an $n$-acyclic $k$-partition, and let $\ell$ be maximal such that
there exists an $n$-acyclic $\ell$-partition $\mcC'$ with $\mcC$ obtained from
$\mcC'$ be a sequence of contractions of $j$-edges. Then $\mcC'$ is the unique
such $n$-acyclic $\ell$-partition.
\end{proposition}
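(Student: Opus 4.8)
The plan is to prove both existence (which is given by the maximality hypothesis) and uniqueness of the maximal $n$-acyclic $\ell$-partition $\mcC'$ refining $\mcC$ via $j$-edge contractions. The key is to understand exactly which edges of $\mcC$ can be ``un-contracted'' by splitting a vertex $V$ of $\Pi(\mcC)$ into two sets $V_r, V_q$ connected by a $j$-edge. Recall that a $j$-edge $(V_r, V_q)$ requires (i) $j \in V_r$ and (ii) every edge of $G$ between $V_r$ and $V_q$ is incident on $j$. First I would observe that the refinements of $\mcC$ by $j$-edges are controlled entirely by the local structure around the vertex $j$: only the block $V \in \Pi(\mcC)$ containing $j$ can ever be split, since condition (i) forces the tail of any $j$-edge to contain $j$, and after a sequence of such contractions the merged vertex is precisely the one containing $j$.

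\textbf{Reducing to the structure of the block containing $j$.} Let $V$ be the block of $\Pi(\mcC)$ with $j \in V$. Any $n$-acyclic $\ell$-partition $\mcC'$ refining $\mcC$ by $j$-edge contractions agrees with $\mcC$ on all blocks other than $V$, and refines $V$ into a collection of sets $W_0, W_1, \dots, W_{\ell - k}$ with $j \in W_0$, such that contracting the corresponding $j$-edges recovers $V$. Condition (ii) is the crucial constraint: each $W_i$ for $i \ge 1$ must be joined to the rest of $V$ only through edges incident on $j$. In other words, in the induced subgraph $G[V]$, removing the vertex $j$ must disconnect each $W_i$ ($i \ge 1$) from the others. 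I would therefore argue that the maximal refinement corresponds to taking the connected components of $G[V \setminus \{j\}]$, with $W_0 = \{j\}$ together with... — but here one must be careful, because the $j$-edges must themselves form an acyclic orientation consistent with $\mcC$, and each $W_i$ must induce a connected subgraph of $G$.

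\textbf{Identifying the canonical maximal refinement.} The natural candidate for $\mcC'$ is obtained as follows: split $V$ by removing $j$, take the connected components $C_1, \dots, C_m$ of $G[V \setminus \{j\}]$, and set the blocks to be $\{j\}, C_1, \dots, C_m$, orienting each $j$-edge $(\{j\}, C_i)$ in agreement with the orientation of $\mcC$ (as an $n$-acyclic partition, the sink constraint pins down the orientations). Maximality holds because no $C_i$ can be split further by a $j$-edge: a $j$-edge splitting $C_i$ would require $j \in C_i$, contradicting $j \notin C_i$. For uniqueness, I would show any maximal refinement must coincide with this one. The essential point is that the relation ``$u, v$ lie in the same block, not equal to $\{j\}$'' is forced to respect connectivity in $G[V \setminus \{j\}]$: if two vertices of $V \setminus \{j\}$ lie in the same connected component of $G[V \setminus\{j\}]$, they cannot be separated into different $W_i$'s, since the separating $j$-edge would violate condition (ii). Conversely, distinct components must be separated at maximality. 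This forces the block partition, and then Lemma~\ref{unique n-acyclic} forces the orientation, giving uniqueness of $\mcC'$.

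\textbf{Main obstacle.} The hard part will be the interplay between the combinatorial splitting (connected components of $G[V \setminus \{j\}]$) and the acyclicity/connectivity requirements that a legitimate $n$-acyclic $\ell$-partition must satisfy. Specifically, I must verify that the candidate maximal refinement is genuinely acyclic, that each block induces a connected subgraph of $G$, and that the $j$-edges can be simultaneously oriented to make $\{j\}$ the unique source within the star while preserving the global unique-sink-at-$n$ property. The subtle case is when separating a component requires a sequence of $j$-edge contractions rather than a single one, and ensuring that maximality in $\ell$ coincides exactly with the full decomposition into components of $G[V \setminus \{j\}]$ rather than some coarser partition. I expect the cleanest route is to characterize the maximal refinement intrinsically via these connected components and then invoke Lemma~\ref{unique n-acyclic} to collapse the ambiguity in orientation, reducing uniqueness to the already-established uniqueness of $n$-acyclic representatives within a chip-firing class.
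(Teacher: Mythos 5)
Your reduction to the block $V\ni j$ and the first half of your uniqueness argument are sound: since a $j$-edge must have its tail containing $j$ and all crossing $G$-edges incident on $j$, every block $W_i$ ($i\ge 1$) split off from $V$ is exactly one connected component of $G[V\setminus\{j\}]$, so any refinement by $j$-edge contractions is determined by a set of components together with orientation data. But your canonical candidate --- splitting off \emph{all} components, with $W_0=\{j\}$ --- is wrong, and with it the step ``distinct components must be separated at maximality.'' Because condition (i) forces every contracted edge to point \emph{out of} the block containing $j$, each split-off component $C_i$ receives its star edge from the $j$-block, so its only possible out-edges go to blocks $B$ outside $V$ whose edge carries the orientation $V\to B$ in $\mcC$. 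A component with $n\notin C_i$ and no such outward external neighbor becomes a second sink, destroying $n$-acyclicity. Concretely, take $G$ the path $1-2-3$ with $n=3$, $j=2$, and $\mcC\colon\{1,2\}\to\{3\}$: the component $\{1\}$ of $G[V\setminus\{2\}]$ cannot be split off (orienting $(\{2\},\{1\})$ makes $\{1\}$ a sink, while orienting $(\{1\},\{2\})$ violates condition (i) for the contraction), so the maximal refinement is $\mcC$ itself, not your predicted $\{2\},\{1\},\{3\}$. The same phenomenon is visible in Figure~\ref{fig:stars}, where most $4$-acyclic partitions of the kite graph fail to be maximal refinements. The correct maximal refinement splits off exactly those components that contain $n$ or have an external neighbor $B$ with $V\to B$ in $\mcC$; one then verifies that this set (and each of its subsets) is genuinely realizable --- acyclicity of the refined partition and contractibility at every intermediate step follow because any offending directed cycle or path would collapse, upon re-merging the internal blocks into $V$, to a positive closed directed walk in the acyclic $\mcC$. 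This resolves the ``main obstacle'' you flagged but left open, and it is where the actual content of the proposition lies.

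Your final appeal to Lemma~\ref{unique n-acyclic} to pin down the orientation is also broken: that lemma says each \emph{chip-firing class} contains a unique $n$-acyclic partition, but two $n$-acyclic orientations of the same partition graph generally lie in different chip-firing classes --- indeed $\alpha(\Pi)>1$ is exactly why Theorem~\ref{bettinumbers_theo} sums over orientations, and Figure~\ref{fig:kite partitions} exhibits distinct $4$-acyclic $3$-partitions with identical blocks. So knowing $\Pi(\mcC')$ does not determine $\mcC'$, and your claim that ``the sink constraint pins down the orientations'' (in agreement with $\mcC$, where the internal star edges do not even exist) does not parse. What actually forces the orientation is the contraction mechanics: contraction preserves the orientations of surviving edges, so every edge of $\mcC'$ between a piece of $V$ and an external block $B$ must carry $\mcC$'s orientation of $\{V,B\}$, and condition (i) forces each internal star edge to leave the block containing $j$. (For reference, the paper states Proposition~\ref{prop:max j refinement} without proof, so there is no argument in the text to compare against; the above is what is needed to fill the gap.)
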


Now suppose $\mcC'$ is the unique maximal refinement of $\mcC$ by $j$-edges, as
in Proposition~\ref{prop:max j refinement}. If $\mcC' = \mcC$, we say the star
$S(\mcC)$ is \emph{maximal}. In any case, if $\mcC$ is obtained from $\mcC'$ by
contracting a collection $E$ of edges, then $\mathcal{S}(\mcC)$ is obtained from
$\mathcal{S}(\mcC')$ by contracting the same collection of edges.

\begin{figure}[ht] 
\begin{center}
\resizebox{\linewidth}{!}{
\begin{tikzpicture}[scale=1.3]
\definecolor{lgray}{rgb}{0.7,0.7,0.7}
\SetVertexMath
\GraphInit[vstyle=Art]
\SetUpVertex[MinSize=3pt]
\SetVertexLabel
\SetUpEdge[color=lgray]
\tikzstyle{every node}=[font=\small]

%%4-acyclic 2-partitions
\tikzset{VertexStyle/.style = {%
shape = circle,
shading = ball,
ball color = black,
inner sep = 1.5pt
}}
\Vertex[LabelOut,Lpos=180, Ldist=.05cm,x=0,y=5,L={1}]{a}
\Vertex[LabelOut,Lpos=180, Ldist=.05cm,x=0,y=4.2,L={2,3,4}]{b}
\Edge[color=black,style={-triangle 45}](a)(b)

%%4-acyclic 3-partitions
\Vertex[LabelOut,Lpos=0, Ldist=.05cm,x=2.5,y=5]{1}
\Vertex[LabelOut,Lpos=180, Ldist=.05cm,x=1.5,y=5]{2}
\tikzset{VertexStyle/.style = {%
shape = circle,
shading = ball,
ball color = lgray,
inner sep = 1.5pt
}}
\Vertex[LabelOut,Lpos=270, Ldist=.05cm,x=2,y=4.2,L={3,4}]{34}
\Edge[color=black,style={-triangle 45}](1)(2)
\Edge[style={-triangle 45}](1)(34)
\Edge[style={-triangle 45}](2)(34)

\Vertex[LabelOut,Lpos=180, Ldist=.05cm,x=4,y=5]{2}
\tikzset{VertexStyle/.style = {%
shape = circle,
shading = ball,
ball color = black,
inner sep = 1.5pt
}}
\Vertex[LabelOut,Lpos=0, Ldist=.05cm,x=5,y=5]{1}
\Vertex[LabelOut,Lpos=270, Ldist=.05cm,x=4.5,y=4.2,L={3,4}]{34}
\Edge[style={-triangle 45}](2)(1)
\Edge[style={-triangle 45}](2)(34)
\Edge[color=black,style={-triangle 45}](1)(34)

\Vertex[LabelOut,Lpos=180, Ldist=.05cm,x=6.5,y=5]{1}
\Vertex[LabelOut,Lpos=0, Ldist=.05cm,x=7.5,y=5,L={3}]{3}
\tikzset{VertexStyle/.style = {%
shape = circle,
shading = ball,
ball color = lgray,
inner sep = 1.5pt
}}
\Vertex[LabelOut,Lpos=270, Ldist=.05cm,x=7,y=4.2,L={2,4}]{24}
\Edge[color=black,style={-triangle 45}](1)(3)
\Edge[style={-triangle 45}](1)(24)
\Edge[style={-triangle 45}](3)(24)

\Vertex[LabelOut,Lpos=0, Ldist=.05cm,x=10,y=5,L={3}]{3}
\tikzset{VertexStyle/.style = {%
shape = circle,
shading = ball,
ball color = black,
inner sep = 1.5pt
}}
\Vertex[LabelOut,Lpos=180, Ldist=.05cm,x=9,y=5]{1}
\Vertex[LabelOut,Lpos=270, Ldist=.05cm,x=9.5,y=4.2,L={2,4}]{24}
\Edge[style={-triangle 45}](3)(1)
\Edge[style={-triangle 45}](3)(24)
\Edge[color=black,style={-triangle 45}](1)(24)

%%4-acyclic 4-partitions
\Vertex[LabelOut,Lpos=90, Ldist=.05cm,x=0.75,y=3]{1}
\Vertex[LabelOut,Lpos=180, Ldist=.05cm,x=0.25,y=2.5]{2}
\Vertex[LabelOut,Lpos=0, Ldist=.05cm,x=1.25,y=2.5]{3}
\tikzset{VertexStyle/.style = {%
shape = circle,
shading = ball,
ball color = lgray,
inner sep = 1.5pt
}}
\Vertex[LabelOut,Lpos=270, Ldist=.05cm,x=0.75,y=2]{4}
\Edge[color=black,style={-triangle 45}](1)(2)
\Edge[color=black,style={-triangle 45}](1)(3)
\Edge[style={-triangle 45}](1)(4)
\Edge[style={-triangle 45}](2)(4)
\Edge[style={-triangle 45}](3)(4)

\Vertex[LabelOut,Lpos=180, Ldist=.05cm,x=2.75,y=2.5]{2}
\Vertex[LabelOut,Lpos=270, Ldist=.05cm,x=3.25,y=2]{4}
\tikzset{VertexStyle/.style = {%
shape = circle,
shading = ball,
ball color = black,
inner sep = 1.5pt
}}
\Vertex[LabelOut,Lpos=90, Ldist=.05cm,x=3.25,y=3]{1}
\Vertex[LabelOut,Lpos=0, Ldist=.05cm,x=3.75,y=2.5]{3}
\Edge[style={-triangle 45}](2)(1)
\Edge[color=black,style={-triangle 45}](1)(3)
\Edge[style={-triangle 45}](1)(4)
\Edge[style={-triangle 45}](2)(4)
\Edge[style={-triangle 45}](3)(4)

\Vertex[LabelOut,Lpos=90, Ldist=.05cm,x=5.75,y=3]{1}
\Vertex[LabelOut,Lpos=180, Ldist=.05cm,x=5.25,y=2.5]{2}
\tikzset{VertexStyle/.style = {%
shape = circle,
shading = ball,
ball color = lgray,
inner sep = 1.5pt
}}
\Vertex[LabelOut,Lpos=0, Ldist=.05cm,x=6.25,y=2.5]{3}
\Vertex[LabelOut,Lpos=270, Ldist=.05cm,x=5.75,y=2]{4}
\Edge[color=black,style={-triangle 45}](1)(2)
\Edge[style={-triangle 45}](3)(1)
\Edge[style={-triangle 45}](1)(4)
\Edge[style={-triangle 45}](2)(4)
\Edge[style={-triangle 45}](3)(4)

\Vertex[LabelOut,Lpos=180, Ldist=.05cm,x=7.75,y=2.5]{2}
\Vertex[LabelOut,Lpos=0, Ldist=.05cm,x=8.75,y=2.5]{3}
\tikzset{VertexStyle/.style = {%
shape = circle,
shading = ball,
ball color = black,
inner sep = 1.5pt
}}
\Vertex[LabelOut,Lpos=90, Ldist=.05cm,x=8.25,y=3]{1}
\Vertex[LabelOut,Lpos=270, Ldist=.05cm,x=8.25,y=2]{4}
\Edge[style={-triangle 45}](2)(1)
\Edge[style={-triangle 45}](3)(1)
\Edge[color=black,style={-triangle 45}](1)(4)
\Edge[style={-triangle 45}](2)(4)
\Edge[style={-triangle 45}](3)(4)

\end{tikzpicture}
}
\end{center}
\caption{\label{fig:stars} The maximal $1$-stars associated with the
``kite graph.'' The edges and vertices participating in each star are in black,
and the rest are grayed out. The $4$-acyclic partitions whose $1$-stars are not
maximal are not shown.}
\end{figure}
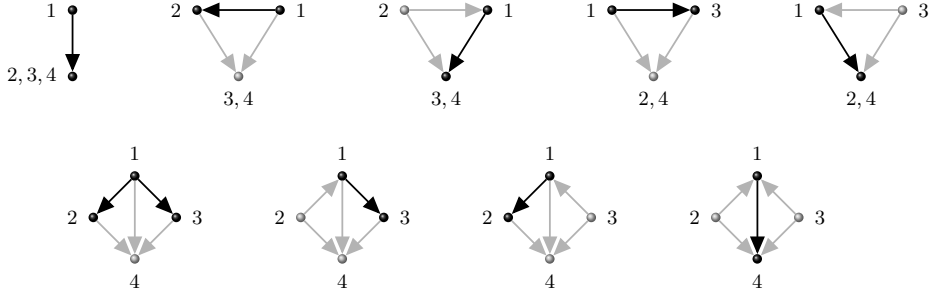

Suppose $\mathcal{S}$ is a maximal $j$-star associated to some $n$-acyclic $(k+1)$-partition
$\mcC'$. We associate to $\mathcal{S}$ a complex $\mcH(\mathcal{S})$ of vectors as follows. Let $m$
be the number of vertices of $\mathcal{S}$, and relabel the vertices of $\mathcal{S}$ with $[m]$ so
that the vertex containing $j$ receives the label $m$. Consider the $G$-parking
function ideal $M_{\mathcal{S}}$ over the ring $\bbK[x_1,\ldots,x_m]$. We define
$\mcH(\mathcal{S})$ to be the complex $\mcF_0(\mathcal{S})_{\langle x_m\rangle}\otimes
\kappa(\langle x_m\rangle)$, with zeroes appended on either end  so that the
$(m-1)\nth$ homological degree of $\mcF_0(\mathcal{S})$ corresponds to the
$k\nth$ homological degree of $\mcH(\mathcal{S})$ (see Example
\ref{maxstar_ex1}).  As a matrix, the map of vectors spaces $\gamma_\ell =
(\delta_{0,\ell})_{\langle x_m\rangle} \otimes \kappa(\langle x_m\rangle)$ is
obtained from the differential map for $\mcF_0(\mathcal{S})$ by replacing all
indeterminates with 1. (The indeterminate $x_m$ never appears in these
differentials.)

To fully specify $\mcF_0(\mathcal{S})$, and hence $\mcH(\mathcal{S})$, over
fields of arbitrary characteristic, we need to define the sign functions
$\sign_{\mcA}$ for every $m$-acyclic $\ell$-partition $\mcA$ of $\mathcal{S}$.
The basis elements for the free module in homological degree $k$ of
$\mcH(\mathcal{S})$ correspond to $j$-stars $\mathcal{S}(\mcC)$ associated
to some $n$-acyclic $k$-partition $\mcC$ refined by $\mcC'$. The contractible
edges of $\mcA$ correspond to $j$-edges of $\mcC$. Thus, we define
$\sign_{\mcA}(e) = \sign_{\mcC}(e)$. Clearly $\sign_{\mcA}(e)$ satisfies
\eqref{eq:sign prop1}, since $\sign_{\mcC}(e)$ does. Now recall that
$\mcF_0(\mathcal{S})$ is exact by Lemma~\ref{thm:tree}. Thus, by
Lemma~\ref{vectred_theo}, $\mcH(S)$ is exact as well.

Let $\mcM = \oplus_{\mathcal{S}} \mcH(\mathcal{S})$, where the direct sum is taken over all
maximal $j$-stars of $G$. Define maps $\phi_k$ from the vector space of
$(\mcF_0)_{P_j}\otimes \kappa(P_j)$ at homological degree $k$ to the corresponding
vector space of $\mcM$ by sending the $n$-acyclic $(k+1)$-partition $\mcC$
to its $j$-star $\mathcal{S}(\mcC)$. These maps are isomorphisms of vector spaces by
construction. In addition, the maps $\phi_k$ commute with the differentials
of $(\mcF_0)_{P_j}\otimes \kappa(P_j)$ and $\mcM$, since edge contractions
in the stars correspond exactly to edge contractions in $G$ giving nonzero
entries in the differentials of $(\mcF_0)\otimes \kappa(P_j)$, and the signs
associated with these contractions are equal.  Hence, we have:

\begin{lemma}\label{vectcomex_theo}
For any integer $j$ from $1,\ldots,n-1$, the complex $(\mcF_0)_{P_j}
\otimes \kappa(P_j)$ is isomorphic as a complex of $\bbK$-vectors spaces to
$\mcM$. In particular, since $\mcH(\mathcal{S})$ is split exact for every maximal
star $\mathcal{S}$, the complex $(\mcF_0)_{P_j}\otimes \kappa(P_j)$ is
split exact as well.
\end{lemma}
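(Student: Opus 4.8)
The plan is to upgrade the degreewise maps $\phi_k$ constructed above into an isomorphism of complexes $(\mcF_0)_{P_j}\otimes\kappa(P_j)\cong\mcM$, and then transport split exactness across it. First I would record that each $\phi_k$ is a bijection on standard bases, and hence a $\bbK$-linear isomorphism in every degree. The key input here is Proposition~\ref{prop:max j refinement}: every $n$-acyclic $(k+1)$-partition $\mcC$ has a \emph{unique} maximal $j$-refinement $\mcC'$, so $\mcC$ determines a single maximal $j$-star $\mathcal{S}=\mathcal{S}(\mcC')$, and inside the summand $\mcH(\mathcal{S})$ it is named by the contracted star $\mathcal{S}(\mcC)$. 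Conversely, each basis vector of each $\mcH(\mathcal{S})$ equals $\mathcal{S}(\mcC)$ for exactly one such $\mcC$. This is what makes $\phi=\bigoplus_k\phi_k$ a well-defined bijection compatible with the block decomposition $\mcM=\bigoplus_{\mathcal{S}}\mcH(\mathcal{S})$.

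Next I would verify that $\phi$ is a chain map, i.e.\ $\phi_{k-1}\circ\delta_{k,j}^{*}=\partial_k\circ\phi_k$, where $\partial_k$ denotes the differential of $\mcM$. Recall that $\delta_{k,j}^{*}$ is a zero-one matrix obtained by setting $x_j=1$ and every other variable to $0$; by the discussion preceding the lemma (conditions (i) and (ii)), its surviving nonzero entries correspond exactly to contractions of $j$-edges of $\mcC$, and each associated monomial $m_{\mcC}(e)$, being a power of $x_j$, becomes $1$. Two points then need checking. The first, which I expect to be the crux, is that contracting a $j$-edge $e$ (sending $\mcC$ to $\mcC/e$) cannot leave a fixed maximal-star block: since $\mcC'$ refines $\mcC$, and hence $\mcC/e$, by $j$-edges and is maximal for $\mcC$, the uniqueness in Proposition~\ref{prop:max j refinement} forces $\mcC'$ to also be the maximal $j$-refinement of $\mcC/e$. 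Thus the reduced differential is block-diagonal with respect to the partition of the basis by maximal $j$-star, and no cross-star entries occur. The second point is that, within a single block, the differential matches that of $\mcH(\mathcal{S})$: the $j$-edges of $\mcC$ are precisely the contractible edges of the corresponding $m$-acyclic partition $\mcA$ of $\mathcal{S}$, the monomials all collapse to $1$ under the substitution, and the signs agree because $\sign_{\mcA}(e)$ was \emph{defined} to equal $\sign_{\mcC}(e)$, so that property~\eqref{eq:sign prop1} is inherited.

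Finally I would conclude and transport exactness. For each maximal $j$-star $\mathcal{S}$ the underlying graph is a tree, so by Lemma~\ref{thm:tree} the complex $\mcF_0(\mathcal{S})$ is a Koszul complex and in particular exact; localizing at $\langle x_m\rangle$ and tensoring with the residue field preserves split exactness by Lemma~\ref{vectred_theo}, so each $\mcH(\mathcal{S})$ is split exact. A finite direct sum of split exact complexes is split exact, hence $\mcM$ is split exact, and pushing this through the isomorphism $\phi$ shows $(\mcF_0)_{P_j}\otimes\kappa(P_j)$ is split exact as well. The only genuine labor lies in the middle paragraph---confirming that the maximal-star block structure is respected by the reduced differential and that the transported sign conventions are consistent---whereas the first and last steps amount to assembling results already in place.
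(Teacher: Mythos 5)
Your proposal is correct and takes essentially the same route as the paper's own proof: the same degreewise bijections $\phi_k$ sending an $n$-acyclic partition $\mcC$ to the star $\mathcal{S}(\mcC)$ inside the block of its unique maximal $j$-refinement (Proposition~\ref{prop:max j refinement}), the same transported sign convention $\sign_{\mcA}(e)=\sign_{\mcC}(e)$ to inherit Property~\eqref{eq:sign prop1}, and the same appeal to Lemma~\ref{thm:tree} together with Lemma~\ref{vectred_theo} to get split exactness of each summand $\mcH(\mathcal{S})$ and hence of the direct sum. Your middle paragraph simply makes explicit the block-diagonality of the reduced differential that the paper asserts in one sentence (``edge contractions in the stars correspond exactly to edge contractions in $G$''), so it is a welcome elaboration rather than a different argument.
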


Using Lemma~\ref{vectcomex_theo} and Lemma~\ref{vectred_theo}, we complete
the proof of Theorem~\ref{thm:M_G}.

\begin{proof}[Proof of Theorem~\ref{thm:M_G}]
As we observed in the discussion following the statement of
Theorem~\ref{thm:M_G}, $\mcF_0(G)$ is a complex and the cokernel of
$\delta_{0,1}$ is $R/M_G$. By Lemma~\ref{vectcomex_theo}, we know that
$(\mcF_0)_{P_j} \otimes \kappa(P_j)$ is split exact for every integer $j$ with
$1 \le j \le n-1$. It then follows by Lemma~\ref{vectred_theo} that $\mcF_0(G)$
is exact, so that $\mcF_0(G)$ is a free resolution for $M_G$. Furthermore,
since the image of $\delta_{0,k}$ is in $\langle
x_1,\ldots,x_{n-1}\rangle F_{0,k-1}$ for every integer $k$, it follows that
$\mcF_0(G)$ is minimal.
\end{proof}

As a corollary to maximal star decomposition obtained in
Theorem~\ref{vectcomex_theo}, we obtain another formula for Betti numbers of
$M_G$:
\begin{corollary}
Fix an integer $j$ from $1,\dots,n-1$. For integers $0 \le s,t  \le n$, let
$q_{s,t}$ denote the number of maximal $j$-stars of $G$ having $t$
vertices and corresponding to $n$-acyclic $(s+1)$-partitions. Then for every
integer $0 \le k \le n$, we have
$\beta_k(R/I_G)=\beta_k(R/M_G)=\sum_{r=k}^{n-1}\sum_{s=1}^{n-1}q_{r,s}
\binom{s-1}{k}$.
\end{corollary}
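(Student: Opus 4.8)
The identity follows by reading off, for a fixed $j$, the dimension of each homological degree of the complex of vector spaces $(\mcF_0)_{P_j}\otimes\kappa(P_j)$ and matching it against the maximal star decomposition. I would first reduce to the computation of $\beta_k(R/M_G)$ alone: the equality $\beta_k(R/I_G)=\beta_k(R/M_G)$ is Theorem~\ref{bettinumbers_theo}. Since $\mcF_0(G)$ is a \emph{minimal} free resolution of $M_G$ by Theorem~\ref{thm:M_G}, we have $\beta_k(R/M_G)=\operatorname{rank}_R F_{0,k}$, and localizing a free module at $P_j$ and tensoring with the residue field preserves its rank. Hence $\beta_k(R/M_G)=\dim_{\kappa(P_j)}\bigl[(\mcF_0)_{P_j}\otimes\kappa(P_j)\bigr]_k$, the dimension of the $k\nth$ term of the complex of vector spaces.

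Next I would invoke Lemma~\ref{vectcomex_theo}, which supplies an isomorphism $(\mcF_0)_{P_j}\otimes\kappa(P_j)\cong\bigoplus_{\mathcal{S}}\mcH(\mathcal{S})$ of complexes of vector spaces, the sum taken over all maximal $j$-stars $\mathcal{S}$. Taking dimensions in homological degree $k$ gives
\[
\beta_k(R/M_G)=\sum_{\mathcal{S}}\dim_{\kappa(P_j)}\mcH(\mathcal{S})_k .
\]
It then remains to evaluate each summand. Fix a maximal $j$-star $\mathcal{S}$ with $s$ vertices, associated by Proposition~\ref{prop:max j refinement} to a unique $n$-acyclic $(r+1)$-partition $\mcC'$. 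By Lemma~\ref{thm:tree}, $\mcF_0(\mathcal{S})$ is the Koszul complex on the $s-1$ non-sink variables, so its $\ell\nth$ free module has rank $\binom{s-1}{\ell}$, and localization and tensoring preserve these dimensions. In the construction of $\mcH(\mathcal{S})$ the top degree $\ell=s-1$ of $\mcF_0(\mathcal{S})$, which corresponds to the fully refined star (the partition $\mcC'$ itself), is placed in homological degree $r$; thus the degree-$(r-d)$ term of $\mcH(\mathcal{S})$ has dimension $\binom{s-1}{s-1-d}=\binom{s-1}{d}$. Combinatorially, an $n$-acyclic $(k+1)$-partition refining to $\mcC'$ is obtained by contracting a subset of the $s-1$ star edges, and contracting exactly $d=r-k$ of them yields the correct number of parts, so $\dim\mcH(\mathcal{S})_k=\binom{s-1}{r-k}$.

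Finally I would group the maximal $j$-stars according to the pair $(r,s)$, where $r+1$ is the size of the associated $n$-acyclic partition and $s$ is the number of vertices of the star; by definition there are $q_{r,s}$ of them. Summing the per-star contributions yields the asserted closed form, with the summation range $r\ge k$ expressing precisely the requirement $d=r-k\ge 0$ that a nonnegative number of star edges be contracted. The step that requires genuine care—and the main obstacle—is the bookkeeping of the homological shift built into $\mcH(\mathcal{S})$: one must verify that the Koszul rank $\binom{s-1}{\ell}$ lands in the homological degree of $\mcH(\mathcal{S})$ indicated above, and that the reindexing by the contraction count $d$ produces exactly the binomial coefficient appearing in the statement. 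Everything else is a routine dimension count over the split exact pieces $\mcH(\mathcal{S})$ provided by Lemma~\ref{vectcomex_theo}.
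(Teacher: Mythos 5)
Your route is exactly the paper's: the corollary is stated there without separate proof, as an immediate consequence of the maximal-star decomposition, and your chain of reductions---minimality of $\mcF_0(G)$ (Theorem~\ref{thm:M_G}) giving $\beta_k(R/M_G)=\operatorname{rank}_R F_{0,k}$, preservation of rank under localization at $P_j$ and passage to $\kappa(P_j)$, the direct-sum decomposition of Lemma~\ref{vectcomex_theo}, and the Koszul ranks $\binom{s-1}{\ell}$ from Lemma~\ref{thm:tree} shifted so that homological degree $s-1$ of $\mcF_0(\mathcal{S})$ sits in homological degree $r$ of $\mcH(\mathcal{S})$---is precisely the intended dimension count, and you carry it out correctly, including the identification of degree-$k$ basis elements of $\mcH(\mathcal{S})$ with contractions of $d=r-k$ of the $s-1$ star edges.

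The one genuine flaw is your final step: you correctly derive $\dim\mcH(\mathcal{S})_k=\binom{s-1}{r-k}$, but then assert that summing ``yields the asserted closed form,'' which contains $\binom{s-1}{k}$. These do not agree, and in fact the statement as printed is erroneous while your formula is the correct one. A sanity check against the paper's own Example~\ref{maxstar_ex1} (kite graph, $j=1$) settles it: there $q_{1,2}=1$, $q_{2,2}=4$, $q_{3,2}=3$, $q_{3,3}=1$, so at $k=0$ the printed formula returns $\sum_{r,s}q_{r,s}\binom{s-1}{0}=9$, the total number of maximal $1$-stars, whereas $\beta_0(R/M_G)=1$; your formula gives $q_{1,2}\binom{1}{1}=1$, and likewise reproduces $\beta_1=6$, $\beta_2=9$, $\beta_3=4$ from the displayed decomposition. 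So the corollary should read $\beta_k(R/I_G)=\beta_k(R/M_G)=\sum_{r=k}^{n-1}\sum_{s=1}^{n-1}q_{r,s}\binom{s-1}{r-k}$ (the lower limit $r\ge k$ then being automatic, since $\binom{s-1}{r-k}=0$ unless $0\le r-k\le s-1$). Your mathematics is sound; the write-up should flag and correct the misprinted binomial rather than claim agreement with it.
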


We conclude this section with an example of the maximal star decompositions.

\begin{example}\label{maxstar_ex1}
Consider again the ``kite graph'' $G$ depicted in Figure~\ref{fig:kite graph}. 
Let $j=1$. The maximal $j$-stars associated with $G$ are depicted in
Figure~\ref{fig:stars}. The $j$-stars corresponding to each of the $4$-acyclic
$4$-partitions are all maximal. One of these has two edges, and the others have
only one. For the $j$-star $\mathcal{S}$ with two edges, the corresponding
complex $\mcH(\mathcal{S})$ of vector spaces is
\[
0 \rightarrow \bbK^1 \xrightarrow{\Delta_2} \bbK^2 \xrightarrow{\Delta_1} \bbK^1 \rightarrow 0,
\]
where
\[
\Delta_1 = \begin{pmatrix} 1 \\ -1 \end{pmatrix}\qquad \textrm{ and } \qquad \Delta_2 =
\begin{pmatrix} 1 & 1 \end{pmatrix}.
\]
The complexes corresponding to the other maximal stars are $0 \rightarrow
\bbK^1 \rightarrow \bbK^1 \rightarrow 0$. Of the $j$-stars associated with each
of the $4$-acyclic $3$-partitions, only four are maximal, and each of these has
exactly one edge.  Finally, there is exactly one maximal $j$-star associated
with a $4$-acyclic $2$-partitions.

The resulting direct sum decomposition of $(\mcF_0(G))_{P_1} \otimes
\kappa(P_1)$ is the following:
\begin{align*}
 \hspace{-0.1cm} &0 \rightarrow \bbK^4 \rightarrow \bbK^{9} \rightarrow
 \bbK^{6} \rightarrow \bbK^1 \rightarrow 0=\\
  1(&0 \rightarrow \bbK^1 \rightarrow \bbK^2 \rightarrow \bbK^1 \rightarrow 0 \rightarrow 0 ) \\
  \oplus\,\, 3(&0 \rightarrow \bbK^1 \rightarrow \bbK^1 \rightarrow 0 \rightarrow  0 \rightarrow 0) \\
  \oplus\,\, 4(&0 \rightarrow 0 \rightarrow  \bbK^1 \rightarrow \bbK^1 \rightarrow 0 \rightarrow 0) \\
  \oplus\,\, 1(&0 \rightarrow 0 \rightarrow  0 \rightarrow \bbK^1 \rightarrow
  \bbK^1  \rightarrow
  0).
\end{align*} \qed
\end{example}

\section{Exactness of $\mcF_1(G)$: A Gr\"obner Degeneration of Complexes}\label{F1exact_sect}

We know from \cite{CorRosSal02} that $M_G$ is an initial ideal of $I_G$.  From
Gr\"obner basis theory,  there is an integral weight function that realizes
this total order \cite[Chapter 15]{Eisen95}.  As remarked in \cite{ManStu12},
integral weight functions that realize the degeneration from $I_G$ to $M_G$
arise naturally from potential theory on the graph and one such choice is the
function $b_{q}$ studied in \cite{BakSho11}. In this section, we show the
stronger property that the minimal free resolution of $I_G$ also Gr\"obner
degenerates to the minimal free resolution of $M_G$.  As a consequence, $I_G$ and $M_G$
share the coarse Betti numbers. Geometrically,  both $I_G$ and $M_G$ lie in the same
subscheme of the Hilbert scheme that corresponds to varieties that share the
same Betti numbers.

Let $y \in \im_{\bbZ^n}(\Lambda_G)$ have $y_i > 0$ for all $i \ne n$. Let the
integral weight vector $\lambda \in \bbZ^n$ be a solution to the equation
$\Lambda_G \lambda = y$ such that $\lambda_i>0$ for all $i$ from 1 to $n$. Note
that a solution $\lambda$ with these properties is guaranteed since
$(1,\dots,1)$ is in the kernel of $\Lambda_G$.  For example, if we take $y$ to
be an integral multiple of the vector $(1,\dots,1,-(n-1))$ that lies in
$\im_{\bbZ^n}(\Lambda_G)$, the corresponding integral weight vector $\lambda$
is, up to scaling, the vector $b_{q}$ in \cite{BakSho11}.  An important
property of the weight vector $\lambda$, as we will see in
Lemma~\ref{lem:0fiber}, is that the weight $\lambda \cdot D(\mcC)$ of the
divisor corresponding to an $n$-acyclic $k$-partition $\mcC$ is uniquely
maximized among all acyclic $k$-partitions $\mcC'$ that are chip firing
equivalent to $\mcC$, following the analogous result for acyclic orientations
of $G$ in \cite[Theorem 4.14]{BakSho11}.
 
Given an equivalence class $\mfc$ of acyclic $k$-partitions,
let $\eps_\mfc = \lambda\cdot D(\mcC)$, where $\mcC \in \mfc$ is $n$-acyclic
and $D(\mcC)$ is viewed as an element of $\bbZ^n$. Given a monomial $m = \bfx^u
e_{\mfc} \in F_{1,k}$, we define the weight $w(m) = \lambda\cdot u + \eps_\mfc$. 
We define $\mcF_t(G)$ to be the Gr\"obner degeneration, i.\,e.,~the
homogenization, of $\mcF_1(G)$ with respect to the integral weight function
$w$, exactly as described in \cite[Chapter 8, Section 3]{MilStu05}.  More
precisely, $\mcF_t(G)$ is the $(\bbZ^n/\im_{\bbZ^n}\Lambda_G) \times \bbZ$-graded complex
whose $k\nth$ homological degree is the free $R[t]$-module
\[
F_{t,k} = \bigoplus_{\mfc} R[t]\tilde{e}_{\mfc}.
\]
Here, the sum is taken over all equivalence classes $\mfc$ of acyclic
$(k+1)$-partitions, and the basis element $\tilde{e}_{\mfc}$ has degree
$(\eps_\mfc, w(\eps_\mfc))$ for any $\mcC \in \mfc$. The $k\nth$ differential
$\delta_{t,k} : F_{t,k} \to F_{t,k-1}$ is defined by 
\begin{equation}\label{eq:delta^t}
\delta_{t,k}(\tilde{e}_{\mfc}) = t^{\eps_{\mfc}}\sum_f \sign_{\mfc}(f)
t^{-w(m_{\mfc}(f)e_{\mfc/f})}m_{\mfc}(f)\tilde{e}_{\mfc/f}.
\end{equation}
where the sum is taken over all contractible edges $f$ of $\mcC$.  Being the
homogenization of the complex $\mcF_{1}(G)$, the sequence $\mcF_t(G)$ is
automatically a complex of $R[t]$-modules. The condition  $\lambda_i>0$ ensures
a positive grading (as defined in \cite[Chapter 8.3]{MilStu05}). Note that
under the evaluation map $t \mapsto 1$, the complex $\mcF_t(G)$ becomes
$\mcF_1(G)$.  In other words, if we consider $\mcF_t(G)$ as a family of
$R$-complexes, then the fiber over $(t-1)$ is $\mcF_1(G)$.

\begin{example}
For the ``kite graph'' $G$, consider the integral weight vector $\lambda=(5,6,5,2)$ with the indeterminate $t$ having weight two.
The complex $\mcF_t(G)$ is as follows:
\[
0  \rightarrow {R[t]}^4  \xrightarrow{\delta_{t,3}} {R[t]}^9
\xrightarrow{\delta_{t,2}}  {R[t]}^6    \xrightarrow{\delta_{t,1}}   {R[t]}^1
\rightarrow 0.
\]
The matrices of differentials are
%\resizebox{\linewidth}{!}{
\begin{align*}
\delta_{t,1} &=\left(\begin{array}{rrrrrr}
x_{1}^{3}- x_{2} x_{3} x_{4} t  & x_{2}^{2}- x_{1} x_{4} t 
& x_{3}^{2} - x_{1} x_{4} t & x_{1}^{2} x_{2}- x_{3} x_{4}^{2} t^{2} 
& x_{1}^{2} x_{3} - x_{2} x_{4}^{2} t^{2} & x_{1} x_{2} x_{3}- x_{4}^{3} t^{3}
\end{array}\right)\\
\delta_{t,2} &=\left(\begin{array}{rrrrrrrrr}
0 & - x_{2} & - x_{4} t & - x_{3} & - x_{4} t & 0 & 0 & 0 & 0 \\
-  x_{3}^{2} + x_{1} x_{4} t  & - x_{3} x_{4} t & - x_{1}^{2} & 0 & 0 & 0 & 0 & -
x_{4}^{2} t^{2} & - x_{1} x_{3} \\
x_{2}^{2} - x_{1} x_{4} t  & 0 & 0 & - x_{2} x_{4} t & - x_{1}^{2} & -
  x_{4}^{2} t^{2} & - x_{1} x_{2} & 0 & 0 \\
  0 & x_{1} & x_{2} & 0 & 0 & - x_{3} & - x_{4} t & 0 & 0 \\
  0 & 0 & 0 & x_{1} & x_{3} & 0 & 0 & - x_{2} & - x_{4} t \\
  0 & 0 & 0 & 0 & 0 & x_{1} & x_{3} & x_{1} & x_{2}
  \end{array}\right)\\
\delta_{t,3} &=\left(\begin{array}{rrrr}
- x_{4} t & 0 & 0 & x_{1} \\
x_{3} & 0 & - x_{4} t & 0 \\
0 & x_{3} & 0 & x_{4} t \\
- x_{2} & - x_{4} t & 0 & 0 \\
0 & 0 & x_{2} & - x_{4} t \\
x_{1} & x_{2} & 0 & 0 \\
0 & 0 & - x_{1} & x_{2} \\
- x_{1} & 0 & x_{3} & 0 \\
0 & - x_{1} & 0 & - x_{3}
\end{array}\right).
\end{align*}
%}

Under the grading induced by the homogenization, this complex
$\mcF_t(G)$ is the minimal free resolution of the cokernel of
$\delta_{t,0}$.  Substituting $t=1$ in the entries of the differentials gives the
minimal resolution of $I_{G}$ as shown in Example~\ref{IG_ex} and
substituting $t=0$ gives the minimal free resolution of $M_G$ as shown in
Example \ref{MG_ex}.  This property holds for any graph, and we exploit these
properties to prove the exactness of $\mcF_1(G)$.
\qed
\end{example}

For any nonzero $t_0  \in \mathbb{K}$, the fiber $\mcF_{t_0}(G)$ over $(t-t_0)$ is
isomorphic as a graded complex to $\mcF_1(G)$. In particular, the map which sends
$x_i$ to $t_0^{\lambda_i}x_i$ and $e_{\mfc}$ to ${t_0}^{\eps_{\mfc}}e_{\mfc}$
is an isomorphism.  We now show that the fiber over $(t)$ is $\mcF_0(G)$. 
\begin{lemma}\label{lem:0fiber}
$\mcF_t(G)/t\mcF_t(G)$ is isomorphic as an $R$-complex to $\mcF_0(G)$.
\end{lemma}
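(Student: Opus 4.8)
\emph{The plan is as follows.} Since $t$ is sent to $0$, the quotient $\mcF_t(G)/t\mcF_t(G)$ has $k\nth$ term $\bigoplus_\mfc R\,\overline{e}_\mfc$, the sum over classes $\mfc$ of acyclic $(k+1)$-partitions, where $\overline{e}_\mfc$ is the image of $\tilde e_\mfc$. For each $\mfc$ let $\mcC_\mfc$ be its unique $n$-acyclic representative (Lemma~\ref{unique n-acyclic}), so $\eps_\mfc=\lambda\cdot D(\mcC_\mfc)$. The assignment $\overline{e}_\mfc\mapsto e_{\mcC_\mfc}$ is then an isomorphism of free $R$-modules onto the $k\nth$ term of $\mcF_0(G)$, and it remains to match differentials. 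I would first choose $\sign_{\mcC_\mfc}$ to be the restriction of $\sign_\mfc$ to the edges appearing in $\mcC_\mfc$; this is legitimate because $\mcF_0(G)$ requires only~\eqref{eq:sign prop1}, which $\sign_\mfc$ satisfies.

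Reducing~\eqref{eq:delta^t} modulo $t$ kills exactly those terms whose $t$-exponent $a_\mfc(f):=\eps_\mfc-w(m_\mfc(f)e_{\mfc/f})$ is positive. So the core of the proof is to show that $a_\mfc(f)\ge 0$ for every contractible edge $f$ of $\mfc$, with equality precisely when $f$ is a contractible edge of the $n$-acyclic representative $\mcC_\mfc$. Granting this, the surviving terms of $\delta_{t,k}(\overline e_\mfc)$ are indexed by the contractible edges $e$ of $\mcC_\mfc$. Here I would record the short fact that $\mcC_\mfc/e$ is again $n$-acyclic: if $e=(A,B)$ with $B$ the sink-part, contractibility forces every out-edge of $A$ to land in $B$ (otherwise a path from an out-neighbour of $A$ back to the sink produces a cycle after contraction), so $A\cup B$ becomes the unique sink; in all other cases the sink is unchanged. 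Hence $\mcC_\mfc/e=\mcC_{\mfc/e}$, so $\overline e_{\mfc/e}$ is identified with $e_{\mcC_\mfc/e}$, and since $m_\mfc(e)=\bfx^{D(\mcC_\mfc)-D(\mcC_\mfc/e)}=m_{\mcC_\mfc}(e)$ and the signs agree, the reduced map reproduces $\delta_{0,k}(\mcC_\mfc)$ term by term.

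The heart of the matter is the computation of $a_\mfc(f)$, which I would carry out by an \emph{un-contraction} construction. Writing $E_f=\sum_{u\in f^-}\sum_{v\in f^+}a_{uv}u$, we have $m_\mfc(f)=\bfx^{E_f}$ and $w(m_\mfc(f)e_{\mfc/f})=\lambda\cdot E_f+\eps_{\mfc/f}$. Starting from $\mcC_{\mfc/f}$, split the part $f^-\cup f^+$ back into $f^-$ and $f^+$ and insert the edge $f$ oriented $f^-\to f^+$, obtaining a partition $\mcB$. One checks that $\mcB$ is acyclic—a cycle through the new edge would project to a cycle of $\mcC_{\mfc/f}$—and that $\mcB/f=\mcC_{\mfc/f}$, so $D(\mcB)=D(\mcC_{\mfc/f})+E_f$ and therefore $\lambda\cdot D(\mcB)=w(m_\mfc(f)e_{\mfc/f})$. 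Moreover $\mcB\in\mfc$: choosing any $\mcC'\in\mfc$ containing $f$, the contraction identity $D(\mcC/e)=D(\mcC)-E$ gives $D(\mcC')-D(\mcB)=D(\mcC'/f)-D(\mcB/f)$, and since $\mcC'/f\sim\mcC_{\mfc/f}=\mcB/f$, Lemma~\ref{lem:projection to partition} and its converse yield $\mcC'\sim\mcB$. Now the stated maximality property of $\lambda$—that $\lambda\cdot D(\cdot)$ is \emph{uniquely} maximized over $\mfc$ at $\mcC_\mfc$—gives $a_\mfc(f)=\lambda\cdot D(\mcC_\mfc)-\lambda\cdot D(\mcB)\ge 0$, with equality iff $\mcB=\mcC_\mfc$, i.e.\ iff $f$ is a contractible edge of $\mcC_\mfc$. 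This is exactly the desired claim, and as a bonus the nonnegativity confirms $\mcF_t(G)$ is a genuine homogenization.

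I expect the only delicate points to be the two well-definedness checks inside the un-contraction step, namely the acyclicity of $\mcB$ and the identity $[\mcB]=\mfc$; both reduce to bookkeeping with the projection $p$ and the contraction formula $D(\mcC/e)=D(\mcC)-E$ already recorded in Section~\ref{prelims}. Everything else—the module identification, the agreement of monomials and signs, and the $n$-acyclicity of $\mcC_\mfc/e$—is routine once the weight identity $a_\mfc(f)=\eps_\mfc-\lambda\cdot D(\mcB)$ is in hand.
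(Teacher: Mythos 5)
Your proposal reconstructs the paper's argument almost step for step—the identification of the $k\nth$ terms via the unique $n$-acyclic representative, the un-contraction of $f$ onto $\mcC_{\mfc/f}$ (your $\mcB$ is the paper's $\hat{\mcC}$, built exactly as in Remark~\ref{binsyz_rem}), and the weight identity $w(m_\mfc(f)e_{\mfc/f}) = \lambda\cdot D(\mcB)$—but it has a genuine gap at its heart: you invoke ``the stated maximality property of $\lambda$'' (that $\lambda\cdot D(\cdot)$ is uniquely maximized over $\mfc$ at the $n$-acyclic representative) as if it were an established fact. In the paper this property is only \emph{announced} in the preamble of Section~\ref{F1exact_sect} with the words ``as we will see in Lemma~\ref{lem:0fiber}''; that is, it is proved \emph{inside} the very proof you are writing, so citing it is circular. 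Nor can you outsource it to \cite[Theorem 4.14]{BakSho11}: that result concerns acyclic orientations of $G$ itself (the case of $n$-partitions) for the particular weight $b_q$, whereas the lemma needs the statement for all $k$-partitions and for any $\lambda$ satisfying $\Lambda_G\lambda = y$ with $y_i>0$ for $i\ne n$; one cannot simply pass to the quotient graph $G_{\Pi}$, because $\lambda$ is not constant on the parts, so $\lambda\cdot D(\mcC)$ is not a function of the projected divisor $p(D(\mcC))$.

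The missing step is short but is the only place where the defining property of $\lambda$ is used, so a proof that never touches $\Lambda_G\lambda = y$ cannot be complete. If $\mcA\in\mfc$ is $n$-acyclic and $\mcB\in\mfc$ is not, then by Remark~\ref{rem:edge reversal} the divisor $D(\mcB)$ is obtained from $D(\mcA)$ by a sequence of firings of parts not containing $n$, so there is a nonzero $\sigma\in\bbN^n$ with $\sigma_n=0$ and $\sigma\Lambda_G = D(\mcA)-D(\mcB)$, whence
\[
\lambda\cdot\bigl(D(\mcA)-D(\mcB)\bigr) \;=\; \sigma^T\Lambda_G\lambda \;=\; \sigma^T y \;>\;0
\]
since $y_i>0$ for all $i\ne n$. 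With this inserted, your argument closes, and the surrounding bookkeeping in your write-up is correct—indeed in places more careful than the paper's (the explicit equality-case analysis, the check that $\mcC_\mfc/e$ is again $n$-acyclic, and the verification $\mcB\in\mfc$). One milder soft spot: for $\mcB\in\mfc$ you lean on the converse of Lemma~\ref{lem:projection to partition}, which the paper asserts but explicitly declines to prove (``we shall not need it''); the paper instead obtains the membership of the lift from Remark~\ref{binsyz_rem}, and you should either do the same or supply a proof of that converse.
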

\begin{proof}
We must show that $t$ divides exactly those monomials of
$\delta_{t,k}(\tilde{e}_{\mfc})$ corresponding to edges $f$ of $\mfc$ which do
not appear in the $n$-acyclic $(k+1)$-partition $\mcA \in \mfc$. 

Let $f$ be a contractible edge of $\mfc$, and let $\mcC \in \mfc/f$ be
$n$-acyclic. Let $\hat{\mcC} \in \mfc$ be the acyclic $k$-partition obtained
from $\mcC$ by introducing the edge $f$ (with its given orientation), as
explained in Remark \ref{binsyz_rem}. Note that $\hat{\mcC}$ is not in general
$n$-acyclic. Now by definition, $m_{\mfc}(f) = \bfx^{D(\hat{\mcC}) - D(\mcC)}$,
and it follows that $w(m_{\mfc}(f)e_{\mfc/f}) = \lambda\cdot D(\hat{\mcC})$.

Thus, it suffices to show that if $\mcA \in \mfc$ is $n$-acyclic and $\mcB \in
\mfc$ is not, then $\lambda\cdot D(\mcA) > \lambda \cdot D(\mcB)$. Note that
the divisor $D(\mcB)$ is obtained from $D(\mcA)$ by a sequence of vertex
firings not including $n$ (c.\,f.~Remark~\ref{rem:edge reversal}). Thus, there
is some nonzero $\sigma \in \bbN^n$ with $\sigma_n = 0$ such that $\sigma
\Lambda_G = D(\mcA) - D(\mcB)$. Hence,
\[
\lambda\cdot(D(\mcA) - D(\mcB)) = \sigma^T \Lambda_G \lambda = \sigma^T\cdot y
> 0
\]
as required. 
\end{proof}

\begin{remark}
Note that Lemma \ref{lem:0fiber} crucially uses the property that the weight of
an $n$-acyclic $k$-partition $\mcC$ is uniquely maximized among acyclic
$k$-partitions $\mcC'$ which are chip firing equivalent to $\mcC$.
\end{remark}

As a result of Lemma~\ref{lem:0fiber}, it follows that $\mcF_t(G)$
is exact.
\begin{theorem}\label{thm:I_G^t}
Let $I_G^t$ denote the homogenization of $I_G$ with respect to the weight
function $w$. The sequence 
\[
\mcF_t(G): 0 \rightarrow F_{t,n-1} \xrightarrow{\delta_{t,n-1}}\cdots 
\xrightarrow{\delta_{t,2}}F_{t,1} \xrightarrow{\delta_{t,1}} F_{t,0}
\]
defined by Equation~\eqref{eq:delta^t} is a minimal free resolution of $R/I_G^t$.
\end{theorem}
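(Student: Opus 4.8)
The plan is to deduce the exactness of $\mcF_t(G)$ in positive homological degrees from the exactness of its special fiber $\mcF_0(G)$, which we have already established in Theorem~\ref{thm:M_G}, together with a graded Nakayama argument governed by the positivity of the weight $\lambda$.

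First I would record the structure of the complex. Being the $w$-homogenization of $\mcF_1(G)$, the sequence $\mcF_t(G)$ is automatically a complex of free $R[t]$-modules, and every nonzero entry of its differentials is a signed product of a power of $t$ with a nonconstant monomial $m_{\mfc}(f)$; in particular each entry lies in the graded maximal ideal $\mathfrak{m} = \langle x_1,\dots,x_n,t\rangle$. To identify $H_0 = \operatorname{coker}(\delta_{t,1})$ with $R[t]/I_G^t$, I would invoke Lemma~\ref{lem:I_G complex}: the binomials $\bfx^{S\to\overline{S}} - \bfx^{\overline{S}\to S}$, with both $S$ and $\overline{S}$ connected, form a Gr\"obner basis of $I_G$ with respect to $w$, and the entries of $\delta_{t,1}$ are precisely their $w$-homogenizations. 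Since the homogenization of a Gr\"obner basis generates the homogenized ideal, these entries generate $I_G^t$, so $\operatorname{coker}(\delta_{t,1}) = R[t]/I_G^t$.

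For the exactness in degrees $\ge 1$, I would exploit that $t$ is a nonzerodivisor on each free module $F_{t,k}$, which yields a short exact sequence of complexes
\[
0 \longrightarrow \mcF_t(G) \xrightarrow{\;\cdot t\;} \mcF_t(G) \longrightarrow \mcF_t(G)/t\,\mcF_t(G) \longrightarrow 0,
\]
whose right-hand term is $\mcF_0(G)$ by Lemma~\ref{lem:0fiber}. Passing to the long exact sequence in homology and using that $\mcF_0(G)$ is exact in positive degrees (Theorem~\ref{thm:M_G}), I obtain for every $i \ge 1$ an exact segment $0 = H_{i+1}(\mcF_0) \to H_i(\mcF_t) \xrightarrow{\;t\;} H_i(\mcF_t) \to H_i(\mcF_0) = 0$, so multiplication by $t$ is an isomorphism, in particular surjective, on $H_i(\mcF_t(G))$. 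Now the condition $\lambda_i > 0$ makes the homogenization ($w$-)grading a positive $\bbN$-grading with degree-zero part $\bbK$, and $H_i(\mcF_t(G))$ is a finitely generated graded $R[t]$-module in which $t$ has strictly positive degree. Comparing the lowest nonzero graded piece with its image under the degree-raising surjection $\cdot t$ then forces $H_i(\mcF_t(G)) = 0$; equivalently, $\mathfrak{m}\cdot H_i = H_i$ gives $H_i = 0$ by graded Nakayama. Hence $\mcF_t(G)$ resolves $R[t]/I_G^t$, and minimality is immediate from the first paragraph, since all differential entries lie in $\mathfrak{m}$.

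The main obstacle is precisely this passage from the exactness of the single special fiber $\mcF_0(G)$ to the exactness of the whole family $\mcF_t(G)$. The mechanism above---realizing the degeneration as multiplication by $t$, extracting surjectivity of $\cdot t$ on homology, and then invoking graded Nakayama---is the explicit form of the ``well-known properties of flat families'' alluded to in the overview. The one point demanding care is checking that the homogenization grading is genuinely positive (guaranteed by the choice $\lambda_i > 0$), since it is this positivity, and not mere surjectivity of $\cdot t$, that annihilates the higher homology; I would also verify that the graded pieces of $H_i(\mcF_t(G))$ are bounded below, which holds because $R[t]$ is positively graded and the free modules $F_{t,k}$ are generated in fixed degrees.
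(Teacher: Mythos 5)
Your proof is correct and takes essentially the same route as the paper: identify the fiber at $(t)$ with $\mcF_0(G)$ via Lemma~\ref{lem:0fiber}, deduce that multiplication by $t$ is surjective on each higher homology module of $\mcF_t(G)$, kill that homology by Nakayama, identify the cokernel of $\delta_{t,1}$ with $R[t]/I_G^t$ by homogenizing the Gr\"obner basis, and read off minimality from the differential entries. The only cosmetic difference is that you obtain surjectivity of $\cdot t$ from the long exact sequence of $0 \to \mcF_t(G) \xrightarrow{t} \mcF_t(G) \to \mcF_0(G) \to 0$ where the paper tensors $0 \to \bbK[t] \xrightarrow{t} \bbK[t] \to \bbK[t]/(t) \to 0$ with $H_k(\mcF_t(G))$; your explicit appeal to the positive grading (from $\lambda_i > 0$) when invoking graded Nakayama is, if anything, more careful than the paper's bare citation of Nakayama's lemma.
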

\begin{proof}
We have already noted that $\mcF_t(G)$ is a complex. We now show that it is
exact.

Observe that $H_k(\mcF_t(G))\otimes R[t]/(t)$ includes into
$H_k(\mcF_t(G)\otimes R[t]/(t)) = H_k(\mcF_0(G))$. Since the latter is trivial
by Lemma~\ref{lem:0fiber} and Theorem~\ref{thm:M_G}, so is the former.
Consider the short exact sequence
\[
0 \rightarrow \bbK[t] \xrightarrow{t} \bbK[t] \rightarrow \bbK[t]/(t)
\rightarrow 0,
\]
and tensor it over $R[t]$ with $H_k(\mcF_t(G))$. Since $-\otimes H_k(\mcF_t(G))$ is
right-exact, it follows that multiplication by $t$ is a surjection from
$H_k(\mcF_t(G))$ onto itself. By Nakayama's lemma, it follows that
$H_k(\mcF_t(G))$ is trivial, i.\,e., $\mcF_t(G)$ is exact.

As we observed in the proof of Lemma~\ref{lem:0fiber}, firing any set of
vertices not including $n$ produces a divisor with smaller weight with respect
to $\lambda$. It then follows from~\cite[Theorem 14]{CorRosSal02} that the set
\[
\Gamma = \{\delta_{t,1}(\mfc) : \mfc
\textrm{ a chip-firing equivalence class of acyclic 2-partitions}\}
\]
is a Gr\"obner basis for $I_G$ under a monomial order respecting the weight
function $w$. Since the homogenization of a Gr\"obner basis is a Gr\"obner
basis for the homogenization of an ideal, it follows that the homogenization of
$I_G$ is the image of $\delta_{t,1}$, and so $\mcF_t(G)$ is a free resolution
for $R/I_G^t$. Finally, we note that $\mcF_t(G)$ is minimal since the image of
$\delta_{t,k}$ is contained in $\langle x_1,\ldots,x_n\rangle \cdot
F_{t,k-1}$ for all $k$ from 1 to $n-1$.
\end{proof}

We now arrive at the crucial property of Gr\"obner degeneration, from which the
exactness of $\mcF_1(G)$ will follow.
\begin{proposition}[{\cite[Proposition 8.26]{MilStu05}}]\label{prop:flat module}
Let $M$ be a graded submodule of $F_{1,k}(G)$ for some integer $k$, and denote by
$\tilde{M}$ its homogenization with respect to the weight function $w$. Then
$F_{t,k}(G)/\tilde{M}$ is free as a $\bbK[t]$-module.
\end{proposition}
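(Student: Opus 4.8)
The plan is to exploit that $\bbK[t]$ is a principal ideal domain equipped with a positive grading in which $t$ is homogeneous, so that freeness of $N := F_{t,k}(G)/\widetilde{M}$ reduces to the single fact that $t$ is a nonzerodivisor on $N$. Granting this, I would argue as follows. Because the weight function $w$ has $\lambda_i > 0$ and gives $t$ positive degree, the weight grading on $F_{t,k}(G)$ is positive: in each degree only finitely many monomials $\bfx^u t^b \widetilde{e}_{\mfc}$ occur, and none occur below the weights of the finitely many generators $\widetilde{e}_{\mfc}$. Hence each graded piece of $N$ is a finite-dimensional $\bbK$-vector space and the grading is bounded below, so graded Nakayama applies with respect to the graded maximal ideal $(t) \subset \bbK[t]$. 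Thus any homogeneous lift of a $\bbK$-basis of $N/tN$ is a minimal homogeneous generating set for $N$; and since $t$ is a nonzerodivisor, a least-$t$-order argument shows these generators satisfy no relations, so they form a free $\bbK[t]$-basis.

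Everything therefore rests on showing that $t$ is a nonzerodivisor on $N$, equivalently that $\widetilde{M}$ is $t$-saturated in $F_{t,k}(G)$. Here I would use the termwise description of $w$-homogenization. For $h \in F_{1,k}(G)$, its homogenization $\widetilde{h} \in F_{t,k}(G)$ is the unique element that is homogeneous for the weight grading, is not divisible by $t$, and satisfies $\widetilde{h}\,|_{t=1} = h$; concretely one multiplies each term of $h$ by the power of $t$ needed to raise its weight to the top weight occurring in $h$. By definition $\widetilde{M}$ is the $R[t]$-submodule generated by $\{\widetilde{h} : h \in M\}$, and dehomogenization (setting $t=1$) carries $\widetilde{M}$ onto $M$.

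Now suppose $g \in F_{t,k}(G)$ with $tg \in \widetilde{M}$. Since $\widetilde{M}$ is weight-homogeneous I may assume $g$ is homogeneous, and write $g = t^a g'$ with $t \nmid g'$. Dehomogenizing the membership $tg \in \widetilde{M}$ gives $g|_{t=1} = g'|_{t=1} \in M$. But $g'$ is homogeneous, not divisible by $t$, and restricts to $g'|_{t=1}$, so by the uniqueness of homogenization $g' = \widetilde{g'|_{t=1}} \in \widetilde{M}$; hence $g = t^a g' \in \widetilde{M}$. This shows $tg \in \widetilde{M} \Rightarrow g \in \widetilde{M}$, i.e.\ $t$ is a nonzerodivisor on $N$, and as a byproduct identifies the special fibre $N/tN$ with $F_{0,k}(G)/\mathrm{in}_w(M)$.

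The step I expect to require the most care is not the saturation argument, which is formal once homogenization is set up termwise, but the bookkeeping of the grading. The complex $\mcF_t(G)$ is graded by $(\bbZ^n/\im_{\bbZ^n}\Lambda_G)\times\bbZ$, whose first factor is not positively graded; so I must isolate the genuine weight ($\lambda$-)grading, refined by the degree of $t$, verify that $t$ is homogeneous and of positive degree for it, and check that this grading alone already forces the finite-dimensional, bounded-below graded pieces needed to invoke graded Nakayama. The fact that $M$ is a submodule of a free module rather than an ideal introduces no new difficulty beyond defining $\widetilde{h}$ termwise on each basis component $\widetilde{e}_{\mfc}$.
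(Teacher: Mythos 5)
Your proof is correct, but note that the paper itself offers no argument for this statement: it is imported verbatim as \cite[Proposition 8.26]{MilStu05}, so there is no internal proof to compare against. Your argument --- reduce freeness over $\bbK[t]$ to $t$ being a nonzerodivisor on $F_{t,k}(G)/\tilde{M}$ via the positively graded, degreewise finite-dimensional structure (which is exactly what the paper's hypothesis $\lambda_i>0$ guarantees), and then prove $t$-saturation by dehomogenizing $tg\in\tilde{M}$ and re-homogenizing using that $\tilde{M}$ is generated by the homogenizations of \emph{all} elements of $M$, not just of a generating set --- is essentially the standard proof given in the cited source, correctly adapted to the free-module setting and to the $w$-grading here.
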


We are finally able to prove Theorem~\ref{thm:I_G}.
\begin{proof}[{Proof of Theorem~\ref{thm:I_G}}]
By Lemma~\ref{lem:I_G complex}, we know that $\mcF_1(G)$ is a complex such that
the cokernel of $\delta_{1,1}$ is $R/I_G$. Since $t-1$ is not a zero-divisor
for $R/I_G^t$ by Proposition~\ref{prop:flat module}, the exactness of
$\mcF_1(G)$ follows from the exactness of $\mcF_t(G)$ (see~\cite[Proposition
8.28]{MilStu05}). As with $\mcF_t(G)$ and $\mcF_0(G)$, the minimality of
$\mcF_1(G)$ is clear: the image of $\delta_{1,k}$ is contained in $\langle
x_1,\ldots,x_n\rangle \cdot F_{1,k-1}$ for all $k$ from 1 to $n-1$.

\end{proof}

\begin{corollary}
The minimal free resolution of $M_G$ is a Gr\"obner degeneration of the minimal free resolution of $I_G$.
\end{corollary}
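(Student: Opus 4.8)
The plan is to assemble the results already established in Section~\ref{F1exact_sect}, since the corollary is essentially a bookkeeping statement recording the two fibers of the single flat family $\mcF_t(G)$. First I would recall that $\mcF_1(G)$ is the minimal free resolution of $R/I_G$ by Theorem~\ref{thm:I_G}, so that the object being degenerated is genuinely the minimal free resolution of the toppling ideal. By construction, the Gr\"obner degeneration of $\mcF_1(G)$ with respect to the integral weight function $w$ is the complex $\mcF_t(G)$ of $R[t]$-modules defined by Equation~\eqref{eq:delta^t}; its general fiber over $(t-1)$ recovers $\mcF_1(G)$.

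Next I would identify the special fiber of this degeneration. By definition, the Gr\"obner degeneration of the resolution is obtained by passing to the fiber over $(t)$, i.\,e., by forming $\mcF_t(G)/t\mcF_t(G)$. By Lemma~\ref{lem:0fiber}, this special fiber is isomorphic as an $R$-complex to $\mcF_0(G)$, and by Theorem~\ref{thm:M_G} the complex $\mcF_0(G)$ is precisely the minimal free resolution of $R/M_G$. Hence the special fiber of the degenerated complex is exactly the minimal free resolution of $M_G$, which is what the corollary asserts.

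The only point requiring care is that the family should be a genuine (flat) degeneration rather than an arbitrary specialization, so that the two ideals $I_G$ and $M_G$ are legitimately connected through $\mcF_t(G)$. This flatness is supplied by Proposition~\ref{prop:flat module}, which shows that $F_{t,k}(G)/\tilde{M}$ is free, hence flat, over $\bbK[t]$; combined with Theorem~\ref{thm:I_G^t}, which shows $\mcF_t(G)$ is itself the minimal free resolution of $R/I_G^t$, this certifies that $\mcF_t(G)$ is a flat family of minimal free resolutions whose general fiber is $\mcF_1(G)$ and whose special fiber is $\mcF_0(G)$. I expect no serious obstacle here, since all of the substantive content lives in the earlier lemmas and theorems; the corollary simply records that the single flat complex $\mcF_t(G)$ simultaneously resolves the homogenization $I_G^t$ over $\bbK[t]$ and specializes, at its two distinguished fibers, to the minimal free resolutions of $I_G$ and $M_G$ respectively.
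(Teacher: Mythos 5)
Your proposal is correct and matches the paper's own (implicit) justification: the corollary is recorded without separate proof precisely because it follows by combining Theorem~\ref{thm:I_G}, the construction of $\mcF_t(G)$ as the homogenization of $\mcF_1(G)$ with respect to $w$, Lemma~\ref{lem:0fiber} identifying the fiber over $(t)$ with $\mcF_0(G)$, Theorem~\ref{thm:M_G}, and the flatness supplied by Proposition~\ref{prop:flat module} together with Theorem~\ref{thm:I_G^t}. Your assembly of these ingredients, including the care taken to certify flatness rather than mere specialization, is exactly the intended argument.
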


\begin{remark}
If we homogenize the minimal free resolution of an arbitrary
$\mathbb{Z}$-graded ideal $I$ then the fiber of the resulting complex at $0$
will in general not be exact. Instead, the well known upper semicontinuity
holds \cite[Theorem 8.29]{MilStu05}: the Betti numbers of the ideal $I$ is at
most the corresponding Betti numbers of any initial ideal of $I$.
\end{remark}

\section{CW complex supporting $\mcF_0(G)$}\label{cwcomp_sect}

A free resolution of a monomial ideal is supported by a CW complex if the
differentials of the free resolution are given by an appropriate modification
of the differentials of the CW complex (see \cite{MilStu05}). In general,
minimal free resolutions of monomial ideals are not supported by CW complexes
\cite{Vel08}. However, in this section we will show that $\mcF_0(G)$ is
supported on a CW complex with a fairly simple structure.

\begin{theorem}\label{thm:cellular}
The complex $\mcF_0(G)$ is a cellular resolution, i.\,e., it is supported on
a CW complex.
\end{theorem}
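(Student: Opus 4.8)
The plan is to exhibit an explicit CW complex $X$ whose cells are indexed by the $n$-acyclic partitions of $G$ (so that the $k$-cells correspond to the $n$-acyclic $(k+1)$-partitions, i.e.\ to the basis elements of $F_{0,k}$), with the face relations given by edge contraction. Concretely, to each $n$-acyclic $(k+1)$-partition $\mcC$ I attach a $k$-cell, and I declare the faces of this cell to be the cells indexed by those $n$-acyclic $k$-partitions $\mcC/e$ obtained by contracting a contractible edge $e$ of $\mcC$. The single $0$-cell corresponds to the unique $n$-acyclic $1$-partition (the whole vertex set), matching $F_{0,0}=R$. The task is then to check that this incidence data genuinely defines a regular (or at least a legitimate) CW complex, and that the algebraic differential $\delta_{0,k}$ of $\mcF_0(G)$ is recovered from the cellular boundary map by the standard monomial labeling prescription of Bayer--Sturmfels/Miller--Sturmfels: each cell $\mcC$ carries the monomial label $\bfx^{D(\mcC)}$, and the induced complex weights the cellular boundary $[\mcC:\mcC/e]=\pm1$ by the monomial $m_{\mcC}(e)=\bfx^{D(\mcC)-D(\mcC/e)}$.

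\emph{First} I would verify that the proposed face poset is that of a bona fide CW complex. The key structural fact is that contraction is the right ``face'' operation: by Lemma~\ref{mutually contractible}, two successive contractions can be performed in either order and commute, so the set of edges contracted to pass from $\mcC$ down to a given coarser partition is well-defined; this gives the poset the local structure of a Boolean lattice, which is exactly what is needed to realize each cell as a polytope (indeed a product of simplices, since the contractible edges incident on distinct merges behave independently) and to glue them into a regular CW complex. The sign functions $\sign_{\mcC}$, which by Proposition~\ref{prop:sign exist} satisfy the cocycle-type relation~\eqref{eq:sign prop1}, provide precisely a consistent orientation of the cells so that the cellular incidence numbers $[\mcC:\mcC']$ agree with $\sign_{\mcC}(e)$; here I am using that for $\mcF_0$ only~\eqref{eq:sign prop1} is required (the identity~\eqref{eq:sign prop2} being irrelevant since each edge appears in a single orientation in an $n$-acyclic partition).

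\emph{Next} I would invoke the standard criterion: a labeled CW complex $X$ supports a (cellular) free resolution of the monomial ideal generated by the labels of its vertices precisely when, for every monomial degree $\mathbf{a}$, the subcomplex $X_{\preceq \mathbf{a}}$ of cells whose labels divide $\bfx^{\mathbf{a}}$ is acyclic over $\bbK$ (see \cite[Chapter~4]{MilStu05}). But I have \emph{already} proved in Section~\ref{F0exact_sect} that $\mcF_0(G)$ is an exact complex resolving $R/M_G$, and that the monomial labels $\bfx^{D(\mcC)}$ of the $0$-cells' cofaces generate $M_G$ with the edge-weights $m_{\mcC}(e)$ as differentials. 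Thus the content of the theorem is \emph{not} to reprove exactness but to certify that the combinatorial data $(X,\text{labels})$ I wrote down is an actual CW complex computing $\mcF_0(G)$; once that is in place, Theorem~\ref{thm:M_G} supplies the acyclicity of the $\bbK$-chain complex $\widetilde{C}_\bullet(X;\bbK)$ automatically, since the reduced cellular chain complex is obtained from $\mcF_0(G)$ by setting all variables to $1$, and the maximal-star decomposition of Lemma~\ref{vectcomex_theo} can be read directly as the statement that each degree-$\mathbf{a}$ subcomplex decomposes as a direct sum of (reduced chains of) stars, each contractible.

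\emph{The main obstacle}, and where the real work lies, is the first step: showing that the face poset actually arises from a CW complex rather than merely from an abstract graded poset. The commutativity of contractions (Lemma~\ref{mutually contractible}) gives the diamond/interval structure, but I must rule out degeneracies—e.g.\ two distinct $k$-cells sharing all the same facets, or a ``cell'' whose boundary is not a sphere—so that regularity holds and the attaching maps can be chosen consistently. I expect to handle this by realizing each cell concretely: the $n$-acyclic $(k+1)$-partition $\mcC$ together with its set of contractible $j$-edges organizes its facets into groups indexed by the vertices being merged, and each group is linearly ordered, so the cell is naturally a product of simplices whose facet lattice matches the contraction poset exactly. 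Verifying that these product-of-simplices cells glue along shared facets into a regular CW complex, and that the orientations furnished by $\sign_{\mcC}$ are globally consistent, is the crux; after that the resolution property is immediate from the already-established exactness of $\mcF_0(G)$.
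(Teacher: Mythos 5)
Your overall strategy is the same as the paper's: build a complex $\Part(G)$ whose cells are indexed by the $n$-acyclic partitions with face relations given by edge contraction, label $e_\mcC$ by $\bfx^{D(\mcC)}$, match signs via Property~\eqref{eq:sign prop1}, and invoke the already-proved exactness (Theorem~\ref{thm:M_G}) rather than re-verifying the subcomplex-acyclicity criterion. But your treatment of what you yourself identify as the crux --- that this incidence data is an honest regular CW complex --- rests on a false premise. You claim that Lemma~\ref{mutually contractible} makes the face poset locally Boolean and that the contractible edges ``behave independently,'' so that each cell is a product of simplices. Contractions do \emph{not} behave independently: contracting one edge can create directed paths that destroy the contractibility of another (in the paper's language, an edge $(u,v)$ can be legally contracted only if every directed path from $u$ to $v$ contains it). Lemma~\ref{mutually contractible} gives only the diamond property for length-two intervals, which is far weaker than what you need; a graded poset with diamond intervals need not be the face poset of any regular CW complex, and nothing in your sketch produces the homeomorphism $\partial e_{\mcC} \cong S^{k-1}$ required to attach the cells. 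This is precisely the content of the paper's Proposition~\ref{prop:cw defined}, which is proved by a double induction (on the number of vertices and on the number of edges of $G$), introducing mutually contractible edge sets $A$, analyzing when the subcomplexes $X_A$ are hollow simplices versus disks, and deleting edges that cannot be legally contracted. Your product-of-simplices heuristic recovers none of this; note that already for the kite graph the cells are not simplices (the first column of $\delta_{0,3}$ in Example~\ref{MG_ex} has four nonzero entries, so that $2$-cell is a quadrilateral), and your ``groups indexed by the vertices being merged'' picture is really the $j$-star decomposition of Section~\ref{subsec:max_star}, which is valid only after localizing at $P_j$ and killing the other variables, not for the global face structure.

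There is also a concrete bookkeeping error that breaks the identification with $\mcF_0(G)$ at the very first differential. You assign a $k$-cell to each $n$-acyclic $(k+1)$-partition, with a single $0$-cell for the $1$-partition ``matching $F_{0,0}=R$.'' Under that convention each acyclic $2$-partition $\mcC$ is a $1$-cell whose only face is that single $0$-cell, i.e.\ a loop, and the cellular boundary of a loop is zero --- it cannot reproduce $\delta_{0,1}(\mcC) = \pm\,\bfx^{D(\mcC)}\cdot(\text{$1$-partition})$ with unit coefficient. In the Bayer--Sturmfels formalism the minimal generators of $M_G$ must label the \emph{vertices}, and $F_{0,0}=R$ corresponds to the empty face; accordingly the paper takes the $k$-cells of $\Part(G)$ to be the $n$-acyclic $(k+2)$-partitions. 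This off-by-one is fixable, but as written your labeled complex does not support $\mcF_0(G)$; and even after fixing it, the sphere-boundary argument of Proposition~\ref{prop:cw defined} --- the real work of this section --- is missing from your proposal.
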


The CW complex supporting $\mcF_0(G)$ has $k$-cells corresponding to the
$n$-acyclic $(k+2)$-partitions of $G$ and has the same poset structure as the
$n$-acyclic partitions under the refinement ordering. In the remainder of this
section, we show that this CW complex is well-defined, and then conclude that
Theorem~\ref{thm:cellular} follows.

Given a graph $G$ on $[n]$, we recursively define an associated cell complex
$\Part(G)$ as follows. We introduce a $0$-cell $e_{\mcC}$ for each $n$-acyclic 
$2$-partition $\mcC$. When the $(k-1)$-skeleton of $\Part(G)$ is
defined, and $\mcC$ is an $n$-acyclic $(k+2)$-partition, define
\[
\partial e_{\mcC} = \cup_{f} \overline{e_{\mcC/f}}.
\]
where the union is taken over all contractible edges $f$ of $\mcC$.
Proposition \ref{prop:cw defined} below shows that $\partial e_{\mcC} \cong
S^{k-1}$.  We introduce a $k$-cell $e_{\mcC}$ for each $n$-acyclic
$(k+2)$-partition $\mcC$ by gluing it to $\partial e_{\mcC}$ along a
homeomorphism with the sphere. Thus, the closure of each $k$-cell in $\Part(G)$
is homeomorphic to the $k$-disk $D^k$.

For any $\overline{e_\mcC}, \overline{e_{\mcC'}} \subset \Part(G)$, we have
$\overline{e_\mcC} \cap \overline{e_{\mcC'}} = \overline{e_{[\mcC,\mcC']}}$, where
$[\mcC,\mcC']$ is the finest $n$-acyclic partition refined by both $\mcC$ and
$\mcC'$. This oriented partition $[\mcC,\mcC']$ is well-defined, and is given
by taking the finest partition refined by $\Pi(\mcC)$ and $\Pi(\mcC')$,
contracting all edges on which the orientations of $\mcC$ and $\mcC'$ do not
agree, and then iteratively contracting all cycles among the remaining oriented
edges.

\begin{definition}
If $\mcC$ is an $n$-acyclic $k$-partition of $G$, and $A$ is a set of edges
appearing in $\mcC$, we say that $A$ is mutually contractible if any subset of
$A$ can be contracted without creating any cycles.
\end{definition}

\begin{proposition}\label{prop:cw defined}
Let $k > 0$. For any $n$-acyclic $(k+2)$-partition $\mcC$, the subcomplex
$\partial e_{\mcC}$ of $\Part(G)$ is homeomorphic to $S^{k-1}$.
\end{proposition}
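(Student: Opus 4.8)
The plan is to identify the closed cell $\overline{e_\mcC}$ with the face structure of an honest convex polytope arising from the graphical hyperplane arrangement of the partition graph $H = G_{\Pi(\mcC)}$, and then simply read off that its boundary is a sphere. First I would record the combinatorial bookkeeping: the cells contained in $\overline{e_\mcC}$ are exactly the $e_{\mathcal{D}}$ for $n$-acyclic partitions $\mathcal{D}$ obtained from $\mcC$ by contracting a mutually contractible set of edges, with $\dim e_{\mathcal{D}} = |\Pi(\mathcal{D})| - 2$. A small but important point to verify here is that contracting a contractible edge preserves the property of having a unique sink at the block containing $n$ (doing it one edge at a time, merging $e^-$ and $e^+$ neither destroys the old sink nor creates a new one), so every acyclic coarsening of $\mcC$ is automatically $n$-acyclic. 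Consequently there is no discrepancy between ``acyclic coarsening of $\mcC$'' and ``$n$-acyclic partition refined by $\mcC$.''

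Second, I would introduce the graphical arrangement in $\mathbb{R}^{V(H)}$ with hyperplanes $\{x_u = x_v\}$ over the edges $uv$ of $H$, and consider the closed polyhedral cone
\[
\overline{R_\mcC} = \{\, x : x_u \le x_v \text{ whenever } u \to v \text{ in } \mcC \,\}.
\]
Because $H$ is connected, this cone has lineality space exactly $\mathbb{R}\cdot\mathbf{1}$, so modulo $\mathbf{1}$ it is a pointed cone of dimension $|V(H)| - 1 = k+1$. The key claim is that its faces are in inclusion-preserving bijection with the $n$-acyclic partitions refined by $\mcC$: a face is cut out by promoting a subset of the defining inequalities to equalities, the forced equalities generate the blocks of a partition $\mathcal{D}$, and nonemptiness of the face is exactly the condition that the quotient orientation is acyclic, i.e.\ that the contracted edges are mutually contractible. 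The forcing of extra equalities through chains $x_a \le x_b \le x_c = x_a$ corresponds precisely to the fact that contracting a non-convex edge set would create a cycle, so the two notions match on the nose. Under this bijection a face with $p$ blocks has dimension $p - 1$ modulo lineality, which matches $\dim e_{\mathcal{D}} = p - 2$ after one further slice.

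Third, I would slice the pointed cone by a hyperplane transversal to all its rays to obtain a genuine convex polytope $Q_\mcC$ of dimension $k$ (the full cone corresponds to $\mcC$ itself, the $(k+2)$-partition). By the bijection above, the proper faces of $Q_\mcC$---its vertices being the $2$-partitions and its facets the $(k+1)$-partitions---are order-isomorphic, with matching dimensions, to the cells of $\partial e_\mcC$. Since the boundary complex of a convex polytope is a regular CW decomposition of a sphere, $\partial Q_\mcC \cong S^{k-1}$.

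Finally, to transfer this from $Q_\mcC$ back to the complex built recursively inside $\Part(G)$, I would invoke that a finite regular CW complex is determined up to homeomorphism by its face poset. A short induction on $k$ is convenient here: for every proper coarsening the proposition already gives that $\overline{e_{\mathcal{D}}}$ is a ball attached along a homeomorphism of its boundary sphere, so $\partial e_\mcC$ is a regular CW complex; it has the same face poset as $\partial Q_\mcC$, whence $\partial e_\mcC \cong S^{k-1}$. The same polytope model simultaneously yields the intersection formula $\overline{e_\mcC}\cap\overline{e_{\mcC'}} = \overline{e_{[\mcC,\mcC']}}$, since intersections of faces are faces. The main obstacle I anticipate is the second step: verifying carefully, in both directions, that the faces of $\overline{R_\mcC}$ correspond exactly to mutually contractible edge sets with the correct dimensions---in particular that the equalities forced by the strict inequalities defining a face reproduce precisely the blocks created by edge contraction.
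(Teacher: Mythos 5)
Your proposal is correct, but it proves the proposition by a genuinely different route than the paper. The paper argues by a double induction on the number of vertices and edges of $G$: it builds the subcomplexes $X_A = \cup_{e\in A} D_{\{e\}}$ for mutually contractible edge sets $A$ (showing they are hollow simplices or disks), deletes non-contractible out-edges of a carefully chosen vertex $u$ to reduce to a graph with fewer edges, and finishes with an explicit homeomorphism $\partial e_{\mcC}\cong\partial e_{\mcC'}$ collapsing $X_A$ onto a single facet disk. You instead realize $\overline{e_\mcC}$ combinatorially as (a cross-section of) the closed chamber of the graphical arrangement of $G_{\Pi(\mcC)}$ determined by the orientation $\mcC$, match face posets, and invoke the standard rigidity fact that a finite regular CW complex is determined up to homeomorphism by its face poset, so that $\partial e_\mcC\cong\partial Q_\mcC\cong S^{k-1}$. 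This is sound: the chamber cone has lineality exactly $\mathbb{R}\cdot\mathbf{1}$ by connectivity, its faces are indexed by coarsenings with connected blocks and acyclic quotient, your induction legitimately supplies regularity of $\partial e_\mcC$ before the poset-rigidity step, and your observation that acyclic coarsenings of an $n$-acyclic partition are automatically $n$-acyclic is true (if the block containing $n$ had an out-edge in an acyclic quotient, projecting a directed path back to $n$ would close a cycle). Your approach buys a cleaner, more conceptual proof that yields the intersection formula $\overline{e_\mcC}\cap\overline{e_{\mcC'}}=\overline{e_{[\mcC,\mcC']}}$ and the regularity of $\Part(G)$ for free, and connects $\Part(G)$ to classical braid-arrangement combinatorics, whereas the paper's argument is elementary and self-contained, using no polytope theory or regular-CW rigidity. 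One point of care, which you partly flagged: a face with acyclic quotient need \emph{not} arise from a set that is mutually contractible in the paper's hereditary sense (for the triangle $a\to b\to c$, $a\to c$, setting $x_a=x_c$ forces the face with tight set all three edges, yet the subset $\{(a,c)\}$ alone contracts to a $2$-cycle); the correct statement is that every face equals $\mcC/S$ for \emph{some} mutually contractible $S$, and matching the face poset with the recursively defined cells of $\partial e_\mcC$ requires the (true, but not automatic) reachability lemma that every coarsening with acyclic quotient is attained by a sequence of single contractible-edge contractions, e.g., by always contracting a covering edge inside a block, noting that acyclicity of the quotient prevents directed paths between vertices of a block from leaving it.
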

\begin{proof}
The claim holds for the two connected graphs on three vertices.  We now assume
the claim is true for graphs on at most $n-1$ vertices and let $G$ have $n$
vertices. For $k+2 < n$, the claim holds since for $n$-acyclic
$(k+2)$-partitions $\mcC$, we have $\partial e_{\mcC}$ homeomorphic to the
corresponding subcomplex of $\Part(G_{\Pi(\mcC)})$, and in particular the
$(n-3)$-skeleton $Z$ of $\Part(G)$ is well-defined. Fixing an $n$-acyclic
$n$-partition $\mcC$, it remains to show that $\partial e_{\mcC} \cong S^{n-3}$.

Note that if $e_{\mcC'} \subset \partial e_{\mcC}$ is an $(n-3)$-cell, then $\mcC'$ is
obtained from $\mcC$ by contracting a unique edge. Furthermore, by the
inductive hypothesis, $\overline{e_\mcC'} \cong D^k$ for any $n$-acyclic
$(k+2)$-partition $\mcC'$ with $k+2 < n$. For any nonempty mutually contractible
set of edges $A \subset \mcC$, define $\mcC_A$ as the oriented partition given by
contracting $A$, let $D_A = \overline{e_{\mcC_A}}$, and let
\[
X_A = \cup_{e \in A}D_{\{e\}} \subset \partial e_{\mcC}
\]
Now let $A \subset \mcC$ be mutually contractible, and note that if $B
\subset A$ has $|B| = r>0$ then $D_B \cong D^{n-(r+2)}$. Furthermore, for any
nonempty $B_1,B_2 \subset A$, we have $D_{B_1} \cap D_{B_2} = D_{B_1 \cup
B_2}$.  It follows that if there exists an edge $(u,v) \in A$ for every nonsink
vertex $u$, then $X_A$ has the cell structure of a hollow simplex, so $X_A
\cong S^{n-3}$. Otherwise, $X_A \cong D^{n-3}$.

Note that if $G$ is a tree, then the set $A$ of all edges in $\mcC$ is mutually contractible, and thus
$\partial e_{\mcC} = X_A \cong S^{n-3}$ by the previous paragraph. Thus, we proceed by induction on
the number of edges of $G$.  Let $u \in V$ have at least two out-neighbors in $\mcC$
and be such that for any $v$ with a directed path to $u$, there is a unique
out-neighbor of $v$. Clearly such a vertex $u$ exists whenever $G$ is not a tree because $\mcC$ is
acyclic. An edge of the form $e = (u,v)$ can be legally contracted if and
only if every directed path from $u$ to $v$ contains $(u,v)$. If $e = (u,v)$
cannot be legally contracted, then let $\mcC'$ be the $n$-acyclic $n$-partition
of $G\setminus \{e\}$ which agrees with $\mcC$ on all edges other than $e$.
Then $\partial e_{\mcC}$ is homeomorphic to $\partial e_{\mcC} \subset \Part(G\setminus
\{e\})$. 

Thus, without loss of generality, every out-edge of $u$ can be legally
contracted. In fact the set $A$ of out-edges of $u$ is mutually
contractible, since contracting any subset of $A$ does not create any paths
between the out-neighbors of $u$. 

\vspace{0.5em}
\noindent\emph{Claim.} For any disk $\overline{e_\mcC'} \in \partial
e_{\mcC}\setminus \cup_{f \in A} e_{\mcC_f}$ with $\mcC'$ an $n$-acyclic
$(k+2)$-partition of $G$, we have $\overline{e_\mcC'} \cap X_A \cong D^{k-1}$.
\vspace{0.5em}

Let $\mcC'$ be as in the claim, and let $B \subset A$ be the collection of edges
$(u,v)$ that cannot be legally contracted from $\mcC'$. We have $\partial
e_{\mcC'}$ homeomorphic to the corresponding subcomplex of $\Part(G_\mcC')$,
which is homeomorphic to the corresponding subcomplex of
$\Part(G_\mcC'\setminus B)$. In the latter graph, $A\setminus B$ is mutually
contractible, and $X_{A\setminus B} \cong D^{k-1}$. The claim follows.

We now show that $\partial e_{\mcC} \cong S^{n-3}$ follows from the claim. Let $f \in A$,
let $G' = G\setminus (A\setminus\{f\})$, and let $\mcC'$ be the $n$-acyclic
$n$-partition of $G'$ corresponding to $\mcC$.  We have $\partial e_{\mcC'} \cong
S^{n-3}$ by the inductive hypothesis since $|A| \ge 2$. On the other hand, note
that for any $\mcA$ refined by $\mcC'$, if $f$ appears in $\mcA$ then $f$
is legally contractible in $\mcA$. This follows from the fact that $u$ has
a unique out-neighbor in $\mcC'$ and the induced subgraph on all vertices with
a path to $u$ is a tree. Thus, for any $\overline{e_\mcA} \subset
\partial e_{\mcC'}\setminus e_{\mcC'_f}$ with $\mcA$ an $n$-acyclic $(k+2)$-partition
of $G'$, we have $D_f \cap \overline{e_\mcA} \cong D^{k-1}$. On the other hand,
if $\mcA$ and $\mcB$ are refined by $\mcC$ and do not join $u$ to any of its
out neighbors, then the corresponding acyclic partitions of $G'$ are both
refined by $\mcC'$ as well, and $[\mcA,\mcB]$ is the same in $G$ as in $G'$. It
then follows from the claim that we have a homeomorphism $\partial e_{\mcC} \cong
\partial e_{\mcC'}$ given by mapping $X_A$ to $D_f$ and preserving the other cells.
\end{proof}

We remark that when $G$ is saturated, each $n$-acyclic $k$-partition $\mcC$ has
exactly $k-1$ contractible edges, which are in fact mutually contractible. In
that case, $\Part(G)$ has the cell structure of a simplicial complex, and in
fact it is the Scarf complex described by Postnikov and Shapiro in
\cite{PosSha}.

\begin{proof}[Proof of Theorem~\ref{thm:cellular}]
We show that after appropriately labeling the cells of $\Part(G)$ with
monomials in $R$, the resulting cellular complex is isomorphic to
$\mcF_0(G)$ (cf. \cite[Chapter 4]{MilStu05}).

Label each cell $e_\mcC$ of $\Part(G)$ with the monomial $\bfx^{D(\mcC)}$.  We
claim that the label of $e_\mcC$ is the least common multiple of the labels of
the cells in $\partial e_{\mcC}$, so that $\Part(G)$ is a \emph{labeled CW
complex}. Certainly $\bfx^{D(\mcC/e)}$ divides $\bfx^{D(\mcC)}$ for every
contractible edge $e$ of $\mcC$. On the other hand, if $\mcC$ is an $n$-acyclic
$k$-partition for some $k\ge 3$, then $\mcC$ then for every vertex $j$ of $G$,
there is a contractible edge $e$ of $\mcC$ such that $j \notin e^-$. Then
$(D(\mcC/e))_j = (D(\mcC))_j$, and the claim follows.  Identifying the standard
basis elements of the cellular complex for $\Part(G)$ with the standard basis
elements of $\mcF_0(G)$, we see that the cellular monomial matrices
associated with $\Part(G)$ are exactly the differential maps of
$\mcF_0(G)$, up to signs. Since the cellular monomial matrices associated with
$\Part(G)$ form a complex, if we define the sign function $\sign_{\mfc}$ of
$\mcF_0(G)$ to be the sign of the corresponding entry from the cellular complex
for $\Part(G)$, we see that $\sign_{\mfc}$ satisfies the required
Property~\eqref{eq:sign prop1}. Thus, under an appropriate sign function,
$\mcF_0(G)$ is supported on $\Part(G)$.
\end{proof}

\section{Conclusion}

In this paper, we explicitly constructed minimal free resolutions of the ideals
$M_G$ and $I_G$. The general version of this approach is to associate a
combinatorial object (such as a graph or a simplicial complex) with a graded
module and attempt to describe the minimal free resolution of the graded module in
terms of the underlying combinatorial structure. It is natural to ask whether
directed graphs are suitable for this purpose, i.\,e., whether the techniques of
this paper generalize to toppling ideals and $G$-parking function ideals of
directed graphs $G$. As we noted in the introduction, any lattice ideal coming
from a full rank submodule of the root lattice can be realized as the Laplacian
lattice ideal of a directed graph. Thus, the question of finding minimal free
resolutions of such lattice ideals is both challenging and exciting.  

\bibliography{minfree}{}
\bibliographystyle{amsplain}

\end{document}